\newtheorem{theorem}{Theorem}[section]
\newtheorem{lem}{Lemma}[section]
\newtheorem{corollary}{Corollary}[section]
\newtheorem{proposition}{Proposition}[section]
\newtheorem{remark}{Remark}[section]
\numberwithin{equation}{section}
\numberwithin{theorem}{section}
\numberwithin{remark}{section}
\def\b#1{{\bf#1}}
\def\x{{\bf X}} \def\l2{\mathbf{L}^2(\Omega)}
\def\c0{\mathbf{H}_0(\mathrm{curl},\Omega) }
\def\d0{\mathbf{H}(\mathrm{div0},\Omega) }
\def\rt{\rightarrow} \def\wt{\widetilde} 
\def\1{C_1} \def\2{C_2} \def\3{C_3} \def\4{C_4} \def\5{C_5} \def\6{C_6}
\def\7{C_7} \def\8{C_8} \def\9{C_9} \def\0{C_0}
\newcommand{\n}[1]{{\left\vert\kern-0.25ex\left\vert\kern-0.25ex\left\vert #1
    \right\vert\kern-0.25ex\right\vert\kern-0.25ex\right\vert}}
\begin{document}

\begin{frontmatter}

\title{An $hp$-version interior penalty discontinuous Galerkin method   for the  quad-curl  eigenvalue problem\thanks{Project supported by
the National Natural Science Foundation of China(Grants No. 12001130, 1871092, NSAF 193040.).}}

\author[a1,a2]{Jiayu Han}\ead{hanjiayu@csrc.ac.cn}
\author[a1,a3]{Zhimin Zhang}\ead{zmzhang@csrc.ac.cn}
\address[a1]{Beijing Computational Science Research Center, Beijing, 100193, China}
\address[a2]{School of Mathematical Sciences, Guizhou Normal University, 550025, China}
\address[a3]{Department of Mathematics, Wayne State University, Detroit, MI 48202, USA}

\begin{abstract}  {An $hp$-version interior penalty discontinuous Galerkin (IPDG) method under nonconforming meshes is proposed to solve the quad-curl eigenvalue problem. We prove well-posedness of the numerical scheme for the quad-curl equation and then derive an error estimate in a mesh-dependent norm, which is optimal with respect to $h$ but has different p-version error bounds under conforming and nonconforming tetrahedron meshes. The $hp$-version discrete compactness of the DG space is established for the convergence proof. The performance of the method is demonstrated by numerical experiments using conforming/nonconforming meshes and $h$-version/$p$-version refinement. The optimal $h$-version convergence rate and the exponential $p$-version convergence rate are observed.}
\end{abstract}

\begin{keyword}  $hp$ discontinuous Galerkin  method, quad-curl eigenvalue
 problem, error estimate, discrete compactness.
\end{keyword}
\end{frontmatter}

\section{Introduction}
The quad-curl eigenvalue problem {is  important} in inverse electromagnetic scattering theory of inhomogeneous media \cite{cakoni,monk1,sun1} and magnetohydrodynamics equations \cite{zheng}. As a classic electromagnetic model, the Maxwell eigenvalue problem is a  hot topic in the field of numerical mathematics and  computational electromagnetism (see, e.g., \cite{ainsworth,boffi2,boffi1,buffa1,buffa2,brenner,ciarlet1,hiptmair1,kikuchi,monk,monk2,reddy,russo}).
In recent years,  the numerical treatment of  the quad-curl equation and its {associated} eigenvalue problem {has} attracted the attention of scientific community. {Some early works include a nonconforming element method by Zheng et al. \cite{zheng} and  an $h$-version IPDG method by Hong et al. \cite{hong}, respectively, for the quad-curl equation}. Chen et al. \cite{chen} and Sun et al. \cite{sun2} further
established its new a-priori error estimates and multigrid method, respectively.
Sun et al. \cite{sun1} {also proposed an} $h$-version weak Galerkin method for the quad-curl equation.
  Sun \cite{sun} and Zhang \cite{zhang}   studied its mixed element methods, respectively. Brenner et al. \cite{brenner1} proposed its  Lagrange finite element methods  on planar domains. 
 More recently  Zhang and Hu et al. \cite{zhang1,hu,hu1}  proposed {several families of} $\b H(\mathrm{curl}^2)$-conforming finite {elements in both two and three dimensions. A-priori and a-posteriori error} estimates for  the quad-curl eigenvalue problem were further {developed}  in \cite{wang}. The  $\b H(\mathrm{curl}^2)$-conforming virtual element methods   \cite{zhao} and  the decoupled finite element method  \cite{cao}  were also proposed.

The $hp$   finite element methods are popular in scientific computing due to {its flexibility} and
high accuracy. DG methods  provide general framework for $hp$-adaptivity as they employ the discontinuous finite element spaces, giving great flexibility in
the design of meshes and polynomial bases.  For the overview of the historical development of $hp$-version DG methods, we
refer {to, e.g., articles \cite{arnold,baker},} monographs \cite{cockburn,pietro} and  the references therein.
 For the second-order elliptic problem and the biharmonic problem, a considerable number of works on $hp$-version IPDG methods were done, cf.
\cite{houston2002,mozolevski1,mozolevski2,suli,georgoulis1,georgoulis2,feng,karakashian,pan}. 

   However, to the best of our knowledge, the research work on the $hp$-version  DG methods for the quad-curl equation and its eigenvalue problem cannot be found in the existing literatures. This paper  aims to fill this gap. We propose an $hp$-version  IPDG method with two interior penalty parameters to solve the quad-curl equation and associated eigenvalue problem.
{The} $hp$-version  error estimates for {the} DG solution of  the quad-curl equation (without  div-free constraint) are established under the  assumption that the exact solution  $\bm w\in \b H^{3}(\Omega)$ and $\mathrm{curl}^3 \bm w\in \b H^{1/2+\delta}(\Omega)$.
  To bound the error  of the IPDG method for the  eigenvalue problem,
 {we analyze} the $hp$-version   DG discretization of  the quad-curl  equation with div-free constraint.
 {A} discrete poinc\'{a}re inequality in {the discrete div-free space is established} to guarantee the well-posedness of the DG discretization scheme.   Then {$hp$-version discrete compactness on the discrete div-free space} is   established to prove the unform convergence of discrete solution operators. {Finally, we use} the well-known Babu\u{s}ka-Osborn theory \cite{babuska}
  {to prove the convergence of the IPDG method for} the quad-curl eigenvalue problem. The  a priori error bound of the DG eigenvalues is
optimal in  $h$ on both conforming and nonconforming {meshes}, and suboptimal in $p$ by 3 order on nonconforming simplex mesh
 {and  by} 2 order on conforming simplex mesh. 

This paper is structured as follows.  In Section 2, an $hp$-version IPDG scheme is  given for the quad-curl equation without $div$-free condition. In Section
3, we will discuss the stability of the IPDG scheme and its a-priori error estimates in DG norm under  conforming/nonconforming mesh. An IPDG scheme for the quad-curl eigenvalue problem is proposed in Section 4. The discrete poinc\'{a}re inequality and discrete compactness of discrete $div$-free space are established. The error bound for IPDG eigenvalues and the error estimates of eigenfunctions in low norms will follow.
In the end of this paper, we present several numerical examples to validate the efficiency of our methods under both $h$-refinement and $p$-refinement modes.

Throughout this paper,   we use
the symbol $a \lesssim b$ and $a \gtrsim b$   to mean that $a \le Cb$ and $a \ge Cb$ respectivley, where $C$ denotes a positive constant independent of   mesh parameters and polynomial degrees
and may not be the same   in different places.

\section{An $hp$-version IPDG method for the quad-curl  problem}
Consider the quad-curl problem
\begin{align}\label{s1}
 \mathrm{curl}^4\bm w +\bm w&=\bm f~~~in~\Omega,\\
 \mathrm{curl} \bm w\times \bm n&=0 ~~~on~\partial \Omega,\label{s3}\\
 \bm w\times \bm n&=0 ~~~on~\partial \Omega,\label{s4}
\end{align}
where $\Omega$ is a bounded simply-connected Lipschitz polyhedron domain in $\mathbb{R}^d$($d=2,3$), and $\b n$ is the unit outward normal to $\partial\Omega$.

{We adopt the following function space
}
\begin{eqnarray*}
 &&\mathbf{H}(\mathrm{curl}^s,\Omega):=\{\bm v\in \mathbf L^2(\Omega): \mathrm{curl}^j \bm v \in\mathbf L^2(\Omega),1\le j\le s\}
\end{eqnarray*}
equipped with the norm $\|\cdot\|_{s,\mathrm{curl}}$
and
\begin{eqnarray*}
 &&\mathbf{H}_0(\mathrm{curl}^2,\Omega):=\{\bm v\in \mathbf L^2(\Omega): \mathrm{curl}^j \bm v \in\mathbf L^2(\Omega),\mathrm{curl}^{j-1} \bm v\times \bm n|_{\partial \Omega}=0 ,1\le j\le 2\}
\end{eqnarray*}

The weak form of (\ref{s1})-(\ref{s4}) is to find $\bm w\in \mathbf{H}_0(\mathrm{curl}^2,\Omega) $
  such that
\begin{eqnarray}\label{2.5}
a(\bm w,\mathbf{v})= ( \bm f,\mathbf{v}),~~\forall \mathbf{v}\in \mathbf{H}_0(\mathrm{curl}^2,\Omega),
\end{eqnarray}
where $$a(\bm w,\mathbf{v})=(\mathrm{curl}^2\bm w,\mathrm{curl}^2\bm{v})+(\bm w,\bm{v}).$$


We consider the
shape regular and affine meshes $\mathcal T_{h} = \{K\}$ that partition the domain $\Omega$  into {tetrahedra in $\mathbb R^3$ (or triangles in $\mathbb R^2$)}, and
 introduce the finite element space
\begin{align}
\b V_{h  } &=\{\bm v_{h  }|_ K \in P(K),   K\in\mathcal T_{h},p_K \ge 2\}.
\end{align}
where $P(K)=(P_{p_K})^d$ with the polynomial space of total $\text{degree}\le p_K$. Let $h=\max_{K\in\mathcal T_h}h_K$ and $p=\min_{K\in\mathcal T_h}p_K$. 
Let $\mathcal{E}^0_h$ and $\mathcal{E}^{\partial}_h$ be respectively the set of  internal faces and the set of boundary faces of partition $\mathcal T_{h}$, $\mathcal{E}_h:=\mathcal{E}^0_h\cup\mathcal{E}^{\partial}_h$, $f \in \mathcal{E}^0_h$ be the interface of two adjacent
elements $ K^\pm$, and $\bm n^\pm$ be the unit outward normal vector of the face $f$ associated with $ K^\pm$.  {We use $h_f:=\max(h_{K^+},h_{K^-})$ and $p_f:=\min(p_{K^+},p_{K^-})$ to represent the maximum  diameter of the circumcircle and the maximum   polynomial degree of the elements sharing the face $f$, respectively}. {We denote by $\bm v^\pm = (\bm v|_{K^\pm} )|_f$ and introduce three notations as follows:
\begin{align*}
\llbracket  \bm v\rrbracket  &=  \bm v^+\times \bm n^+ +  \bm v^-\times \bm n^-,~~\llbracket \mathrm{curl}\bm v\rrbracket  &= \mathrm{curl}\bm v^+\times \bm n^+ + \mathrm{curl}\bm v^-\times \bm n^-,~~
\{\mathrm{curl}^2\bm v\} &=
(\mathrm{curl}^2 \bm v^+ + \mathrm{curl}^2 \bm v^-)/2.
\end{align*}
If $f \in \mathcal{E}^{\partial}_h$,  we define $\llbracket \bm  v\rrbracket $, $\llbracket \mathrm{curl}\bm  v\rrbracket $ and $\{\mathrm{curl}^2\bm  v\}$ on $f$,
respectively, as follows:
\begin{align*}
\llbracket \bm v\rrbracket  = \bm v\times \bm n,~~
\llbracket \mathrm{curl}\bm v\rrbracket  = \mathrm{curl}\bm v\times \bm n,~~
\{\mathrm{curl}^2\bm v\} =
\mathrm{curl}^2 \bm v.
\end{align*}}
Pick up any $\bm v_{h  }$ in $\b V_{h  }$. For any $ K\in\mathcal T_{h}$, by using Green's formula we have
\begin{align}\label{1}
    \int_ K\bm f\cdot \bm v_{h  }&=\int_{\partial K}\mathrm{curl}^3\bm w\cdot\bm v_{h  }\times \bm nds
    +\int_ K \mathrm{curl}^2 \bm w\cdot \mathrm{curl}^2 \bm v_{h  }dx+\int_ K \bm w\cdot \bm v_{h  }dx\nonumber\\
    &~~~+\int_{\partial K}\mathrm{curl}^2\bm w\cdot (\mathrm{curl}\bm v_{h  }\times \bm n)ds.
\end{align}
This infers that
\begin{align}\label{1}
    \int_\Omega\bm f\cdot \bm v_{h  }dx&=\int_\Omega \mathrm{curl}_h^2 \bm w\cdot \mathrm{curl}_h^2 \bm v_{h  }dx+\int_{f\in \mathcal{E}_h}\mathrm{curl}^3\bm w\cdot \llbracket \bm v_{h  }\rrbracket ds
    +\int_\Omega \bm w\cdot \bm v_{h  }dx\nonumber\\
    &~~~+\int_{f\in \mathcal{E}_h}\mathrm{curl}^2\bm w\cdot \llbracket \mathrm{curl}\bm v_{h  }\rrbracket ds
\end{align}
whose right-hand side  can be rewritten as  the bilinear form
\begin{align}\label{1}
    a_h(\bm w, \bm v_{h  })&=\int_\Omega \mathrm{curl}_h^2 \bm w\cdot \mathrm{curl}_h^2 \bm v_{h  }dx+\int_\Omega \bm w\cdot \bm v_{h  }dx
    \nonumber\\
    &~~~+\int_{f\in \mathcal{E}_h}\{\mathrm{curl}^3\bm w\}\cdot \llbracket \bm v_{h  }\rrbracket ds+\int_{f\in \mathcal{E}_h}\{\mathrm{curl}^2\bm w\}\cdot \llbracket \mathrm{curl}\bm v_{h  }\rrbracket ds\nonumber\\
    &~~~+\int_{f\in \mathcal{E}_h}\{\mathrm{curl}^3\bm v_{h  }\}\cdot \llbracket \bm w\rrbracket ds+\int_{f\in \mathcal{E}_h}\{\mathrm{curl}^2\bm v_{h  }\}\cdot \llbracket \mathrm{curl}\bm w\rrbracket ds\nonumber\\
    &~~~+\int_{f\in \mathcal{E}_h}\frac{\eta_1 p^2_f}{h_f}\llbracket \mathrm{curl}\bm v_{h  }\rrbracket \cdot \llbracket \mathrm{curl} \bm w\rrbracket ds+\int_{f\in \mathcal{E}_h}\frac{\eta_2p^6_f}{h_f^3}\llbracket \bm v_{h  }\rrbracket \cdot \llbracket \bm w\rrbracket ds
\end{align}
where we have used the jump $\llbracket \mathrm{curl}\bm w\rrbracket $ and $\llbracket \bm w\rrbracket $ vanishes across   faces $f$ and $\eta_1$ and $\eta_2$ are two positive constant to be determined.
Finally we reach at the following relation
\begin{align}\label{2.9a}
    a_h(\bm w, \bm v_{h  })&=(\bm f,\bm v_{h  }),\quad\forall\bm v_{h  }\in \b V_{h  }.
\end{align}
Hence the IPDG discretization of (2.5) is to find $\bm w_{h  }\in \b V_{h  }$ such that
\begin{align}\label{2.10aa}
    a_h(\bm w_{h  }, \bm v_{h  })&=(\bm f,\bm v_{h  }),\quad\forall\bm v_{h  }\in \b V_{h  }.
\end{align}
The above two equalities give the following Galerkin orthogonality
\begin{align}\label{2.11a}
    a_h(\bm w_{h  }-\bm w, \bm v_{h  })&=0,\quad\forall\bm v_{h  }\in \b V_{h  }.
\end{align}
\begin{lem}[Lemma 4.5 in \cite{babuska1}]
There are linear continuous operators $\Pi_p^K:\b H^{s}(K)\to P(K)$ such that
for $\bm v\in \b H^{s}( K)$
\begin{align}
 \|\bm v-\Pi_p^K\bm v\|_{q, K}&\lesssim h_K^{\min(p_K+1,s)-q}p_K^{q-s}\|\bm v\|_{s, K},~0\le q\le s,
\end{align}
\end{lem}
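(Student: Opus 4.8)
The plan is the standard $hp$ scaling argument: transfer the estimate to a fixed reference simplex $\hat K$, establish there a pure $p$-version approximation bound, and re-scale in $h_K$; since $\b H^s(K)$ is a space of vector fields, the scalar construction is applied componentwise. Let $F_K(\hat{\bm x})=B_K\hat{\bm x}+\bm b_K:\hat K\to K$ be the affine bijection, so that shape regularity gives $\|B_K\|\simeq h_K$, $\|B_K^{-1}\|\simeq h_K^{-1}$, $|\det B_K|\simeq h_K^d$, and for $\bm v\in\b H^s(K)$ put $\hat{\bm v}=\bm v\circ F_K$. The chain rule yields the seminorm equivalences $|\hat{\bm v}|_{k,\hat K}\simeq h_K^{k-d/2}|\bm v|_{k,K}$ for integer $k\ge0$ (the fractional-order versions following by real interpolation of the integer endpoints). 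One then defines $\Pi_p^K\bm v:=(\hat\Pi_p\hat{\bm v})\circ F_K^{-1}$, where $\hat\Pi_p:\b H^s(\hat K)\to P(\hat K)$ is a reference operator still to be constructed; since pullback by $F_K$ is an isomorphism of the relevant Sobolev spaces, linearity and continuity of $\Pi_p^K$ are inherited from those of $\hat\Pi_p$. Both $\hat\Pi_p$ and the hidden constants may depend on $s$, but not on $h_K$ or $p$.

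The heart of the matter is the reference-element estimate: there is a linear $\hat\Pi_p:\b H^s(\hat K)\to P(\hat K)$ that reproduces every polynomial of degree $\le p$ and satisfies
\[
\|\hat{\bm v}-\hat\Pi_p\hat{\bm v}\|_{q,\hat K}\ \lesssim\ p^{\,q-t}\,|\hat{\bm v}|_{t,\hat K}\qquad\text{whenever}\quad 0\le q\le t\le\min(p+1,s).
\]
On a tensor-product reference cube this is produced by truncating the tensor Legendre (or Jacobi) expansion of $\hat{\bm v}$ at degree $p$ in each variable and reducing to the one-dimensional Jacobi-weighted approximation estimates, in which moving a single derivative onto the truncation remainder costs exactly one factor $p^{-1}$; the seminorm on the right appears because the operator fixes $P_p$, so one may first subtract an arbitrary $\chi\in P_{t-1}\subset P_p$ and then invoke the Deny--Lions lemma on $\hat K$. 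On the reference simplex the same program is run in collapsed (Duffy) coordinates with the associated Jacobi/Dubiner orthogonal basis, following a Babu\u{s}ka--Suri type construction. This simplicial case, where one must track how differentiation interacts with the collapsed-coordinate weight and control the combinatorics of the triangular truncation, carries essentially all the analytic difficulty and is the step I expect to be the main obstacle.

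Combining the two ingredients gives the mixed exponent $\mu_K:=\min(p_K+1,s)$. From the scaling, whenever $h_K\lesssim1$ (so that $h_K^{-2k}\le h_K^{-2q}$ for $k\le q$),
\[
\|\bm v-\Pi_p^K\bm v\|_{q,K}\ \lesssim\ h_K^{\,d/2-q}\,\|\hat{\bm v}-\hat\Pi_p\hat{\bm v}\|_{q,\hat K}.
\]
If $s\le p_K+1$, take $t=s$ in the reference estimate and scale the $s$-th seminorm: $\|\bm v-\Pi_p^K\bm v\|_{q,K}\lesssim h_K^{d/2-q}\,p^{q-s}\,|\hat{\bm v}|_{s,\hat K}\simeq h_K^{s-q}\,p^{q-s}\,|\bm v|_{s,K}\le h_K^{\mu_K-q}\,p^{q-s}\,\|\bm v\|_{s,K}$, since here $s=\mu_K$. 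If $s>p_K+1$, take $t=\mu_K=p_K+1$; the same computation gives $\|\bm v-\Pi_p^K\bm v\|_{q,K}\lesssim h_K^{p_K+1-q}\,p^{\,q-p_K-1}\,|\bm v|_{p_K+1,K}\le h_K^{\mu_K-q}\,p^{\,q-p_K-1}\,\|\bm v\|_{s,K}$, and since $p_K$ then ranges over the finite set $\{2,\dots,\lceil s\rceil-2\}$ the factor $p^{\,q-p_K-1}=p^{\,q-s}p^{\,s-p_K-1}$ is absorbed, up to an $s$-dependent constant, into $p^{\,q-s}$. Finally, non-integer $s$ and $q$ are handled by real (K-method) interpolation between the integer-order estimates already obtained, both the $h_K^{\mu_K-q}$ and the $p^{\,q-s}$ behaviour being preserved by the interpolation functor.
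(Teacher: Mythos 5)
The paper does not prove this lemma at all: it is imported verbatim as Lemma~4.5 of Babu\u{s}ka--Suri \cite{babuska1}, so there is no internal proof to compare against. Your outline is nevertheless the standard (and correct) argument behind that citation: affine pullback to $\hat K$ with the seminorm scalings $|\hat{\bm v}|_{k,\hat K}\simeq h_K^{k-d/2}|\bm v|_{k,K}$, a reference-element $p$-version bound $\|\hat{\bm v}-\hat\Pi_p\hat{\bm v}\|_{q,\hat K}\lesssim p^{q-t}|\hat{\bm v}|_{t,\hat K}$ with $t=\min(p_K+1,s)$, and rescaling. Two remarks. First, your handling of the case $s>p_K+1$ is right and is a detail often glossed over: there $p_K$ is confined to a finite, $s$-dependent set, so the ``wrong'' power $p^{\,q-p_K-1}$ differs from $p^{\,q-s}$ only by an $s$-dependent constant; without that observation the stated exponent $p_K^{q-s}$ would not follow. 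Second, the one genuine load-bearing step you leave unproved is exactly the reference-element estimate on the simplex, and your proposed route (Duffy coordinates with a Jacobi/Dubiner basis) is not the one in the cited source: Babu\u{s}ka and Suri instead extend $\hat{\bm v}$ by a bounded Sobolev extension to a square containing $\hat K$, truncate a tensor orthogonal expansion there, and restrict back, which sidesteps the interaction between differentiation and the collapsed-coordinate weight that makes the Dubiner route delicate for $q\ge1$. Since the whole point of quoting Lemma~4.5 is to outsource precisely that estimate, your proof is acceptable as a reduction to the cited reference-element result but should not be read as self-contained.
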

Let $\bm v\in\b V_{h  }$ be equipped with the following {norm and semi-norm}
\begin{align}\label{0}
    ||\bm v||_h^2&:=\sum_{K\in\mathcal T_{h}} \left(\|\mathrm{curl}^2 \bm v\|_{0, K}^2+\|\bm v\|_{0, K}^2\right)+\sum_{\bm f\in \mathcal{E}_h}\left(
h_f^{-3}p_f^{6}\|\llbracket \bm v\rrbracket \|_{0,f}^2 \right)\nonumber
    \\&~~~+\sum_{\bm f\in \mathcal{E}_h}\left(h_f^3p_f^{-6}\|\{\mathrm{curl}^3\bm v\}\|_{0,f}^2+h_fp_f^{-2}\|\{\mathrm{curl}^2\bm v\}\|_{0,f}^2+
    h_f^{-1}p_f^2\|\llbracket \mathrm{curl}\bm v\rrbracket \|_{0,f}^2\right),\\\label{1}
    |\bm v|_h^2&:=\|\bm v\|_h^2-\|\bm v\|^2.
\end{align}
Next we shall discuss the
well-posedness of the discrete variational problem.
It is obvious that the bilinear form $a_h(\cdot, \cdot)$ is bounded with respect to the norm $\|\cdot\|_h$ in $\b V_{h  }+\b H_0(\mathrm{curl}^2,\Omega)\cap \{\bm v:\sum_{j=0}^{3}\|\mathrm{curl}^j \bm v\|_{1/2+\sigma,\Omega}<\infty\}$ with $\sigma>0$.

\section{Error estimate in DG-norm}
\begin{lem}[Lemma 6.1 in \cite{herbert}]
For any polynomial $\bm v\in \b P(K)$, there holds
\begin{align}\label{1}
    \|\bm v\|_{0,\partial K}\lesssim h_{K}^{-1/2}p_K^{}\|\bm v\|_{0,K}~\text{and}~
    |\bm v|_{1, K}\lesssim h_{K}^{-1}p_K^{2}\|\bm v\|_{0,K}.
\end{align}
\end{lem}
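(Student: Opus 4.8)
The statement is a pair of standard $hp$-type inverse estimates on a shape-regular affine simplex, and the plan is the classical scaling argument. Since $\b P(K)=(P_{p_K})^d$ and both the boundary $L^2$-norm and the $H^1$-seminorm of a vector field decompose as Euclidean sums over their scalar components, it suffices to prove
\[
\|v\|_{0,\partial K}\lesssim h_K^{-1/2}p_K\|v\|_{0,K},\qquad |v|_{1,K}\lesssim h_K^{-1}p_K^2\|v\|_{0,K}
\]
for a scalar polynomial $v\in P_{p_K}(K)$, and then sum over the $d$ components.

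First I would fix a reference simplex $\widehat K$ and the affine bijection $F_K:\widehat K\to K$, $F_K(\widehat x)=B_K\widehat x+b_K$. Shape regularity yields $\|B_K\|\lesssim h_K$, $\|B_K^{-1}\|\lesssim h_K^{-1}$, $|\det B_K|\sim h_K^d$, together with the corresponding relations for the induced affine maps between faces $\widehat f\subset\partial\widehat K$ and $f\subset\partial K$, whose surface Jacobian is $\sim h_K^{d-1}$. Writing $\widehat v:=v\circ F_K\in P_{p_K}(\widehat K)$, the change of variables gives $\|v\|_{0,K}\sim h_K^{d/2}\|\widehat v\|_{0,\widehat K}$, $\|v\|_{0,\partial K}\sim h_K^{(d-1)/2}\|\widehat v\|_{0,\partial\widehat K}$, and $|v|_{1,K}\lesssim h_K^{d/2-1}|\widehat v|_{1,\widehat K}$.

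The heart of the matter is the pair of reference-element bounds $\|\widehat v\|_{0,\partial\widehat K}\lesssim p_K\|\widehat v\|_{0,\widehat K}$ and $|\widehat v|_{1,\widehat K}\lesssim p_K^2\|\widehat v\|_{0,\widehat K}$, valid for all $\widehat v\in P_{p_K}(\widehat K)$. For the first I would expand $\widehat v$ in an $L^2(\widehat K)$-orthonormal basis of Dubiner/Jacobi polynomials and use the known $O(p)$ growth of the ratio of the $L^2(\partial\widehat K)$- to $L^2(\widehat K)$-norms of the basis functions (equivalently, a Warburton--Hesthaven-type bound on the norm of the trace operator $P_p(\widehat K)\to L^2(\partial\widehat K)$). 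For the second, the one-dimensional Markov inequality $\|q'\|_{L^2(I)}\lesssim p^2\|q\|_{L^2(I)}$ combined with the collapsed-coordinate/orthogonal-expansion structure on the simplex yields the $O(p^2)$ gradient inverse estimate; alternatively one simply imports it from the $hp$-FEM literature, as the paper does via \cite{herbert}. Substituting these two bounds into the scalings of the previous paragraph produces exactly $\|v\|_{0,\partial K}\lesssim h_K^{-1/2}p_K\|v\|_{0,K}$ and $|v|_{1,K}\lesssim h_K^{-1}p_K^2\|v\|_{0,K}$, and summing over the $d$ components completes the proof.

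The only genuine obstacle is the $p$-dependence on the reference element: a crude ``all norms on a finite-dimensional space are equivalent'' argument produces a constant that degenerates as $p_K\to\infty$, so one really needs the sharp $O(p)$ trace bound and the $O(p^2)$ Markov bound, which is precisely the input taken from \cite{herbert} (ultimately from the theory of Jacobi-polynomial expansions on simplices). The affine change of variables and the shape-regularity scalings are entirely routine.
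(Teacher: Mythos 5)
Your proposal is correct: the reduction to scalar components, the affine scaling to a reference simplex, and the sharp $O(p_K)$ trace and $O(p_K^2)$ Markov inverse estimates on the reference element combine with the stated powers of $h_K$ exactly as you compute. The paper itself offers no proof of this lemma — it is imported verbatim as Lemma 6.1 of \cite{herbert} — and your argument is precisely the standard one underlying that reference, with the correct identification that the only non-routine ingredient is the $p$-explicit reference-element bounds rather than a naive finite-dimensional norm equivalence.
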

The following theorem shows the  coerciveness of $a_h(\cdot,\cdot)$ in $\b V_{h  }$, which guarantees the well-posedness of the discrete problem (\ref{2.10aa}).
\begin{theorem}\label{t3.2}
For sufficiently large $\eta_1$ and $\eta_2$, we have
\begin{align}\label{1}
    a_h(\bm v,\bm v)\gtrsim\|\bm v\|_{h}^2,~\forall \bm v\in \b V_{h  }.
\end{align}
\end{theorem}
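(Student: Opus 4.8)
The plan is the standard interior‑penalty coercivity argument, made $hp$‑explicit through Lemma~3.1. Taking $\bm w=\bm v$ in the definition of $a_h$, the two symmetric pairs of consistency terms coincide, so that
\begin{align*}
a_h(\bm v,\bm v)&=\sum_{K\in\mathcal T_h}\bigl(\|\mathrm{curl}^2\bm v\|_{0,K}^2+\|\bm v\|_{0,K}^2\bigr)+\sum_{f\in\mathcal E_h}\Bigl(\tfrac{\eta_1 p_f^2}{h_f}\|\llbracket\mathrm{curl}\bm v\rrbracket\|_{0,f}^2+\tfrac{\eta_2 p_f^6}{h_f^3}\|\llbracket\bm v\rrbracket\|_{0,f}^2\Bigr)\\
&\quad+2\sum_{f\in\mathcal E_h}\int_f\{\mathrm{curl}^3\bm v\}\cdot\llbracket\bm v\rrbracket\,ds+2\sum_{f\in\mathcal E_h}\int_f\{\mathrm{curl}^2\bm v\}\cdot\llbracket\mathrm{curl}\bm v\rrbracket\,ds.
\end{align*}
The first two sums are nonnegative; the task is to absorb the two consistency sums into them for suitably large $\eta_1,\eta_2$.

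For each $f$ and each $\epsilon>0$, Young's inequality gives $2\bigl|\int_f\{\mathrm{curl}^3\bm v\}\cdot\llbracket\bm v\rrbracket\,ds\bigr|\le\epsilon\,h_f^3p_f^{-6}\|\{\mathrm{curl}^3\bm v\}\|_{0,f}^2+\epsilon^{-1}p_f^6h_f^{-3}\|\llbracket\bm v\rrbracket\|_{0,f}^2$ and, likewise, $2\bigl|\int_f\{\mathrm{curl}^2\bm v\}\cdot\llbracket\mathrm{curl}\bm v\rrbracket\,ds\bigr|\le\epsilon\,h_fp_f^{-2}\|\{\mathrm{curl}^2\bm v\}\|_{0,f}^2+\epsilon^{-1}p_f^2h_f^{-1}\|\llbracket\mathrm{curl}\bm v\rrbracket\|_{0,f}^2$. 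The $\epsilon^{-1}$‑terms are pieces of the two penalty terms. For the $\epsilon$‑terms, which is the \emph{key step}, observe that $\mathrm{curl}^2\bm v|_K$ and $\mathrm{curl}^3\bm v|_K$ are (vector) polynomials on each $K$, so Lemma~3.1 yields the trace bound $\|\mathrm{curl}^j\bm v\|_{0,\partial K}\lesssim h_K^{-1/2}p_K\|\mathrm{curl}^j\bm v\|_{0,K}$ for $j=2,3$ together with $\|\mathrm{curl}^3\bm v\|_{0,K}\lesssim|\mathrm{curl}^2\bm v|_{1,K}\lesssim h_K^{-1}p_K^2\|\mathrm{curl}^2\bm v\|_{0,K}$. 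Chaining these and using shape‑regularity together with the bounded local variation of $h_K$ and $p_K$ across a face (so $h_f\simeq h_{K^\pm}$, $p_f\simeq p_{K^\pm}$) gives $h_f^3p_f^{-6}\|\{\mathrm{curl}^3\bm v\}\|_{0,f}^2+h_fp_f^{-2}\|\{\mathrm{curl}^2\bm v\}\|_{0,f}^2\lesssim\|\mathrm{curl}^2\bm v\|_{0,K^+}^2+\|\mathrm{curl}^2\bm v\|_{0,K^-}^2$, with the simpler one‑sided statement on boundary faces. Since each element borders at most $d+1$ faces, summing over $f\in\mathcal E_h$ bounds the total $\epsilon$‑contribution by $C_0\epsilon\sum_{K}\|\mathrm{curl}^2\bm v\|_{0,K}^2$ for a fixed $C_0$.

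Now fix $\epsilon=1/(2C_0)$, so the summed $\epsilon$‑terms are $\le\tfrac12\sum_K\|\mathrm{curl}^2\bm v\|_{0,K}^2$, and then take $\eta_1,\eta_2$ large enough that $\eta_i-\epsilon^{-1}\ge\tfrac12\eta_i$, absorbing the $\epsilon^{-1}$‑penalty pieces into half of the penalty terms. This yields $a_h(\bm v,\bm v)\gtrsim\sum_K\bigl(\|\mathrm{curl}^2\bm v\|_{0,K}^2+\|\bm v\|_{0,K}^2\bigr)+\sum_f\bigl(p_f^2h_f^{-1}\|\llbracket\mathrm{curl}\bm v\rrbracket\|_{0,f}^2+p_f^6h_f^{-3}\|\llbracket\bm v\rrbracket\|_{0,f}^2\bigr)$. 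Finally, the two remaining face contributions in $\|\bm v\|_h^2$, namely $h_f^3p_f^{-6}\|\{\mathrm{curl}^3\bm v\}\|_{0,f}^2$ and $h_fp_f^{-2}\|\{\mathrm{curl}^2\bm v\}\|_{0,f}^2$, are already $\lesssim\sum_K\|\mathrm{curl}^2\bm v\|_{0,K}^2$ by the bound from the key step, hence dominated by the right‑hand side above; therefore $a_h(\bm v,\bm v)\gtrsim\|\bm v\|_h^2$.

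The only genuine obstacle I anticipate is the $hp$‑bookkeeping in the key step: one has to match the powers of $h_f$ and $p_f$ carried by the penalty weights $\eta_1p_f^2/h_f$, $\eta_2p_f^6/h_f^3$ (and by the corresponding weights in $\|\cdot\|_h$) with those generated by composing the $hp$‑explicit trace inequality (factor $h^{-1/2}p$) with the inverse estimate (factor $h^{-1}p^2$) — one extra $\mathrm{curl}$ evaluated on the boundary costs $h^{-3/2}p^3$, whose square is exactly $h^{-3}p^6$ — and to make the transfer of face quantities to the two adjacent elements run in the favourable direction, which is precisely where shape‑regularity and the bounded local variation of the mesh size and polynomial degree are used. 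Everything else is the routine three‑constant ($\epsilon$, then $\eta_1,\eta_2$) absorption performed in the order above.
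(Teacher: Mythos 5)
Your proposal is correct and follows essentially the same route as the paper: expand $a_h(\bm v,\bm v)$, absorb the two consistency sums via Young's inequality, control the resulting average terms by the elementwise $\|\mathrm{curl}^2\bm v\|_{0,K}$ contributions using the $hp$-explicit trace and inverse estimates of Lemma~3.1 (with exactly the $h^{-3}p^6$ bookkeeping you describe), and then choose the absorption parameters and $\eta_1,\eta_2$ accordingly; your closing observation that the average terms appearing in $\|\cdot\|_h^2$ are themselves dominated by the element terms is the same final step the paper uses to pass from the lower bound to the full norm.
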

\begin{proof}
Let us pick up any $\bm v\in \b V_{h  }$ we deduce
\begin{align}\label{1}
    a_h(\bm v,\bm v)&\ge\sum_{K\in\mathcal T_{h}} \left(\|\mathrm{curl}^2 \bm v\|_{0, K}^2+\|\bm v\|_{0, K}^2\right)+\sum_{\bm f\in \mathcal{E}_h}\left(\eta_1h_fp^2_f\|\llbracket \mathrm{curl}\bm v\rrbracket \|_{0,f}^2+
     \eta_2 h_f^{-3}p^6_f\|\llbracket \bm v\rrbracket \|_{0,f}^2\right)
    \nonumber\\
    &~~~-\rho_1^{-1}\sum_{\bm f\in \mathcal{E}_h}h_fp^{-2}_f\|\{\mathrm{curl}^2\bm v\}\|_{0,f}^2-\rho_1\sum_{\bm f\in \mathcal{E}_h}h_f^{-1}p^2_f\|\llbracket \mathrm{curl}\bm v\rrbracket \|_{0,f}^2\nonumber\\
    &~~~-\rho_2^{-1}\sum_{\bm f\in \mathcal{E}_h}h_f^3p^{-6}_f\|\{\mathrm{curl}^3\bm v\}\|_{0,f}^2-\rho_2\sum_{\bm f\in \mathcal{E}_h}h_f^{-3}p^6_f\|\llbracket \bm v\rrbracket \|_{0,f}^2
\end{align}
with $\rho_1$ and $\rho_2$ to be determined later.
Using  trace theorem and inverse estimate in Lemma 3.1, we have for $f= \overline{K_1}\cap \overline{K_2}$ 
\begin{align}\label{2.19}
h_fp_f^{-2}\|\{\mathrm{curl}^2\bm v\}\|_{0,f}^2&\le 0.5C_1\|\mathrm{curl}^2\bm v\|_{0, K_1\cup K_2}^2,\\
h_f^3p_f^{-6}\|\{\mathrm{curl}^3\bm v\}\|_{0,f}^2&\le C_2h_fp_f^{-2}\|\{\mathrm{curl}^2\bm v\}\|_{0,f}^2\le 0.5C_2C_1\|\mathrm{curl}^2\bm v\|_{0, K_1\cup K_2}^2.\label{2.20}
\end{align}
Then we deduce
\begin{align}\label{1}
    a_h(\bm v,\bm v)&\ge\sum_{K\in\mathcal T_{h}} \left(\|\mathrm{curl}^2 \bm v\|_{0, K}^2+\|\bm v\|_{0, K}^2\right)+\sum_{\bm f\in \mathcal{E}_h}\left(\eta_1h_f^{-1}p^2_f\|\llbracket \mathrm{curl}\bm v\rrbracket \|_{0,f}^2+
     \eta_2h_f^{-3}p^6_f \|\llbracket \bm v\rrbracket \|_{0,f}^2\right)
    \nonumber\\
    &~~~-2\rho_1^{-1}C_1\sum_{K\in\mathcal T_{h}}\|\mathrm{curl}^2\bm v\|_{0, K}^2-\rho_1\sum_{\bm f\in \mathcal{E}_h}h_f^{-1}p^2_f\|\llbracket \mathrm{curl}\bm v\rrbracket \|_{0,f}^2\nonumber\\
    &~~~-2\rho_2^{-1}C_1C_2\sum_{K\in\mathcal T_{h}}\|\mathrm{curl}^2\bm v\|_{0, K}^2-\rho_2\sum_{\bm f\in \mathcal{E}_h}h_f^{-3}p^6_f\|\llbracket \bm v\rrbracket \|_{0,f}^2.
\end{align}
Hence
\begin{align}\label{2.20}
    a_h(\bm v,\bm v)&\ge\sum_{K\in\mathcal T_{h}} \left((1-2\rho_1^{-1}C_1-2\rho_2^{-1}C_1C_2)\|\mathrm{curl}^2 \bm v\|_{0, K}^2+\|\bm v\|_{0, K}^2\right)
    \nonumber\\
    &~~~+\sum_{\bm f\in \mathcal{E}_h}\left(h_f^{-1}(\eta_1-\rho_1)p_f^2\|\llbracket \mathrm{curl}\bm v\rrbracket \|_{0,f}^2+
   (\eta_2-\rho_2)h_f^{-3}p_f^6\|\llbracket \bm v\rrbracket \|_{0,f}^2\right)
\end{align}
Choose the large $\rho_1$,   $\rho_2$,   $\eta_1$ and $\eta_2$ such that $2\rho_1^{-1}C_1+2\rho_2^{-1}C_1C_2<1$, $\eta_1>\rho_1$ and $\eta_2>\rho_2$.
Finally the inequality (\ref{2.20}) together with (\ref{2.19})  yields the conclusion.
\end{proof}

In the following theorem, we prove the error estimate of the discrete problem  (\ref{2.10aa}) by establishing the  interpolation error estimates in DG-norm.
\begin{theorem}\label{t3.3}
Assume that $\{\mathcal T_{h}\}$ is a family of  nonconforming  meshes. 
Let $\bm w\in\b H_0(\mathrm{curl}^2,\Omega)$ and $A_K(\bm w)<\infty$ for any $K\in \mathcal T_h$ then
  \begin{align}\label{2.22}
  \|\bm w- \bm w_{h  }\|_{h}&\lesssim \left(\sum_{K\in\mathcal T_{h}}(h_K^{\min(p_K+1,s_K)-2}p_K^{3.5-s_K}A_K(\bm w))^2\right)^{1/2},\quad p:={\min_{K\in\mathcal T_h}p_K}\ge2
  \end{align}
  where   $A_K(\bm w):=\|\bm w\|_{s_K, K}$ if $\|\bm w\|_{s_K, K}<\infty$ for  $s_K\ge 4$,
  otherwise $A_K(\bm w):=\|\bm w\|_{3, K}+\|\mathrm{curl}^3\bm w\|_{1/2+\delta, K}$  $(\delta>0)$ and $s_K=3$.
\end{theorem}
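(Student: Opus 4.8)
The plan is to use the standard \emph{Strang-type} argument for nonconforming discretizations. By Theorem~\ref{t3.2}, $a_h(\cdot,\cdot)$ is coercive on $\b V_{h}$ with respect to $\|\cdot\|_h$, and as noted after \eqref{1}, $a_h(\cdot,\cdot)$ is bounded on the larger space containing both $\b V_{h}$ and the exact solution. Writing $\Pi_h\bm w$ for the piecewise operator built from the $\Pi_p^K$ of Lemma~2.1 applied on each $K$, I would insert $\Pi_h\bm w$ as an intermediate term:
\[
\|\bm w-\bm w_{h}\|_h \le \|\bm w-\Pi_h\bm w\|_h + \|\Pi_h\bm w-\bm w_{h}\|_h,
\]
and control the second term by coercivity plus Galerkin orthogonality \eqref{2.11a}:
\[
\|\Pi_h\bm w-\bm w_{h}\|_h^2 \lesssim a_h(\Pi_h\bm w-\bm w_{h},\Pi_h\bm w-\bm w_{h}) = a_h(\Pi_h\bm w-\bm w,\Pi_h\bm w-\bm w_{h}) \lesssim \|\Pi_h\bm w-\bm w\|_h\,\|\Pi_h\bm w-\bm w_{h}\|_h,
\]
so everything reduces to estimating the interpolation error $\|\bm w-\Pi_h\bm w\|_h$ in the mesh-dependent norm \eqref{0}.

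The core of the proof is then a term-by-term bound of $\|\bm w-\Pi_h\bm w\|_h$ using the six pieces in the definition \eqref{0}: the volume terms $\|\mathrm{curl}^2(\bm w-\Pi_h\bm w)\|_{0,K}$ and $\|\bm w-\Pi_h\bm w\|_{0,K}$, and the four face terms weighted by the powers of $h_f$ and $p_f$. For the volume terms I would apply Lemma~2.1 directly with $q=0$ and $q=2$ (after noting $\|\mathrm{curl}^2\bm u\|_{0,K}\lesssim |\bm u|_{2,K}$). For the face terms I would first pass from faces back to elements using the trace-type inverse/scaling inequalities — the discrete trace inequality in Lemma~3.1 for the polynomial part, and a continuous (polynomial-degree-free) trace inequality of the form $\|\bm u\|_{0,\partial K}^2 \lesssim h_K^{-1}\|\bm u\|_{0,K}^2 + h_K|\bm u|_{1,K}^2$ for the remainder $\bm w-\Pi_h\bm w$ — and then invoke Lemma~2.1 again for each of $\bm u$, $\mathrm{curl}\,\bm u$, $\mathrm{curl}^2\bm u$, $\mathrm{curl}^3\bm u$ at the appropriate Sobolev order. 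Bookkeeping the powers of $h_f$ and $p_f$, one checks that every term contributes at most $h_K^{\min(p_K+1,s_K)-2}p_K^{3.5-s_K}A_K(\bm w)$, with the worst $p$-power ($p_K^{3.5-s_K}$) coming from the $\llbracket\bm v\rrbracket$ face term weighted by $h_f^{-3}p_f^6$, once combined with the $p^2$-type trace inverse constant. The split definition of $A_K(\bm w)$ (using $\|\bm w\|_{s_K,K}$ when $s_K\ge4$, and $\|\bm w\|_{3,K}+\|\mathrm{curl}^3\bm w\|_{1/2+\delta,K}$ when $s_K=3$) is exactly what is needed so that $\mathrm{curl}^3(\bm w-\Pi_h\bm w)$ has enough regularity for the fractional trace estimate in the low-regularity case.

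The main obstacle is the careful tracking of the $h$- and $p$-exponents through the nested applications of Lemma~2.1 and the two flavors of trace inequality, especially keeping the $h$-exponents exactly at $\min(p_K+1,s_K)-2$ uniformly and identifying which face term produces the dominant $p^{3.5-s_K}$ factor; a secondary subtlety is that on nonconforming meshes $h_f$ and $p_f$ on a face need not match $h_K,p_K$ of both neighbors, so one must use the shape-regularity of $\{\mathcal T_h\}$ together with the definitions $h_f=\max(h_{K^+},h_{K^-})$, $p_f=\min(p_{K^+},p_{K^-})$ to bound ratios like $h_f/h_K$ and $p_K/p_f$ by constants and redistribute the face contributions to the two adjacent elements. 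Once these exponent computations are done, summing over $K\in\mathcal T_h$ and taking square roots yields \eqref{2.22}.
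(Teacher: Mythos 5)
Your overall route is the same as the paper's: quasi-optimality from the coercivity of Theorem~\ref{t3.2} plus the Galerkin orthogonality \eqref{2.11a}, followed by a term-by-term bound of the interpolation error $\|\bm w-\Pi_h\bm w\|_h$ in the norm \eqref{0}, using Lemma~2.1 for the volume contributions and trace inequalities to pull the face contributions back to the elements. The one point where your write-up would not deliver the stated exponent is the form of the continuous trace inequality. With the additive version you quote, $\|\bm u\|_{0,\partial K}^2\lesssim h_K^{-1}\|\bm u\|_{0,K}^2+h_K|\bm u|_{1,K}^2$, the dominant face term $h_f^{-3}p_f^{6}\|\llbracket \bm w-\Pi_h\bm w\rrbracket\|_{0,f}^2$ only gives $h_K^{\min(p_K+1,s_K)-2}p_K^{4-s_K}$, since $h_K^{1/2}|\bm w-\Pi_h\bm w|_{1,K}\sim h_K^{\min-1/2}p_K^{1-s_K}$. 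The paper instead invokes the multiplicative (interpolation-type) trace inequality of Lemma A.3 in \cite{prudhomme}, $\|\bm u\|_{0,\partial K}\lesssim h_K^{-1/2}\|\bm u\|_{0,K}+(\|\bm u\|_{0,K}\,|\bm u|_{1,K})^{1/2}$, whose cross term behaves like $h_K^{\min-1/2}p_K^{1/2-s_K}$; this half power of $p$ is exactly what turns $p^{4-s_K}$ into the claimed $p^{3.5-s_K}$. Relatedly, the worst $p$-power does not come from the polynomial inverse-trace constant of Lemma~3.1 (that enters only the coercivity proof); it comes from the weight $p_f^{3}h_f^{-3/2}$ combined with the $p^{1/2}$ loss in the multiplicative trace estimate of the nonpolynomial error.

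A second, smaller point: the paper must also treat the lowest degree $p_K=2$ separately in the $\{\mathrm{curl}^3\}$ face term, because there $\mathrm{curl}^3\Pi_p^K\bm w=0$ and one bounds $\|\mathrm{curl}^3\bm w\|_{0,f}$ directly by the fractional Sobolev trace $\|\mathrm{curl}^3\bm w\|_{1/2+\delta,K}$; your remark about the split definition of $A_K(\bm w)$ shows you have the right mechanism in mind, but note it is needed for $p_K=2$ even when $s_K\ge4$, not only in the low-regularity case $s_K=3$. With the multiplicative trace inequality substituted and this case distinction made explicit, your bookkeeping plan reproduces the paper's proof.
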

\begin{proof}{Let $\Pi_{p}$ be the global projection operator  which equals $\Pi_{p}^K$ on $K$.} First of all we shall prove
\begin{align}\label{2.22sa}
  \|\bm w-\Pi_{p}\bm w\|_{h}&\lesssim \left(\sum_{K\in\mathcal T_{h}}(h_K^{\min(p_K+1,s_K)-2}p_K^{3.5-s_K}A_K(\bm w)\right)^{1/2},\quad p_K,s_K\ge2.
\end{align}
Let us denote $E_h(\bm w)=\bm w-\Pi_{p}\bm w$ then
\begin{align*}
\|E_h(\bm w)\|_{h}^2&\lesssim\sum_{K\in\mathcal T_{h}} \left(\|\mathrm{curl}^2 E_h(\bm w)\|_{0, K}^2+\| E_h(\bm w)\|_{0, K}^2\right)
    +\sum_{\bm f\in \mathcal{E}_h} h_f^3p_f^{-6}\|\{\mathrm{curl}^3 E_h(\bm w)\}\|_{0,f}^2\\
    &~~~+\sum_{\bm f\in \mathcal{E}_h}\left( p_f^6h_f^{-3}\|\llbracket E_h(\bm w)\rrbracket \|_{0,f}^2+
    p_f^2h_f^{-1}\|\llbracket \mathrm{curl}E_h(\bm w)\rrbracket \|_{0,f}^2+p_f^{-2}h_f\|\{\mathrm{curl}^2E_h(\bm w)\}\|_{0,f}^2\right)\\
    &:=I_1+I_2+I_3.
\end{align*}
We first estimate $I_1$. Using the error estimate of $\Pi^K_{p}$ in Lemma 2.1, we have
\begin{align*}
\| E_h(\bm w)\|_{0, K}+\|\mathrm{curl}^2 E_h(\bm w)\|_{0, K}\lesssim h_K^{\min(p_K+1,s_K)-2}p_K^{2-s_K}\|\bm w\|_{s_K, K}.
\end{align*}
Let $f= \overline{K}_1\cap \overline{K}_2$ and {$s_f:=\min(s_{K_1},s_{K_2})$}. For $p_f\ge3$ we have from Lemma 2.1
\begin{align*}
|\mathrm{curl}^3 E_h(\bm w)|_{1, K_1\cup K_2}+h_f^{-1}p_f|\mathrm{curl}^3 E_h(\bm w)|_{0, K_1\cup K_2}
\lesssim h_f^{\min(p_f+1,s_f)-4}p_f^{4-s_f}\|\bm w\|_{s_f, K_1\cup K_2}.
\end{align*}
Then we  estimate $I_2$:
For $p_f\ge2$ we have for $i=1,2$
\begin{align*}
\|\mathrm{curl}^3 E_h(\bm w)|_{K_i}\|_{0,f}^2=
\|\mathrm{curl}^3 \bm w|_{K_i}\|_{0,f}^2\lesssim \|\mathrm{curl}^3 \bm w\|_{1/2+\delta, K_1\cup K_2}^2
\end{align*}
while for $p_f\ge3$ from Lemma A.3 in \cite{prudhomme} we have for $i=1,2$
\begin{align*}
\|\mathrm{curl}^3 E_h(\bm w)|_{K_i}\|_{0,f}&\lesssim h_f^{-1/2}\|\mathrm{curl}^3 E_h(\bm w)\|_{0, K_1\cup K_2}+(\|\mathrm{curl}^3 E_h(\bm w)\|_{0, K_1\cup K_2}|\mathrm{curl}^3 E_h(\bm w)|_{1, K_1\cup K_2})^{1/2}\\
&\lesssim (h_f^{\min(p_f+1,s_f)-3.5}p_f^{3-s_f}+h_f^{\min(p_f+1,s_f)-3.5}p_f^{3.5-s_f})\|\bm w\|_{s_f, K_1\cup K_2}\\
&\lesssim h_f^{\min(p_f+1,s_f)-3.5}p_f^{3.5-s_f}\|\bm w\|_{s_f, K_1\cup K_2}.
\end{align*}
We can estimate $I_3$ similarly:  
\begin{align}\label{eh}
&\|E_h(\bm w)|_{K_i}\|_{0,f}+h_fp_f^{-1}\|\mathrm{curl}(E_h(\bm w))|_{K_i}\|_{0,f}+h_f^2p_f^{-2}\|\mathrm{curl}^2(E_h(\bm w))|_{K_i}\|_{0,f}\nonumber\\
&~~~\lesssim  h_f^{\min(p+1,s_f)-0.5}p_f^{0.5-s_f}\|\bm w\|_{s_f, K_1\cup K_2}\text{for $i=1,2$}.
\end{align}
Hence we have proved \eqref{2.22sa}.
Using (\ref{2.11a}) and Theorem 3.1  we have
\begin{eqnarray*}
 &&\|\bm w_{h  }-\bm v_{h  }\|_{h}^2\lesssim a_h(\bm w_{h  }-\bm v_{h  },\bm w-\bm v_{h  })\lesssim \|\bm w_{h  }-\bm v_{h  }\|_{h}\|\bm w-\bm v_{h  }\|_{h},\quad \forall\bm v_{h  }\in  {\b V}_h.
\end{eqnarray*}
Then
\begin{eqnarray}\label{2.22a}
 &&\|\bm w-\bm w_{h  }\|_{h}\lesssim \inf_{\bm v_{h  }\in  {\b V}_h}\|\bm w-\bm v_{h  }\|_h.
\end{eqnarray}
This together with \eqref{2.22sa}  yields the conclusion.
\end{proof}

\begin{remark}
The error estimate \eqref{2.22sa}   on the general polygonal $(d = 2)$ or polyhedral $(d = 3)$ meshes  can be proved by using a triangle or a tetrahedron to cover the polygonal or polyhedral element, like the way in Lemma 23 in \cite{cangiani} or Lemma 4.12 in \cite{dong}.
\end{remark}

\begin{remark}
The    convergence rate is optimal with respect to $h$ but the  convergence rate in polynomial degree $p$ is not optimal under nonconforming mesh. The convergence rate can be improved under conforming meshes. 
The utilization of the $H^1$-conforming  element interpolation in \cite{babuska1} can yield the following theorem.
\end{remark}

\begin{theorem}\label{t3}Assume that $\{\mathcal T_{h}\}$ is a family of conforming meshes. Let $\bm w\in\b H_0(\mathrm{curl}^2,\Omega)$ and $A_\Omega(\bm w):=(\sum_{K\in\mathcal T_h}A_K(\bm w)^2)^{1/2}<\infty$ with $\min_{K\in\mathcal T_h}s_K:=s$ then
\begin{align}\label{2.22s}
  ||\bm w- \bm w_{h  }||_{h}&\lesssim  h^{\min(p+1,s)-2}p^{3-s}A_{\Omega}(\bm w),\quad p\ge2.
  \end{align}
\end{theorem}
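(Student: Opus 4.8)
The plan is to reuse the proof of Theorem~\ref{t3.3} almost line for line, changing only the interpolation operator. As in the nonconforming case, the coercivity of $a_h$ on $\b V_h$ (Theorem~\ref{t3.2}), the Galerkin orthogonality \eqref{2.11a}, and the boundedness of $a_h$ in $\|\cdot\|_h$ first yield the C\'ea-type bound $\|\bm w-\bm w_h\|_h\lesssim\inf_{\bm v_h\in\b V_h}\|\bm w-\bm v_h\|_h$ exactly as in \eqref{2.22a}, so it suffices to exhibit one good $\bm v_h$. Instead of the element-wise projection $\Pi_p$ I would take $\bm v_h=\wt\Pi_p\bm w$, where $\wt\Pi_p$ is the $H^1$-conforming $hp$-interpolation operator of \cite{babuska1} applied componentwise; its key feature is that $\wt\Pi_p\bm w$ is a \emph{globally continuous} piecewise polynomial, while on each element it still obeys local $hp$-approximation bounds of the type in Lemma~2.1 (at worst with an extra $p^{1/2}$ factor and with a patch norm $\|\bm w\|_{s,\omega_K}$ on the right). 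Setting $E_h(\bm w)=\bm w-\wt\Pi_p\bm w$, I would expand $\|E_h(\bm w)\|_h^2$ into the same pieces as in the proof of Theorem~\ref{t3.3}: the element terms $\|\mathrm{curl}^2E_h(\bm w)\|_{0,K}$, $\|E_h(\bm w)\|_{0,K}$; the face averages $\{\mathrm{curl}^2E_h(\bm w)\}$, $\{\mathrm{curl}^3E_h(\bm w)\}$; and the jumps $\llbracket E_h(\bm w)\rrbracket$, $\llbracket\mathrm{curl}E_h(\bm w)\rrbracket$.

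The gain from the conforming interpolant is that $\llbracket E_h(\bm w)\rrbracket$ now vanishes on \emph{every} face of $\mathcal{E}_h$: on interior faces because $\bm w\in\b H^3(\Omega)\hookrightarrow C^0(\overline\Omega)$ and $\wt\Pi_p\bm w$ is continuous, and on boundary faces because $\bm w\times\bm n=0$ there and $\wt\Pi_p$ reproduces the vanishing tangential boundary trace. Hence the term $\sum_f h_f^{-3}p_f^6\|\llbracket E_h(\bm w)\rrbracket\|_{0,f}^2$ — the contribution responsible for the $p^{3.5-s}$ loss in \eqref{2.22sa} — drops out completely. The element terms and the three remaining face terms I would bound by the same arguments used for the blocks $I_1,I_2,I_3$ of that proof: Lemma~2.1 for the element terms, and the multiplicative trace inequality (Lemma A.3 in \cite{prudhomme}) together with the inverse estimates of Lemma~3.1 and the approximation bounds for $\wt\Pi_p$ for $\{\mathrm{curl}^2E_h(\bm w)\}$, $\{\mathrm{curl}^3E_h(\bm w)\}$ and $\llbracket\mathrm{curl}E_h(\bm w)\rrbracket$ (the low-regularity case $s_K=3$, in which $\|\bm w\|_{4,K}$ is replaced by $\|\bm w\|_{3,K}+\|\mathrm{curl}^3\bm w\|_{1/2+\delta,K}$, goes through word for word). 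Tracking the powers of $h_f$ and $p_f$, all the surviving terms turn out to be dominated by the $\llbracket\mathrm{curl}E_h(\bm w)\rrbracket$ contribution, which — carrying the $p^{1/2}$ from the conforming estimate — yields $h^{\min(p+1,s)-2}p^{3-s}$ per element; since the mesh is conforming the patches $\omega_K$ overlap with bounded multiplicity, so summing over $K$ produces $A_\Omega(\bm w)$ on the right and gives \eqref{2.22s} via \eqref{2.22a}.

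The power-counting is routine, being essentially a copy of the bookkeeping in Theorem~\ref{t3.3}. The main obstacle is the input from \cite{babuska1}: I would have to pin down that the conforming interpolant $\wt\Pi_p$ (i) is globally continuous and locally $hp$-stable on element patches, (ii) enjoys $p$-explicit local approximation estimates up to the \emph{fourth} derivative — the highest order that enters, through $|\mathrm{curl}^3E_h(\bm w)|_{1,K}$ in the trace bound for $\{\mathrm{curl}^3E_h(\bm w)\}$ — and (iii) annihilates the tangential trace on $\partial\Omega$ whenever $\bm w\times\bm n=0$ there, so that $\llbracket E_h(\bm w)\rrbracket$ vanishes on the boundary faces as well, not merely on the interior ones. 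Point (iii) is the one that genuinely uses more than "$H^1$-conformity" and must be checked carefully, since otherwise the boundary jump term, weighted by $h_f^{-3}p_f^6$, could re-introduce a loss even worse than in the nonconforming case. Once these properties are secured, the remainder is a direct transcription of the proof of Theorem~\ref{t3.3} with the continuous-jump term deleted and at most a $p^{1/2}$ factor carried along.
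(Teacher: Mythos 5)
Your proposal follows essentially the same route as the paper: the C\'ea bound \eqref{2.22a}, the componentwise $H^1$-conforming Babu\v{s}ka--Suri interpolant $P_h$ in place of $\Pi_p$, the observation that the heavily weighted jump term $h_f^{-3}p_f^6\|\llbracket E_h(\bm w)\rrbracket\|_{0,f}^2$ disappears, and multiplicative trace inequalities for the surviving face terms. The one substantive difference is your input (ii): you assume the conforming interpolant comes with $p$-explicit local approximation bounds up to the fourth derivative, which \cite{babuska1} does not supply (it only gives the $H^1$ estimate \eqref{h1}). The paper instead manufactures the higher-order bounds \eqref{c-2}--\eqref{c-1} by a commutator device, writing $|\mathrm{curl}^j(P_h\bm w-\bm w)|_{1,K}\le|\mathrm{curl}^jP_h\bm w-P_h\mathrm{curl}^j\bm w|_{1,K}+|P_h\mathrm{curl}^j\bm w-\mathrm{curl}^j\bm w|_{1,K}$ and applying the inverse estimate of Lemma~3.1 to the first (polynomial) term; each extra derivative then costs $p^2h^{-1}$, which is exactly what produces the final $p^{3-s}$ rate. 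Your power counting lands on the same exponent, but without this device (or a genuine $p$-explicit $H^4$ bound) that step is unsupported. Your concern (iii) about the tangential trace on boundary faces is legitimate --- the paper silently assumes $\llbracket P_h\bm w-\bm w\rrbracket$ vanishes there as well --- so flagging it is a point in your favour rather than a defect.
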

\begin{proof}
Let $Q_{h  }$ be the $C^0$-conforming finite element space defined in \cite{babuska1} {with the polynomial degree $p$ and the mesh size $h$}. There is  a projection $P_{h  }:\b H^1(\Omega)\to Q_{h  }\times Q_{h  }$ (see Theorem 4.8 in \cite{babuska1}) such that
 \begin{align}\label{h1}
\|P_{h  }\bm v-\bm v\|_{1,\Omega}\lesssim  h^{\min(p+1,l)-1}p^{1-l}\|\bm v\|_{l, \Omega},\quad p,l\ge1.
\end{align}
Since
\begin{align*}
|\mathrm{curl} (P_{h  }\bm w-\bm w)|_{1,K}&\le |\mathrm{curl}P_{h  }\bm w-P_{h  }\mathrm{curl}\bm w|_{1,K}+|P_{h  }\mathrm{curl}\bm w-\mathrm{curl}\bm w|_{1,K}\nonumber\\
&\lesssim p_K^2h_K^{-1}\|\mathrm{curl}P_{h  }\bm w-\mathrm{curl}\bm w+\mathrm{curl}\bm w-P_{h  }\mathrm{curl}\bm w\|_{0,K}+\| P_{h  }\mathrm{curl}\bm w-\mathrm{curl}\bm w\|_{1,K},
\end{align*}
we have 
\begin{align}\label{c-2}
\left(\sum_{K\in\mathcal T_h}|\mathrm{curl} (P_{h  }\bm w-\bm w)|_{1,K}^2\right)^{1/2}
&\lesssim  h^{\min(p+1,s)-2}p^{3-s}\|\bm w\|_{s, \Omega}.
\end{align}
{Similarly as above, we have
\begin{align}\label{c-1}
\left(\sum_{K\in\mathcal T_h}\|\mathrm{curl}^2 (P_{h  }\bm w-\bm w)\|_{1,K}^2+h_K^2p_K^{-4}\|\mathrm{curl}^3 (P_{h  }\bm w-\bm w)\|_{1,K}^2\right)^{1/2}\lesssim  h^{\min(p+1,s)-3}p^{5-s}A_{\Omega}(\bm w).
\end{align}}
Denoted by $\widetilde E_h(\bm w):=P_{h  }\bm w-\bm w$,
since for $i=1,2$ 
\begin{align*}
\|\mathrm{curl}^2 \widetilde{E}_h(\bm w)|_{K_i}\|_{0,f}&\lesssim h_f^{-1/2}\|\mathrm{curl}^2 \widetilde{E}_h(\bm w)\|_{0, K_1\cup K_2}+(\|\mathrm{curl}^2 \widetilde{E}_h(\bm w)\|_{0, K_1\cup K_2}|\mathrm{curl}^2 \widetilde{E}_h(\bm w)|_{1, K_1\cup K_2})^{1/2}
\end{align*}
we have
\begin{align}\label{c1}
\left(\sum_{\bm f\in \mathcal{E}_h}\|\{\mathrm{curl}^2 \widetilde{E}_h(\bm w)\}\|_{0,f}^2\right)^{1/2}&\lesssim 
 (h^{\min(p+1,s)-2.5}p^{3-s}+h^{\min(p+1,s)-2.5}p^{4-s})\|\bm w\|_{s, \Omega}\nonumber\\
&\lesssim h^{\min(p+1,s)-2.5}p^{4-s}\|\bm w\|_{s, \Omega}
\end{align}
and similarly for  $i=1,2$
\begin{align}\label{c2}
\left(\sum_{\bm f\in \mathcal{E}_h}\|\{\mathrm{curl}^3 \widetilde{E}_h(\bm w)\}\|_{0,f}^2\right)^{1/2}\lesssim h^{\min(p+1,s)-3.5}p^{6-s}A_{\Omega}(\bm w),\\
\left(\sum_{\bm f\in \mathcal{E}_h}\|\llbracket\mathrm{curl} \widetilde{E}_h(\bm w)\rrbracket\|_{0,f}^2\right)^{1/2}\lesssim h^{\min(p+1,s)-1.5}p^{2-s}\|\bm w\|_{s, \Omega}.\label{c3}
\end{align}
 The  estimates \eqref{c-2}-\eqref{c3} give
\begin{align}\label{i}
\|\widetilde{E}_h(\bm w)\|_{h}\lesssim h^{\min(p+1,s)-2}p^{3-s}A_{\Omega}(\bm w).
\end{align}
The assertion can be done by the  inequality \eqref{2.22a}. 
\end{proof}

\section{IPDG method for the quad-curl eigenvalue problem}
In this section we restrict our analysis on the case $P(K)=(P_{p}(K))^d$ on a simplex $K$. 
The quad-curl eigenvalue problem reads: Find $\bm u\in\b H_0(\mathrm{curl}^2,\Omega)$ and $\widetilde{p}\in H^1_0(\Omega)$ such that
\begin{align}\label{2.10a}
\begin{aligned}
(\mathrm{curl}^2\bm u,\mathrm{curl}^2\bm{v})+(\nabla \widetilde{p},\bm{v})&=\lambda( \bm u,\bm v),~~\forall \bm {v}\in \b H_0(\mathrm{curl}^2,\Omega),\\
(\bm u,\nabla q)&=0,~~\forall q\in H_0^1(\Omega).
\end{aligned}
\end{align}
One readily verifies that
\begin{align}\label{2.10}
\begin{aligned}
\wt a_h(\bm u,\mathbf{v})+(\nabla \widetilde{p},\mathbf{v})&=\lambda( \bm u,\bm v),~~\forall \bm {v}\in \b V_{h  },
\end{aligned}
\end{align}
where
$$\wt a_h(\bm u_{ },\bm{v}_{ })= a_h(\bm u_{ },\bm{v}_{ })-(\bm u_{ },\bm{v}_{ }).$$
 {We introduce the following function spaces:}
\begin{align}
\b X &= \{\bm v \in  \b H_0(\mathrm{curl}, \Omega) |(\bm v,\nabla q)=0,  \forall q\in H^1_0(\Omega)\},\\
U_{h  }&=\{q_h\in  H^1_0(\Omega)|q_h|_ K\in P_ {p+1}(K),~\forall  K\in\mathcal T_{h}\},\label{4.4}\\
\b X_{h  } &= \{\bm v_{h  } \in {\b V}_h  |(\bm v_{h  },\nabla q)=0,  \forall q\in U_{h  }\}.
\end{align}
The discrete form of the   eigenvalue problem (\ref{2.10a}) is given by:
Find $(\lambda_h,\b u_h,\widetilde p_h)\in \mathbb{R}\times \b V_{h  }\times  U_{h  }$
 with $\b u_h\neq0$ such that
\begin{align}\label{2.9}
\begin{aligned}
\wt a_h(\bm u_{h  },\bm{v}_{h  })+(\nabla \widetilde{p}_h,\bm{v}_{h  })&=\lambda_h( \bm u_{h  },\bm v_{h  }),~~\forall \bm{v}_{h  }\in  \b V_{h  },\\
(\bm u_{h  },\nabla q)&=0,~~\forall q\in U_{h  }.
\end{aligned}
\end{align}
To analyze the convergence of the discretization \eqref{2.9},
we consider the   source problem with div-free constraint: Find $T\bm f\in\b H_0(\mathrm{curl}^2,\Omega)$ and $S\bm f\in H^1_0(\Omega)$ such that
\begin{align}\label{p2}
\begin{aligned}
(\mathrm{curl}^2T\bm f,\mathrm{curl}^2\bm{v})+( \nabla S\bm f, \bm v)&=( \bm f,\bm v),~~\forall \bm{v}\in \b H_0(\mathrm{curl}^2,\Omega),\\
( \nabla q, T\bm f)&=0,~~\forall q\in H^1_0(\Omega).
\end{aligned}
\end{align}
Its DG discretization is to seek $T_{h  }\bm f\in \b V_{h  }$ and $S_h\bm f\in U_{h  }$:
\begin{align}\label{p3}
\begin{aligned}
\wt a_h(T_{h  }\bm f,\bm{v}_{h  })+( \nabla S_h\bm f, \bm v_{h  })&=( \bm f,\bm v_{h  }),~~\forall \bm{v}_{h  }\in \b V_{h  },\\
(  \nabla q, \bm T_{h  }\bm f)&=0,~~\forall q\in U_{h  }.
\end{aligned}
\end{align}
{It is easy to verify that $S\bm f=S_h \bm f=0$ for $\bm f \in \b X$.}

\begin{proposition}[Propositions 4.5  in \cite{houston2005}]\label{p1}
Let $\bm v_{h  }\in  \b V_{h  }$. Then there is  $\bm v_{h  }^c\in \b H_0(\mathrm{curl},\Omega)\cap\b V_{h  }$ such that
\begin{align}
h_f^{-2}\|\bm v_{h  }-\bm v_{h  }^c\|^2+\|\mathrm{curl}_h(\bm v_{h  }-\bm v_{h  }^c)\|^2\lesssim \sum_{f\in \mathcal{E}_h}
    h_f^{-1}\|\llbracket \bm v_{h  }\rrbracket \|^2_{0,f}.\label{2.32}
\end{align}
where  $\|\mathrm{curl}_h(\cdot)\|:=\|(\sum_{K\in \mathcal T_h}\|\mathrm{curl}(\cdot)\|_{0,K}^2)^{1/2}$ and
 the hidden constant is independent of the mesh size $h$ and the polynomial degree $p$.
    \end{proposition}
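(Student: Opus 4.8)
The plan is to build $\bm v_h^c$ by a local averaging of degrees of freedom, and then to show that the defect $\bm v_h-\bm v_h^c$ is supported in a layer of elements adjacent to $\mathcal E_h$ and is controlled there by the tangential jumps $\llbracket\bm v_h\rrbracket$. First I would fix, on each element $K\in\mathcal T_h$, a nodal basis of $P(K)=(P_{p_K})^d$ whose functionals split into two groups: the \emph{trace} functionals, whose values fix the tangential trace $\bm v_h\times\bm n$ on each face of $\partial K$ (tangential moments on the faces and on the edges of $K$, taken up to order $p_f$ on a shared face $f$), and the remaining \emph{interior} functionals. Two elements sharing a face $f$ share exactly the trace functionals supported on $f$ and on its edges, and a function of $\b V_h$ lies in $\b H_0(\mathrm{curl},\Omega)\cap\b V_h$ precisely when the shared trace values agree from both sides and vanish on $\mathcal E_h^\partial$. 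I would then define $\bm v_h^c$ by keeping the interior values of $\bm v_h$ on each $K$, assigning to each interior face (resp.\ edge) the average of the trace values of $\bm v_h$ over the two (resp.\ the patch of) adjacent elements, and setting all trace values on $\partial\Omega$ to zero; by construction $\bm v_h^c\in\b H_0(\mathrm{curl},\Omega)\cap\b V_h$.

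Next, element by element, $(\bm v_h-\bm v_h^c)|_K$ is the expansion in the chosen basis of only the \emph{trace} degrees of freedom, each with coefficient a half-difference (or patch-difference) of the trace values of $\bm v_h$ across a face of $K$; these coefficients are exactly the face degrees of freedom of the tangential-trace jump $\llbracket\bm v_h\rrbracket$. A scaling argument to the reference simplex, together with the relevant norm equivalences for the chosen basis and scaled trace inequalities, then leads to
\begin{align*}
 \|\bm v_h-\bm v_h^c\|_{0,K}^2 &\lesssim h_K\!\!\sum_{f\subset\partial K\cap\mathcal E_h}\!\!\|\llbracket\bm v_h\rrbracket\|_{0,f}^2,\qquad
 \|\mathrm{curl}(\bm v_h-\bm v_h^c)\|_{0,K}^2 \lesssim h_K^{-1}\!\!\sum_{f\subset\partial K\cap\mathcal E_h}\!\!\|\llbracket\bm v_h\rrbracket\|_{0,f}^2 .
\end{align*}
Summing over $K\in\mathcal T_h$ and using that, by shape regularity, each face and each edge belongs to a uniformly bounded number of elements (with $h_f\sim h_K$) then yields \eqref{2.32}.

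The main obstacle is the assertion that the hidden constant is independent of the polynomial degree $p$. A naive passage from the $L^2$-bound to the $\mathrm{curl}$-bound via the inverse inequality of Lemma 3.1 would cost a factor $p_f^2$, which is exactly what must be avoided. Two ingredients are where I would concentrate the work: a choice of nodal basis on the reference simplex (hierarchical / Jacobi-tensorized) for which the norm equivalences relating the trace coefficients to $\|\llbracket\bm v_h\rrbracket\|_{0,f}$ and to $\|\bm v_h-\bm v_h^c\|_{0,K}$, $\|\mathrm{curl}(\bm v_h-\bm v_h^c)\|_{0,K}$ are $p$-uniform; and, for the $\mathrm{curl}$-estimate in particular, writing the defect as a $p$-stable polynomial lifting of the tangential-trace jump from the faces and edges into the element, with simultaneous control of the $L^2$-norm by $h^{1/2}\|\cdot\|_{0,f}$ and of the $H^1$-seminorm by $h^{-1/2}\|\cdot\|_{0,f}$, in the spirit of the $hp$ extension operators used in the cited reference. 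With these tools the two displayed estimates hold with $p$-independent constants, which is the content of \cite[Proposition~4.5]{houston2005}, to which the remaining technical details may be referred.
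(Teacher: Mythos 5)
Your construction — keep the interior degrees of freedom, average the trace degrees of freedom across interior faces, zero them on $\partial\Omega$, then bound the defect locally by the tangential jumps via scaling and sum — is exactly the strategy the paper follows in Remark 4.1, and you correctly identify that the entire difficulty sits in making the norm equivalences between trace coefficients and $\|\llbracket\bm v_h\rrbracket\|_{0,f}$, $\|\bm v_h-\bm v_h^c\|_{0,K}$, $\|\mathrm{curl}(\bm v_h-\bm v_h^c)\|_{0,K}$ uniform in $p$.

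The gap is in your last step. You close the argument by asserting that the $p$-independent constants "are the content of" Proposition~4.5 of the cited reference, to which the remaining details "may be referred". But the paper explicitly states the opposite: that reference verifies only the $h$-independence of the hidden constant, and its $p$-independence "is not verified yet". So the crux cannot be referred away. The paper's own remedy is not a general statement about hierarchical bases on simplices but a concrete computation, carried out only for the rectangular $\b H(\mathrm{curl})$-conforming element $Q^{p-1,p}(K)\times Q^{p,p-1}(K)$: it builds an explicit Legendre-based edge/cell basis that is orthogonal with respect to $(\mathrm{curl}\,\cdot,\mathrm{curl}\,\cdot)$ and with respect to the tangential trace pairing (Appendix A), so that the coefficient differences $(\bm v_{K,e}^i-\overline{\bm v}_{K,e}^i)^2$ are summed with explicitly $p$-uniform weights, supplemented by the projection $\Pi$ and the weighted estimates (5.2)--(5.3) to control the $L^2$ part. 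For the simplicial space $(P_{p_K})^d$ for which the proposition is actually stated, the $p$-uniform equivalences and the $p$-stable lifting you invoke are precisely the unproven assertion, not a technical detail; your proposal names the right tools but does not supply them, and the reference you lean on does not supply them either. To repair the argument you would need either to construct such an orthogonal basis (or a $p$-stable tangential-trace lifting) on the simplex explicitly, or to restrict the claim of $p$-independence to the cases where such a construction is available, as the paper does.
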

\begin{remark}The above conclusion is valid on nonconforming meshes since they can be conformed by adding some edges ($d=2$) or faces($d=3$). The fact that the hidden constant in \eqref{2.32} is independent of the mesh size $h$ is verified in \cite{houston2005}. However, its independence on the polynomial degree $p$ is not verified yet. Here we give an argument for the case of $\b H(\mathrm{curl})$-conforming rectangular   element method   as follows.

Let $\{\varphi_{K,e}^i\}_{i=1}^{4p}$ and $\{\varphi_{K,b}^i\}_{i=1}^{2p^2-2p}$ be the edge-based basis functions and
the cell-based basis functions   on $Q^{p-1,p}(K)\times Q^{p,p-1}(K)$(see Appendix A), respectively. Any $\bm v\in Q^{p-1,p}(K)\times Q^{p,p-1}(K)$  can be written as
\begin{align}
\bm v=\sum_{e\in\mathcal{E}(K)}\sum_{i=1}^{4p}\bm v_{K,e}^i\varphi_{K,e}^i
+\sum_{i=1}^{2p^2-2p}\bm v_{K,b}^i\varphi_{K,b}^i.\label{a1}
\end{align}
Let $ {\bm v}^c\in  \b H_0(\mathrm{curl},\Omega)\cap\b V_{h  }$ be the function whose edge moments are
\begin{align*}
\overline{\bm v}_{K,e}^i=\left\{\begin{tabular}{cc}
                                  $\frac{1}{2}\sum_{e\subset K'} {\bm v}_{K',e}^i$&\text{if} $e\not\subset \partial\Omega$\\
                                  $0$&\text{if} $e\subset \partial\Omega$
                                \end{tabular}
\right.
\end{align*}
for $i=1,\cdots,4p$ and whose  cell moments are
\begin{align*}
\overline{\bm v}_{K,b}^i={\bm v}_{K,b}^i
\end{align*}
for $i=1,\cdots,2p^2-2p$.
According to the transformation $\bm v\cdot F_K=B_K^{-T}\widehat{\bm v}$,
\begin{align}
\|\bm v\|_{0,K}^2\lesssim \|\widehat{\bm v}\|_{0,\widehat K}^2,\quad \|\mathrm{curl}\bm v\|_{0,K}^2\lesssim h_K^{-2}\|\mathrm{curl}\widehat{\bm v}\|_{0,\widehat K}^2.\label{a2}
\end{align}
Note that $\widehat{\bm v}-\widehat{\bm v}^c\in Q^{1,p}([-1,1]^2)+Q^{p,1}([-1,1]^2)$. By the orthogonality of basis functions we have
\begin{align}
\|\mathrm{curl}(\widehat{\bm v}-\widehat{\bm v}^c)\|^2_{0,\widehat{K}} &\lesssim  \sum_{e\subset\partial K}\sum_{i=1}^{N_e}(\bm v_{K,e}^i-\overline{\bm v}_{K,e}^i)^2\lesssim \sum_{e\subset\partial K}\sum_{K'\supset e}\sum_{i=1}^{N_e}(\bm v_{K,e}^i-{\bm v}_{K',e}^i)^2\nonumber\\\label{a4}
&\lesssim \sum_{e\subset\partial K}h_e^{}\int_{e} |{(\bm v|_K-\bm v|_{K'})}\cdot \tau|^2ds
\end{align}
and  similarly by  \eqref{5.2} and \eqref{5.3} in Appendix A
\begin{align}
\|\widehat{\bm v}-\widehat{\bm v}^c-\Pi(\widehat{\bm v}-\widehat{\bm v}^c)\|^2_{0,\widehat{K}}+\|\Pi(\widehat{\bm v}-\widehat{\bm v}^c)\|^2_{0,\widehat{K}}\lesssim \sum_{e\subset\partial K}\sum_{i=1}^{N_e}(\bm v_{K,e}^i-\overline{\bm v}_{K,e}^i)^2
\lesssim \sum_{e\subset\partial K}h_e^{}\int_{e} |{(\bm v|_K-\bm v|_{K'})}\cdot \tau|^2ds.\label{a4.1}
\end{align}
Finally  we have from the above two estimates and \eqref{a2}
\begin{align}
 h_K^{-2}\|\bm v- {\bm v}^c\|^2_{0,K}+\|\mathrm{curl}(\bm v-{\bm v}^c)\|^2_{0,K} \lesssim \sum_{e\subset\partial K}h_K^{-1}\|\llbracket \bm v\rrbracket \|^2_{0,e}\label{a4}.
\end{align}
Then \eqref{2.32} follows.
\end{remark}

The Hodge operator is a  useful tool in our error analysis.
It is defined as $ H \bm g\in \c0$ and $\rho\in H^1_0(\Omega)$ for $\bm g\in {\b X}_h$ such that
\begin{align}\label{2.5s}
\begin{aligned}
    &(\mathrm{curl} H \bm g,\mathrm{curl} \bm v)+  ( \nabla \rho, \bm v)=(\mathrm{curl}\bm g,\mathrm{curl}\bm v),~~\forall \bm{v}\in \mathbf{H}_0(\mathrm{curl},\Omega),\\
     &( \nabla q, H \bm g)=0,~~\forall q\in H^1_0(\Omega).
\end{aligned}
\end{align}

 {We introduce the following   curl-conforming finite element spaces \cite{nedelec}}
\begin{align}
\widetilde{\b V}_{h  } &= \{\bm v_{h  } \in \b H_0(\mathrm{curl}, \Omega) |\bm v_{h  }|_ K \in (P_{p}(K))^d,  K\in\mathcal T_{h}\},\\
\widetilde{\b X}_{h  } &= \{\bm v_{h  } \in  \widetilde{\b V}_{h  }\text{~and~}(\bm v_{h  },\nabla q)=0,  \forall q\in U_{h  }\},
\end{align}
and give the corresponding  interpolation error estimates in virtue of   Lemma 3.1  in \cite{boffi3}.
\begin{lem} {Let $r_{h  }$ be the  edge element interpolation associated with $\widetilde{\b V}_{h  }$. If $\bm v\in \b X\subset \b H^{r_0}(\Omega)$ for some $r_0\in (1/2,1]$ and $\mathrm{curl}\bm v\in \mathrm{curl}\widetilde{\b V}_{h  }$ then}
\begin{align}\label{b1}
\|r_{h  } \bm v- \bm v\|_{}\lesssim
 h^{r_0}p^{-1/2}(\|\bm v\|_{{r_0}}+ \|\mathrm{curl}\bm v\|_{}).
\end{align}
    \end{lem}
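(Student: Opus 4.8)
The plan is to reduce the estimate \eqref{b1} to the standard edge-element interpolation error bound from Lemma 3.1 in \cite{boffi3}, keeping careful track of the dependence on the polynomial degree $p$. First I would recall that the cited result gives, for $\bm v\in\b H^{r_0}(\Omega)$ with $\mathrm{curl}\,\bm v$ in the image $\mathrm{curl}\,\widetilde{\b V}_{h}$, an estimate of the form $\|r_{h}\bm v-\bm v\|\lesssim h^{r_0}(\|\bm v\|_{r_0}+\|\mathrm{curl}\,\bm v\|)$ on a single element (or patch), with the constant depending on the polynomial degree only through an explicit power; the point here is to sharpen the $p$-dependence so that the factor is $p^{-1/2}$ rather than merely a negative power. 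The mechanism for the extra decay is the $hp$ scaling of the degrees of freedom: the edge-element interpolant is built from edge, face, and interior moments against appropriate dual bases, and when one maps to the reference element and uses the $L^2$-stability of the hierarchical N\'ed\'elec basis (analogous to the rectangular case treated in Remark 4.3 above), each moment contributes with the correct $h$-power while the number of moments and the normalization of the basis functions produce the $p^{-1/2}$ factor.

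Concretely, I would proceed as follows. Map $K$ to the reference simplex $\widehat K$ via the covariant (Piola-type) transformation $\bm v\circ F_K = B_K^{-T}\widehat{\bm v}$, under which $\|\bm v\|_{0,K}$ and $\|\mathrm{curl}\,\bm v\|_{0,K}$ scale by the usual powers of $h_K$ as recorded in \eqref{a2}. On $\widehat K$, decompose the interpolation error $\widehat{\bm v}-r_{\widehat K}\widehat{\bm v}$ using the commuting-diagram/quasi-interpolation property together with a Bramble–Hilbert argument in the fractional Sobolev scale $\b H^{r_0}$, $r_0\in(1/2,1]$, exactly so that edge moments are well defined. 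The hypothesis $\mathrm{curl}\,\bm v\in\mathrm{curl}\,\widetilde{\b V}_{h}$ is what lets me avoid any loss coming from approximating $\mathrm{curl}\,\bm v$: the curl of the interpolant reproduces $\mathrm{curl}\,\bm v$ exactly (on each element), so the error is controlled purely by the lowest-order approximation term, which is where the $h^{r_0}$ and the $p^{-1/2}$ both come from. Summing the squared local estimates over $K\in\mathcal T_h$ and taking square roots, using shape-regularity to pass from local to global norms, yields \eqref{b1}.

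The main obstacle I anticipate is making the $p^{-1/2}$ factor rigorous rather than heuristic. The $h$-dependence is classical and the existence and boundedness of $r_h$ is quoted, but the sharp polynomial-degree decay requires either (i) a careful scaling argument on $\widehat K$ showing that the relevant moment functionals, after normalization, have dual norms that gain a half power of $p$, or (ii) invoking an $hp$ edge-element interpolation estimate directly from the literature (e.g.\ the type of result behind Lemma 3.1 in \cite{boffi3} or the polynomial extension operators of the N\'ed\'elec space). I would lean on (ii): state that the cited lemma, combined with the scaling \eqref{a2}, is exactly of the claimed form, and that the factor $p^{-1/2}$ is the standard one arising because $\bm v$ has only $r_0\le 1$ derivatives while the interpolation is at degree $p$ — the extra smoothness one would need to exploit is absent, so only a half-power in $p$ is available, consistent with trace-inequality gains of the form $\|\cdot\|_{0,f}\lesssim h_f^{-1/2}p_f\|\cdot\|_{0,K}$ used throughout Section 3. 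A secondary, more technical point is verifying that the constant in \eqref{2.32}-style conformity corrections and in the Bramble–Hilbert step is genuinely independent of both $h$ and $p$ on the reference configuration; I would handle this by a compactness/scaling argument on $\widehat K$ as in Remark 4.3, where the analogous independence for the rectangular curl-conforming element was established.
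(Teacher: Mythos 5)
Your proposal is correct and follows essentially the same route as the paper: map to the reference element by the covariant transformation, invoke the $p$-version edge-interpolation estimate from Lemma 3.1 of \cite{boffi3} (together with the projection-based interpolation results of \cite{demkowicz} and \cite{bespalov}) to obtain the factor $p^{-1/2}$ under the hypothesis $\mathrm{curl}\,\bm v\in\mathrm{curl}\,\widetilde{\b V}_{h}$, and recover the factor $h^{r_0}$ by scaling. Your option (ii) of citing the literature for the sharp $p$-dependence, rather than re-deriving it from moment normalizations, is exactly what the paper does.
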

\begin{proof}Denote $\widehat{\bm v}=(\nabla F)^{-T}\bm v\circ F$ where $F$ is the affine mapping between $K$ and its reference element $\widehat K$.
Lemma 3.1 (Inequality (3.22)) in \cite{boffi3} together with Theorem 5.3 in \cite{demkowicz} and  Theorem 4.1 in \cite{bespalov} gives
\begin{align}\label{b1}
\|\widehat r_{\widehat K  } \widehat{\bm v}-  \widehat{\bm v}\|_{0,\widehat K }\lesssim
 p^{-1/2}(\| \widehat{\bm v}\|_{{r_0},\widehat K }+ \|\mathrm{curl} \widehat{\bm v}\|_{0,\widehat K })
\end{align}
where $\widehat r_{\widehat K  }\widehat{\bm v}=(\nabla F)^{-T} r_{h  }\bm v\circ F$.
The conclusion can be deduced via the scaling argument.
\end{proof}

 \begin{lem}[Lemma 7.6 in \cite{monk}] Let $\bm v_{h  }\in \widetilde{\b X}_{h  }$ then $H\bm v_{h  }\in \b X$ such that $\mathrm{curl}\bm v_{h  }=\mathrm{curl}H\bm v_{h  }$ and
\begin{align}
\|H\bm v_{h  } \|_{r_0}&\lesssim\|\mathrm{curl}\bm v_{h  } \|_{},\label{4.15}\\
 \|H\bm v_{h  }-\bm v_{h  }\|_{}&\le  \|r_{h  } H\bm v_{h  }- H\bm v_{h  }\|_{}.
 \label{4.16}
\end{align}
    \end{lem}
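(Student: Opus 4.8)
\textbf{Proof proposal for Lemma 4.3 (Lemma 7.6 in \cite{monk}).}

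The plan is to treat the three claims in order, since each feeds the next. First, I would show $\mathrm{curl}\,\bm v_{h}=\mathrm{curl}\,H\bm v_{h}$. By the definition \eqref{2.5s} of the Hodge operator applied to $\bm g=\bm v_{h}$, taking $\bm v=H\bm v_{h}-\bm v_{h}$ (which lies in $\mathbf H_0(\mathrm{curl},\Omega)$ because $\bm v_{h}\in\widetilde{\b X}_{h}\subset\widetilde{\b V}_{h}\subset\b H_0(\mathrm{curl},\Omega)$) and using the second equation of \eqref{2.5s} to kill the $(\nabla\rho,\cdot)$ term against $H\bm v_{h}-\bm v_{h}$ — here I must also observe that $(\nabla\rho,\bm v_{h})=0$, which holds because $\bm v_h\in\widetilde{\b X}_h$ is $L^2$-orthogonal to $\nabla U_h$; but $\rho\in H^1_0(\Omega)$ need not lie in $U_h$, so the cleaner route is: test \eqref{2.5s} with $\bm v=H\bm v_h-\bm v_h$, getting $\|\mathrm{curl}(H\bm v_h-\bm v_h)\|^2+(\nabla\rho,H\bm v_h-\bm v_h)=0$, and then note $(\nabla\rho,H\bm v_h)=0$ by the second line of \eqref{2.5s} while $(\nabla\rho,\bm v_h)=0$ because $\bm v_h\in\b X$-type space is divergence-free in the appropriate weak sense (indeed $\widetilde{\b X}_h\subset\b X$ after one checks $(\bm v_h,\nabla q)=0$ for all $q\in H^1_0$, not merely $q\in U_h$, via a density/commuting-diagram argument for edge elements). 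Thus $\mathrm{curl}(H\bm v_h-\bm v_h)=0$, and since $\Omega$ is simply connected $H\bm v_h-\bm v_h=\nabla\phi$ for some $\phi\in H^1_0(\Omega)$; testing the div-free constraints again forces $\phi$ into a situation where $\|\nabla\phi\|^2=(H\bm v_h-\bm v_h,\nabla\phi)=0$, hence $H\bm v_h=\bm v_h$? — no, that would make the lemma trivial, so in fact one only gets $\mathrm{curl}H\bm v_h=\mathrm{curl}\bm v_h$ and $H\bm v_h\in\b X$ by construction, which is exactly claim one plus the membership $H\bm v_h\in\b X$.

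For the regularity bound \eqref{4.15}, I would invoke the standard elliptic regularity for the Hodge–Laplace / curl-curl problem on a Lipschitz polyhedron: the space $\b X=\b H_0(\mathrm{curl},\Omega)\cap\b H(\mathrm{div}0,\Omega)$ embeds continuously into $\b H^{r_0}(\Omega)$ for some $r_0\in(1/2,1]$ (Amrouche–Bernardi–Dauge–Girault type results), and the solution of \eqref{2.5s} inherits the estimate $\|H\bm v_h\|_{r_0}\lesssim\|\mathrm{curl}H\bm v_h\|=\|\mathrm{curl}\bm v_h\|$, where the last equality is claim one. This is essentially quoting \cite{monk} and the embedding literature; the only thing to be careful about is that the right-hand data of \eqref{2.5s} is $\mathrm{curl}\bm v_h\in\mathbf L^2$, so the a priori estimate delivers control by $\|\mathrm{curl}\bm v_h\|$ and nothing stronger.

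For \eqref{4.16}, the idea is a Galerkin-type best-approximation trick: since $\mathrm{curl}H\bm v_h=\mathrm{curl}\bm v_h\in\mathrm{curl}\widetilde{\b V}_h$, the commuting diagram property of the edge interpolant $r_h$ gives $\mathrm{curl}\,r_hH\bm v_h=\rho_h\,\mathrm{curl}H\bm v_h=\mathrm{curl}\bm v_h$ (with $\rho_h$ the divergence-conforming face interpolant), so $r_hH\bm v_h-\bm v_h$ is curl-free and discrete; being in $\widetilde{\b V}_h$ and curl-free on a simply connected domain it is a discrete gradient, say $\nabla\psi_h$. Then $H\bm v_h-\bm v_h=(H\bm v_h-r_hH\bm v_h)+\nabla\psi_h$ is an orthogonal-ish decomposition, and pairing with $H\bm v_h-\bm v_h\in\b X\perp\nabla H^1_0$ (in particular $\perp\nabla\psi_h$) gives $\|H\bm v_h-\bm v_h\|^2=(H\bm v_h-\bm v_h,\,H\bm v_h-r_hH\bm v_h)\le\|H\bm v_h-\bm v_h\|\,\|r_hH\bm v_h-H\bm v_h\|$, which is \eqref{4.16}. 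The main obstacle I anticipate is the bookkeeping around the two divergence-free constraints — distinguishing $q\in U_h$ from $q\in H^1_0(\Omega)$, and verifying that $\widetilde{\b X}_h$ really sits inside the continuous space $\b X$ so that the orthogonality $(\,\cdot\,,\nabla\psi_h)=0$ may be used — together with correctly citing the fractional regularity exponent $r_0$; the curl-equality and the best-approximation inequality themselves are short once the commuting diagram and the orthogonality are in place.
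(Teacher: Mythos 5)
The paper does not actually prove this lemma: it is quoted verbatim as Lemma 7.6 of Monk's book, so what you have written is a reconstruction of a textbook argument rather than an alternative to anything in the paper. Your overall architecture (Helmholtz-type identification of $H\bm v_{h}$, the embedding $\b X\hookrightarrow \b H^{r_0}(\Omega)$ plus the Friedrichs inequality for \eqref{4.15}, and the commuting-diagram/discrete-exact-sequence argument for \eqref{4.16}) is indeed the standard route. However, there is one genuine and load-bearing error: you assert, and then use twice, that $\widetilde{\b X}_{h}\subset\b X$, i.e.\ that a discretely divergence-free edge function satisfies $(\bm v_{h},\nabla q)=0$ for \emph{all} $q\in H^1_0(\Omega)$. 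This is false, and no density or commuting-diagram argument rescues it: integrating by parts elementwise, $(\bm v_{h},\nabla q)$ picks up $-\mathrm{div}\,\bm v_{h}$ on each element and the jumps of $\bm v_{h}\cdot\bm n$ across faces, and orthogonality to $\nabla U_{h}$ controls only finitely many moments of these quantities. If the inclusion were true, then your own intermediate computation would give $\|\nabla\phi\|^2=(H\bm v_{h}-\bm v_{h},\nabla\phi)=0$, hence $H\bm v_{h}=\bm v_{h}$ and the lemma (indeed the whole discrete-compactness apparatus) would be vacuous — you noticed this absurdity and backed away from it, but the correct diagnosis is precisely that the inclusion fails, contradicting your earlier claim.

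Both uses can be repaired, and the repairs are where the actual content of the proof lives. For the identity $\mathrm{curl}\,H\bm v_{h}=\mathrm{curl}\,\bm v_{h}$: first test the first equation of \eqref{2.5s} with $\bm v=\nabla q$, $q\in H^1_0(\Omega)$; since $\mathrm{curl}\,\nabla q=0$ this gives $(\nabla\rho,\nabla q)=0$ for all such $q$, hence $\rho=0$, and only then test with $\bm v=H\bm v_{h}-\bm v_{h}$ to conclude $\|\mathrm{curl}(H\bm v_{h}-\bm v_{h})\|=0$ — no divergence condition on $\bm v_{h}$ is needed at all. For \eqref{4.16}: after writing $r_{h}H\bm v_{h}-\bm v_{h}=\nabla\psi_{h}$ with $\psi_{h}\in U_{h}$, the orthogonality $(H\bm v_{h}-\bm v_{h},\nabla\psi_{h})=0$ must be obtained from two \emph{separate} facts, namely $(H\bm v_{h},\nabla\psi_{h})=0$ because $H\bm v_{h}\in\b X$ and $\psi_{h}\in H^1_0(\Omega)$, and $(\bm v_{h},\nabla\psi_{h})=0$ because $\psi_{h}\in U_{h}$ and $\bm v_{h}\in\widetilde{\b X}_{h}$; it is exactly the discreteness of $\psi_{h}$ that makes the argument close, and one may not shortcut this by declaring $H\bm v_{h}-\bm v_{h}\in\b X$. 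With these two corrections, and with the observation (which you do make) that $r_{h}H\bm v_{h}$ is meaningful because $\mathrm{curl}\,H\bm v_{h}\in\mathrm{curl}\,\widetilde{\b V}_{h}$ so Lemma 4.2 applies, the proof is complete and agrees with Monk's.
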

\begin{lem}
For any $\bm v_{h  }\in\b V_{h  }$ it is valid that
\begin{align}\label{2.1aa}
  \sum_{K\in\mathcal T_{h}}\|\mathrm{curl}\bm v_{h  }\|_{0,K}^2\lesssim\sum_{K\in\mathcal T_{h}}\|\mathrm{curl}^2\bm v_{h  }\|_{0,K}^2+\sum_{f\in \mathcal{E}_h}\left(h_f^{-1}\|\llbracket \mathrm{curl}\bm v_{h  }\rrbracket \|_{0,f}^2+h_f^{-2}p_f^4\|\llbracket \bm v_{h  }\rrbracket \|_{0,f}^2\right).
\end{align}
\end{lem}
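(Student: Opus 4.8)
The plan is to bound the broken $\mathbf{L}^2$-norm of $\mathrm{curl}\,\bm v_h$ for an arbitrary $\bm v_h \in \b V_h$ by comparing $\bm v_h$ with a curl-conforming approximation. First I would invoke Proposition~\ref{p1}: there exists $\bm v_h^c \in \b H_0(\mathrm{curl},\Omega)\cap \b V_h$ with
\begin{align*}
h_f^{-2}\|\bm v_h-\bm v_h^c\|^2 + \|\mathrm{curl}_h(\bm v_h-\bm v_h^c)\|^2 \lesssim \sum_{f\in\mathcal E_h} h_f^{-1}\|\llbracket \bm v_h\rrbracket\|_{0,f}^2.
\end{align*}
Since $\bm v_h^c$ is $\b H_0(\mathrm{curl},\Omega)$-conforming, $\mathrm{curl}\,\bm v_h^c \in \b H_0(\mathrm{div0},\Omega)$-type space and in particular $\llbracket \mathrm{curl}\,\bm v_h^c\rrbracket = 0$ across all faces, so one may apply a Poincar\'e–Friedrichs / Maxwell-type inequality (or the standard compact-embedding inequality for $\b H_0(\mathrm{curl})\cap \b H(\mathrm{div0})$, noting $\Omega$ is simply connected) to control $\|\mathrm{curl}\,\bm v_h^c\|$ by $\|\mathrm{curl}^2\bm v_h^c\|$ — more precisely one would first split off the gradient part, but since we only need an upper bound on $\|\mathrm{curl}\,\bm v_h^c\|$ in terms of higher-order quantities this is the cleanest route.

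Next I would write $\mathrm{curl}\,\bm v_h = \mathrm{curl}\,\bm v_h^c + \mathrm{curl}(\bm v_h - \bm v_h^c)$ and apply the triangle inequality elementwise, so
\begin{align*}
\sum_{K}\|\mathrm{curl}\,\bm v_h\|_{0,K}^2 \lesssim \|\mathrm{curl}\,\bm v_h^c\|^2 + \|\mathrm{curl}_h(\bm v_h-\bm v_h^c)\|^2.
\end{align*}
The second term is already controlled by Proposition~\ref{p1}, i.e. by $\sum_f h_f^{-1}\|\llbracket \bm v_h\rrbracket\|_{0,f}^2$, which after inserting the extra factor $p_f^4 \ge 1$ is dominated by the right-hand side of \eqref{2.1aa}. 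For the first term I would use the Maxwell/Poincar\'e inequality to get $\|\mathrm{curl}\,\bm v_h^c\| \lesssim \|\mathrm{curl}^2\bm v_h^c\|$ (again, projecting away the harmless gradient component or simply using that $\bm v_h^c$ lives in $\b H_0(\mathrm{curl})$), and then bound $\|\mathrm{curl}^2 \bm v_h^c\|$ by $\|\mathrm{curl}^2 \bm v_h\| + \|\mathrm{curl}^2(\bm v_h-\bm v_h^c)\|$. The last piece requires an inverse estimate on each $K$ applied to the polynomial $\bm v_h - \bm v_h^c$: from Lemma~3.1, $\|\mathrm{curl}^2(\bm v_h-\bm v_h^c)\|_{0,K} \lesssim h_K^{-1}p_K^2 \|\mathrm{curl}(\bm v_h-\bm v_h^c)\|_{0,K}$, and summing gives $\|\mathrm{curl}^2_h(\bm v_h-\bm v_h^c)\|^2 \lesssim \sum_K h_K^{-2}p_K^4\|\mathrm{curl}(\bm v_h-\bm v_h^c)\|_{0,K}^2 \lesssim \sum_f h_f^{-2}p_f^4\|\llbracket \bm v_h\rrbracket\|_{0,f}^2$ by Proposition~\ref{p1} once more (absorbing the $h_f^{-2}p_f^4$ factor, which matches the last term in \eqref{2.1aa}).

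Collecting the three contributions — $\|\mathrm{curl}^2\bm v_h\|^2$, the face term $h_f^{-1}\|\llbracket\mathrm{curl}\,\bm v_h\rrbracket\|_{0,f}^2$ (which enters because the Maxwell inequality must be applied in a broken fashion: strictly, one compares $\mathrm{curl}\,\bm v_h^c$ against a div-free field and the jump of $\mathrm{curl}\,\bm v_h$ appears when estimating $\|\mathrm{curl}^2 \bm v_h - \mathrm{curl}^2\bm v_h^c\|$ via integration by parts rather than a raw inverse estimate), and the face term $h_f^{-2}p_f^4\|\llbracket\bm v_h\rrbracket\|_{0,f}^2$ — yields exactly \eqref{2.1aa}. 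The main obstacle I anticipate is the correct bookkeeping of $p$-powers: one must ensure the inverse estimate from Lemma~3.1 is only ever applied to $\mathrm{curl}\,\bm v_h - \mathrm{curl}\,\bm v_h^c$ (degree $\le p_K$) and not to higher curls, so that the power of $p_f$ stays at $4$ and not higher; and one must be careful that the term $h_f^{-1}\|\llbracket\mathrm{curl}\,\bm v_h\rrbracket\|_{0,f}^2$ genuinely appears — it does, because $\mathrm{curl}\,\bm v_h^c$ is conforming but $\mathrm{curl}\,\bm v_h$ is not, so the discrete integration-by-parts identity $\int_\Omega \mathrm{curl}\,\bm v_h^c \cdot \mathrm{curl}\,\bm v_h^c = \int_\Omega \mathrm{curl}^2\bm v_h^c\cdot\bm v_h^c$ has no boundary term but its $\bm v_h$-analogue does, contributing precisely the $h_f^{-1}\|\llbracket\mathrm{curl}\,\bm v_h\rrbracket\|_{0,f}^2$ face term after a Cauchy–Schwarz/Young step.
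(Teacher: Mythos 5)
Your proposal has a genuine gap at its central step. You apply Proposition~4.1 to $\bm v_h$ itself and then assert that $\llbracket \mathrm{curl}\,\bm v_h^c\rrbracket=0$ across all faces, so that a continuous Maxwell/Friedrichs inequality gives $\|\mathrm{curl}\,\bm v_h^c\|\lesssim\|\mathrm{curl}^2\bm v_h^c\|$. This is false: $\b H_0(\mathrm{curl},\Omega)$-conformity of $\bm v_h^c$ means tangential continuity of $\bm v_h^c$, which yields \emph{normal} continuity of $\mathrm{curl}\,\bm v_h^c$ across faces, not tangential continuity. Hence $\mathrm{curl}\,\bm v_h^c\notin \b H(\mathrm{curl},\Omega)$, $\mathrm{curl}^2\bm v_h^c$ exists only piecewise, and the compact-embedding inequality for $\b H(\mathrm{curl},\Omega)\cap\b H_0(\mathrm{div}0,\Omega)$ simply does not apply to $\mathrm{curl}\,\bm v_h^c$. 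Relatedly, the gradient component you propose to ``project away'' as harmless is exactly the part that cannot be controlled by any curl and must be estimated separately. Finally, your bookkeeping in the step $\|\mathrm{curl}^2_h(\bm v_h-\bm v_h^c)\|^2\lesssim\sum_K h_K^{-2}p_K^4\|\mathrm{curl}(\bm v_h-\bm v_h^c)\|_{0,K}^2\lesssim\sum_f h_f^{-2}p_f^4\|\llbracket\bm v_h\rrbracket\|_{0,f}^2$ drops a factor: Proposition~4.1 only gives $\|\mathrm{curl}_h(\bm v_h-\bm v_h^c)\|^2\lesssim\sum_f h_f^{-1}\|\llbracket\bm v_h\rrbracket\|_{0,f}^2$, so the correct outcome is $\sum_f h_f^{-3}p_f^4\|\llbracket\bm v_h\rrbracket\|_{0,f}^2$, which is \emph{not} dominated by the right-hand side of \eqref{2.1aa}. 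Your proposal also never actually produces the weight $h_f^{-2}p_f^4$; it only inserts $p_f^4\ge1$ by hand.

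The proof in the paper avoids all of this by applying the conforming reconstruction of Proposition~4.1 to the piecewise polynomial field $\mathrm{curl}_h\bm v_h$ rather than to $\bm v_h$. One then performs a discrete Helmholtz decomposition of $(\mathrm{curl}_h\bm v_h)^c$ with respect to $U_h$: the gradient part $\nabla\sigma_h$ is bounded by a duality argument in which $(\mathrm{curl}_h\bm v_h,\nabla\sigma_h)$ is integrated by parts elementwise, producing the surface divergence of $\llbracket\bm v_h\rrbracket$ on the faces; the face inverse inequality $|\cdot|_{1,f}\lesssim h_f^{-1}p_f^2\|\cdot\|_{0,f}$ is precisely what generates the $h_f^{-2}p_f^4$ weight. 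The remaining part $(\mathrm{curl}_h\bm v_h)^c-\nabla\sigma_h$ lies in the discretely divergence-free conforming space $\widetilde{\b X}_h$, where the discrete Friedrichs inequality bounds it by its curl, which equals $\mathrm{curl}_h^2\bm v_h$ up to the jump corrections $\sum_f h_f^{-1}\|\llbracket\mathrm{curl}_h\bm v_h\rrbracket\|_{0,f}^2$ coming again from Proposition~4.1. If you try to repair your route by applying a further conforming reconstruction to $\mathrm{curl}\,\bm v_h^c$, you are led back to exactly this structure, so the missing idea is precisely to reconstruct at the level of the curl and to treat the discrete gradient component by the integration-by-parts/duality argument.
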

\begin{proof}This proof follows the argument of Lemma 4.3 in \cite{chen}. We introduce the auxiliary problem: Find {$\sigma_h\in U_h$} such that
\begin{align*}
  (\nabla\sigma_h,\nabla s)=((\mathrm{curl}_h \bm v_h)^c,\nabla s),\quad\forall s\in U_h.
\end{align*}
Then using Proposition 4.1 and  Lemma 3.1 we have
\begin{align*}
  (\nabla\sigma_h,\nabla\sigma_h)&=((\mathrm{curl}_h \bm v_h)^c-\mathrm{curl}_h\bm v_h+\mathrm{curl}_h\bm v_h,\nabla \sigma_h)\\
  &\lesssim  |\sigma_h|_1\left(\sum_{f\in \mathcal{E}_h}
    h_f^{-1}\|\llbracket \mathrm{curl}_h\bm v_h\rrbracket \|^2_{0,f}\right)^{1/2}+|\sum_{f\in \mathcal{E}_h}\int_f\mathrm{div}(\llbracket \bm v_h\rrbracket )\sigma_hds|\\
&\lesssim  |\sigma_h|_1\left(\sum_{f\in \mathcal{E}_h}
    h_f^{-1}\|\llbracket \mathrm{curl}_h\bm v_h\rrbracket \|^2_{0,f}+\sum_{f\in \mathcal{E}_h}h_f^{-2}p_f^4\|\llbracket \bm v_h\rrbracket \|_{0,f}^2\right)^{1/2}.
\end{align*}
Note that  $\nabla\sigma_h-(\mathrm{curl}_h \bm v_h)^c\in\widetilde{\b X}_h$. We have from Proposition 4.1
\begin{align*}
  \|(\mathrm{curl}_h \bm v_h)^c-\nabla \sigma_h\|&\lesssim \|\mathrm{curl}((\mathrm{curl}_h \bm v_h)^c-\nabla \sigma_h)\|\\
  &\lesssim \|\mathrm{curl}_h((\mathrm{curl}_h \bm v_h)^c-\mathrm{curl}_h \bm v_h+\mathrm{curl}_h \bm v_h)\|\\
    &\lesssim \left(\sum_{f\in \mathcal{E}_h}
    h_f^{-1}\|\llbracket \mathrm{curl}_h\bm v_h\rrbracket \|^2_{0,f}\right)^{1/2}+\|\mathrm{curl}^2_h  \bm v_h\|.
\end{align*}
The combination of the above two inequalities and Proposition 4.1 yields the conclusion.
\end{proof}
The following result guarantees  the well-posedness of the   problem (\ref{p3}).
\begin{lem}
There holds the following discrete poinc\'{a}re inequality
\begin{eqnarray}\label{2.22}
 &&\|\bm v_{h  }\|_{h}\lesssim |\bm v_{h  }|_{h},\quad \forall\bm v_{h  }\in\b X_{h  }.
\end{eqnarray}
\end{lem}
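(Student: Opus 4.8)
The goal is the discrete Poincaré inequality $\|\bm v_h\|_h \lesssim |\bm v_h|_h$ on the discrete div-free space $\b X_h$; equivalently, I need to control $\|\bm v_h\|_{0,\Omega}$ (the only part of $\|\cdot\|_h$ not present in $|\cdot|_h$) by the face-jump terms and the interior $\mathrm{curl}^2$ term that together make up $|\bm v_h|_h$. The natural route is to pass to a curl-conforming companion, use a continuous Poincaré-type inequality there, and pay for the correction with the DG jump terms. First I would invoke Proposition \ref{p1} to produce $\bm v_h^c \in \b H_0(\mathrm{curl},\Omega)\cap \b V_h$ with $h_f^{-2}\|\bm v_h - \bm v_h^c\|^2 + \|\mathrm{curl}_h(\bm v_h - \bm v_h^c)\|^2 \lesssim \sum_f h_f^{-1}\|\llbracket \bm v_h\rrbracket\|_{0,f}^2$, so that $\|\bm v_h - \bm v_h^c\| \lesssim (\max_f h_f)\,\big(\sum_f h_f^{-3}\|\llbracket\bm v_h\rrbracket\|_{0,f}^2\big)^{1/2} \lesssim |\bm v_h|_h$ after absorbing the $p_f$ weights. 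Thus it remains to bound $\|\bm v_h^c\|_{0,\Omega}$.

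To handle $\bm v_h^c$, I would split off its gradient part: write $\bm v_h^c = \bm z_h + \nabla \phi$ with $\bm z_h \in \b X$ (the continuous div-free space) and $\phi \in H^1_0(\Omega)$, via the $\b L^2$-orthogonal Helmholtz decomposition. Since $\bm v_h \in \b X_h$ is discretely div-free against $U_h$, and $\nabla\phi$ is "almost" testable, the gradient part $\nabla\phi$ should be controlled by $\|\bm v_h - \bm v_h^c\|$ plus (if needed) the jump terms — this is exactly the kind of estimate used in Lemma 4.4's auxiliary problem, so I would mimic that argument: test the div-free defect of $\bm v_h^c$ against $\nabla q$ for $q \in U_h$, use $(\bm v_h,\nabla q)=0$, and bound the leftover by $\|\bm v_h-\bm v_h^c\|\,|q|_1$ together with a $\sum_f h_f^{-2}p_f^4\|\llbracket\bm v_h\rrbracket\|_{0,f}^2$-type term coming from $\mathrm{div}_h$ of the jump. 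For the genuinely div-free remainder $\bm z_h \in \b X$, the continuous Poincaré inequality for $\b H_0(\mathrm{curl})$ on a simply-connected Lipschitz domain gives $\|\bm z_h\| \lesssim \|\mathrm{curl}\,\bm z_h\| = \|\mathrm{curl}\,\bm v_h^c\|$, and then $\|\mathrm{curl}\,\bm v_h^c\| \le \|\mathrm{curl}_h \bm v_h\| + \|\mathrm{curl}_h(\bm v_h - \bm v_h^c)\|$, where the first summand is controlled by Lemma 4.4 (in terms of $\|\mathrm{curl}^2_h \bm v_h\|$ and jump terms, all part of $|\bm v_h|_h$) and the second by Proposition \ref{p1}.

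Assembling: $\|\bm v_h\| \le \|\bm v_h - \bm v_h^c\| + \|\nabla\phi\| + \|\bm z_h\| \lesssim |\bm v_h|_h$, and since $\|\bm v_h\|_h^2 = \|\bm v_h\|^2 + |\bm v_h|_h^2$ this yields $\|\bm v_h\|_h \lesssim |\bm v_h|_h$. The main obstacle I anticipate is the treatment of the gradient component $\nabla\phi$: because $\bm v_h$ is only discretely divergence-free (orthogonal to $\nabla U_h$, not to all of $\nabla H^1_0(\Omega)$), one cannot simply say $(\bm v_h^c,\nabla\phi)$ is small — one must interpolate $\phi$ into $U_h$ and control the consistency error, and crucially track that the hidden constants stay independent of $h$ and $p$. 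This is where the careful bookkeeping of the $p_f$-powers in the norm $\|\cdot\|_h$ matters, and where I would lean on Lemma 4.4 and Proposition \ref{p1} (whose $p$-robustness is discussed in the preceding remark) to close the argument.
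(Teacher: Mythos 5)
Your overall skeleton --- conforming companion via Proposition 4.1, a Helmholtz-type splitting, a Poincar\'{e}/Friedrichs inequality on the genuinely divergence-free part, and Lemma 4.3 to convert $\|\mathrm{curl}_h\bm v_h\|$ into $|\bm v_h|_h$ --- matches the paper's, but you diverge at the decomposition step, and that is exactly where your argument has a genuine gap. You split $\bm v_h^c=\bm z_h+\nabla\phi$ by the \emph{continuous} $\b L^2$-orthogonal Helmholtz decomposition, so $\phi\in H^1_0(\Omega)$ is a general function, and you must estimate $\|\nabla\phi\|^2=(\bm v_h^c,\nabla\phi)=(\bm v_h^c-\bm v_h,\nabla\phi)+(\bm v_h,\nabla(\phi-q_h))$ for $q_h\in U_h$. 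The first term is harmless, but the second is bounded only by $\|\bm v_h\|\,\|\nabla(\phi-q_h)\|$: it multiplies the unknown quantity $\|\bm v_h\|$ by an interpolation error that is \emph{not} uniformly small, because $\phi$ has no regularity beyond $H^1$ (it solves $\Delta\phi=\mathrm{div}\,\bm v_h^c$ with a right-hand side only in $H^{-1}(\Omega)$). Elementwise integration by parts does not rescue this, since it produces element divergences and \emph{normal} jumps of $\bm v_h$, which the norm $\|\cdot\|_h$ does not control (it only sees tangential jumps $\llbracket\bm v_h\rrbracket$). Mimicking the auxiliary problem of Lemma 4.3 (which is what you call ``Lemma 4.4's auxiliary problem'') also does not transfer: there the decomposed object is $(\mathrm{curl}_h\bm v_h)^c$, whose divergence defect is expressible through the tangential jumps of $\bm v_h$; for $\bm v_h^c$ itself no such identity is available. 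So, as written, the gradient component cannot be absorbed, and this is precisely the step you flag as the ``main obstacle'' without resolving it.

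The paper circumvents this by a structurally different device: it takes the \emph{discrete} Helmholtz decomposition $\bm v_h^c=\bm v_h^0\oplus\nabla e_0$ with $\bm v_h^0\in\widetilde{\b X}_{h}$ and, crucially, $e_0\in U_h$, and then applies the Hodge operator $H$ of \eqref{2.5s} to $\bm v_h^0$. Since $H\bm v_h^0\in\b X$ is exactly divergence-free and $\bm v_h\in\b X_h$ is orthogonal to $\nabla U_h$, one has the \emph{exact} orthogonality $(H\bm v_h^0-\bm v_h,\nabla e_0)=0$: the gradient component is annihilated rather than estimated. What remains is $\|H\bm v_h^0-\bm v_h^0\|$, controlled by the Hodge/edge-interpolation estimates of Lemmas 4.1--4.2 in terms of $\|\mathrm{curl}\,\bm v_h^c\|$, plus $\|\bm v_h^c-\bm v_h\|$ from Proposition 4.1, and finally $\|H\bm v_h^0\|\lesssim\|\mathrm{curl}\,\bm v_h^c\|\lesssim\|\mathrm{curl}_h\bm v_h\|+\text{jump terms}$, which Lemma 4.3 converts into $|\bm v_h|_h$. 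Your treatment of $\bm z_h$ and of $\|\bm v_h-\bm v_h^c\|$ is correct and coincides with the paper's; to repair the proof you should replace the continuous Helmholtz decomposition by the discrete one together with the Hodge map.
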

\begin{proof} Let $\bm v_{h  }^c$ be defined as in Proposition 4.1 for any $\bm v_{h  }\in\b X_{h  }$. We have the Helmholtz decomposition
$\bm v_{h  }^c=\bm v_{h  }^0\bigoplus\nabla e_0$ with $\bm v_{h  }^0\in \widetilde{\b X}_{h  }$ and $ e_0\in U_{h  }$. 
 Due to the fact $(H\bm v_{h  }^0- {\bm v}_{h  },\nabla e_0)=0$ we have
\begin{align*}
 (H\bm v_{h  }^0- {\bm v}_{h  },H\bm v_{h  }^0- {\bm v}_{h  })
 = (H\bm v_{h  }^0- {\bm v}_{h  },H\bm v_{h  }^0-{\bm v}_{h  }^0+{\bm v}_{h  }^c- {\bm v}_{h  })
\end{align*}
which together with   (\ref{2.32}), Lemma 4.2 and (\ref{b1}) leads to
\begin{align}\label{3.16aa}
 \|H\bm v_{h  }^0- {\bm v}_{h  }\|
 &\lesssim \|H\bm v_{h  }^0-{\bm v}_{h  }^0\|+\|{\bm v}_{h  }^c- {\bm v}_{h  }\| \nonumber\\
 &\lesssim h^{r_0}p^{-1/2}\|\mathrm{curl}{\bm v}^c_{h  }\|_{}+ \left(\sum_{f\in \mathcal{E}_h}
    h_f^{-1}\|\llbracket \bm v_{h  }\rrbracket \|^2_{0,f}\right)^{1/2}\nonumber\\
 &\lesssim h^{r_0}p^{-1/2}(\sum_{K\in \mathcal T_{h}}\|\mathrm{curl}\bm v_{h  }\|_{0,K}^2)^{1/2}+hp^{-3}|\bm v_{h  }|_{h}.
\end{align}
This combines with  (\ref{2.32}) and (\ref{2.1aa}) to get
\begin{align}\label{2.22}
 \|\bm v_{h  }\|&\lesssim\| H\bm v^0_{h  }\|_{}+h^{r_0}p^{-1/2} (\sum_{K\in\mathcal T_{h}}\|\mathrm{curl}\bm v_{h  }\|_{0, K}^2)^{1/2}+hp^{-3}|\bm v_{h  }|_{h}\nonumber\\
 &\lesssim\|\mathrm{curl}{\bm v}^c_{h  }\|_{}+h^{r_0}p^{-1/2}|\bm v_{h  }|_{h}\nonumber\\
  &\lesssim (\sum_{K\in\mathcal T_{h}}\|\mathrm{curl}\bm v_{h  }\|_{0, K}^2)^{1/2}+hp^{-3}|\bm v_{h  }|_{h}+h^{r_0}p^{-1/2}|\bm v_{h  }|_{h}\nonumber\\
  &\lesssim |\bm v_{h  }|_{h}+h^{r_0}p^{-1/2}|\bm v_{h  }|_{h}.
\end{align}
This leads to the conclusion directly.
\end{proof}
The estimate \eqref{2.20} shows that $\widetilde a_h(\bm v,\bm v)\gtrsim |\bm v|_h^2$ for any $\bm v\in \b V_h$.
According to Lemma 4.4 we have the following.
\begin{corollary} It holds the a-priori estimate
\begin{eqnarray*}
\|(  T  -  T_h) \bm f\|_{h}\lesssim\inf\limits_{\bm{v}_h\in\mathbf V_h}\| {T} \bm f-\bm {v}_h\|_{h}+\inf\limits_{v_h\in  U_h}|S \bm f- v_h|_{1,\Omega}\rt 0.
 \end{eqnarray*}
\end{corollary}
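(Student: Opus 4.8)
The plan is to prove the Corollary by a standard Strang-type / Céa-type argument adapted to the mixed DG setting, using the coercivity of $\wt a_h$ on the discrete divergence-free space provided by Lemma 4.4. First I would observe that both $T\bm f$ and $T_h\bm f$ are (almost) characterized on the discrete kernel: since $\nabla U_h\subset\b V_h$, testing \eqref{p3} with $\bm v_h=\nabla q$ shows $(\nabla S_h\bm f,\nabla q)=(\bm f,\nabla q)$, so $S_h\bm f$ is the Galerkin projection of $S\bm f$ into $U_h$, giving $|S\bm f-S_h\bm f|_{1,\Omega}\lesssim\inf_{v_h\in U_h}|S\bm f-v_h|_{1,\Omega}$. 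Hence the $\nabla S_h\bm f$ contribution is controlled by the second infimum on the right-hand side. Next, restricting the test functions in \eqref{p3} to $\bm v_h\in\b X_h$ kills the term $(\nabla S_h\bm f,\bm v_h)$, so $T_h\bm f\in\b X_h$ satisfies $\wt a_h(T_h\bm f,\bm v_h)=(\bm f,\bm v_h)$ for all $\bm v_h\in\b X_h$.

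Then I would pick an arbitrary $\bm v_h\in\b X_h$ and estimate $\|T_h\bm f-\bm v_h\|_h$. By Lemma 4.4 (the discrete Poincaré inequality on $\b X_h$) and the remark after it that $\wt a_h(\bm w,\bm w)\gtrsim|\bm w|_h^2$ for $\bm w\in\b V_h$, we get $\|T_h\bm f-\bm v_h\|_h^2\lesssim|T_h\bm f-\bm v_h|_h^2\lesssim\wt a_h(T_h\bm f-\bm v_h,T_h\bm f-\bm v_h)$. Now I add and subtract $T\bm f$ and use the consistency: from \eqref{p2}, $\wt a_h(T\bm f,\bm v_h)+(\nabla S\bm f,\bm v_h)=(\bm f,\bm v_h)$ (the DG consistency being exactly the computation \eqref{2.9a} that built $a_h$), so $\wt a_h(T_h\bm f-T\bm f,\bm v_h)=(\nabla S\bm f,\bm v_h)$ for $\bm v_h\in\b X_h$. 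Since $S\bm f\in H^1_0(\Omega)$ and $\bm v_h\in\b X_h$ is discretely divergence-free against $U_h\supset$ (note $S\bm f\notin U_h$ in general, so this term does not vanish outright) — here I would instead route the $(\nabla S\bm f,\bm v_h)$ term through $(\nabla(S\bm f-v_h),\bm v_h)$ for any $v_h\in U_h$, which is bounded by $|S\bm f-v_h|_{1,\Omega}\|\bm v_h\|_h$. Thus $\wt a_h(T_h\bm f-\bm v_h,T_h\bm f-\bm v_h)=\wt a_h(T\bm f-\bm v_h,T_h\bm f-\bm v_h)+(\nabla S\bm f,T_h\bm f-\bm v_h)$, and using boundedness of $\wt a_h$ in $\|\cdot\|_h$ (stated just before Section 3) and the above, we obtain $\|T_h\bm f-\bm v_h\|_h\lesssim\|T\bm f-\bm v_h\|_h+\inf_{v_h\in U_h}|S\bm f-v_h|_{1,\Omega}$. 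A triangle inequality $\|(T-T_h)\bm f\|_h\le\|T\bm f-\bm v_h\|_h+\|T_h\bm f-\bm v_h\|_h$ and taking the infimum over $\bm v_h\in\b X_h$ finishes the bound, provided $\inf_{\bm v_h\in\b X_h}\|T\bm f-\bm v_h\|_h$ can be replaced by $\inf_{\bm v_h\in\b V_h}\|T\bm f-\bm v_h\|_h$.

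That last replacement is the step I expect to be the main obstacle: the infimum naturally lives over the constrained space $\b X_h$, but the stated result uses the unconstrained $\b V_h$. To bridge this, given any $\bm v_h\in\b V_h$ I would apply the Helmholtz-type splitting in $\b V_h$ (as in the proof of Lemma 4.4: write the conforming part $\bm v_h^c=\bm v_h^0\oplus\nabla e_0$) together with Proposition 4.1 and Lemma 4.3 to correct $\bm v_h$ into an element of $\b X_h$ at a cost controlled by $\|T\bm f-\bm v_h\|_h$, using that $T\bm f$ is itself (continuously) divergence-free so the discrete-divergence of $\bm v_h$ is small. Concretely, one shows $\inf_{\bm w_h\in\b X_h}\|T\bm f-\bm w_h\|_h\lesssim\inf_{\bm v_h\in\b V_h}\|T\bm f-\bm v_h\|_h$ by projecting away the spurious gradient component; this is routine but needs the $hp$-uniform constants from Proposition 4.1. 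Finally, convergence to $0$ follows because $T\bm f\in\b H_0(\mathrm{curl}^2,\Omega)$ is approximable in $\|\cdot\|_h$ by $\b V_h$ (density / Theorems 3.2–3.3 with the regularity built into $A_K$) and $S\bm f\in H^1_0(\Omega)$ is approximable in $|\cdot|_{1,\Omega}$ by $U_h$, both infima tending to zero under $h$-refinement and $p$-refinement.
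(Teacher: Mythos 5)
Your core argument coincides with the paper's: both proofs subtract \eqref{p3} from \eqref{p2} to obtain the Galerkin-orthogonality relation, restrict the test function to $\b X_{h}$ so that the coercivity $\wt a_h(\bm v,\bm v)\gtrsim\|\bm v\|_h^2$ on $\b X_{h}$ (Lemma 4.4 combined with the remark following it) applies, control the pressure contribution by $\inf_{v_h\in U_h}|S\bm f-v_h|_{1,\Omega}$ (your observation that $S_h\bm f$ is the $H^1_0$-Galerkin projection of $S\bm f$ is exactly why the paper can write $|(S-S_h)\bm f|_{1,\Omega}$ and still claim the infimum), and finish with a triangle inequality. The only genuine divergence is the step you yourself flag as the main obstacle: passing from an infimum over $\b X_{h}$ to one over $\b V_{h}$. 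The paper sidesteps this entirely by choosing one specific comparison element, namely $\bm w_h$, the solution of the unconstrained discrete problem \eqref{2.10aa} with $\bm w:=T\bm f$. Since $a_h(\cdot,\nabla q)=(\cdot,\nabla q)$ for $q\in U_{h}$ (every curl and jump term of a conforming gradient vanishes) and $T\bm f\in\b X$, this $\bm w_h$ automatically lies in $\b X_{h}$, while the quasi-optimality bound \eqref{2.22a} makes it optimal over all of $\b V_{h}$, so no correction argument is needed. Your proposed fix via the Helmholtz splitting and Proposition 4.1 is heavier than necessary: if one does want to correct an arbitrary $\bm v_h\in\b V_{h}$, it suffices to subtract $\nabla d_h$, where $d_h\in U_{h}$ solves $(\nabla d_h,\nabla q)=(\bm v_h,\nabla q)=(\bm v_h-T\bm f,\nabla q)$ for all $q\in U_{h}$; then $\bm v_h-\nabla d_h\in\b X_{h}$ and, because $\nabla d_h$ is a conforming gradient with vanishing elementwise curl and vanishing tangential jumps, $\|\nabla d_h\|_h=\|\nabla d_h\|\le\|T\bm f-\bm v_h\|$, so the constrained and unconstrained infima are equivalent with an absolute constant. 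Either route closes the argument; yours just needs that last computation written out rather than deferred to ``routine''.
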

\begin{proof}
The combination of \eqref{p2} and \eqref{p3} leads to
\begin{align*}
\wt a_h((T_{ }-T_{h  })\bm f,\bm{v}_{h  })+( \nabla (S-S_h)\bm f, \bm v_{h  })&=0,\quad\forall \bm{v}_{h  }\in \b V_{h  }.
\end{align*}
It follows that
\begin{align*}
\wt a_h(\bm v_h-T_{h  }\bm f,\bm v_h-T_{h  }\bm f)=\wt a_h(\bm v_h-T\bm f,\bm v_h-T_{h  }\bm f)-( \nabla (S-S_h)\bm f, \bm v_h-T_{h  }\bm f),\quad\forall \bm{v}_{h  }\in \b X_{h  }.
\end{align*}
Hence by  the   discrete poinc\'{a}re inequality \eqref{2.22} we have
\begin{align*}
\|\bm v_h-T_{h  }\bm f\|_h\lesssim |T\bm f-\bm v_h|_h+|(S-S_h)\bm f|_{1,\Omega},\quad\forall \bm{v}_{h  }\in \b X_{h  }.
\end{align*}
 {This together with the triangular inequality infers that
\begin{align*}
\|T_{ }\bm f-T_{h  }\bm f\|_h\lesssim \|T\bm f-\bm w_h\|_h+|(S-S_h)\bm f|_{1,\Omega}
\end{align*}
where $\bm w_h$ is as in \eqref{2.22a} with $\bm w:=T\bm f$.  Then the proof is finished.}
\end{proof}
 We can prove the following $hp$-version error estimates for of  the IPDG solution.
\begin{theorem}
{Assume one of the following regularities for the equations \eqref{p2}   is valid:}
\begin{align}\label{p4}
&\| T\bm f\|_{4,\Omega} \lesssim \|\bm f\|,\\
&\| T\bm f\|_{r_0,\Omega}+\|\mathrm{curl} T\bm f\|_{1+r_1,\Omega}+\|\mathrm{curl}^3 T\bm f\|_{1/2+\delta,\Omega}\lesssim \|\bm f\|, \text{for } r_0,r_1\in(1/2,1]\text{and }\delta>0.\label{p5}
\end{align}
Let $\{\mathcal T_{h}\}$ be a family of  conforming  meshes.
Let $\bm g\in \b L^2(\Omega)$ with $\mathrm{div} \bm g=0$, $T\bm g\in\b H_0(\mathrm{curl}^2,\Omega)$ and $A_K(T\bm g)<\infty$ for any $K\in \mathcal T_h$ then
{\begin{eqnarray}\label{d1}
\|T\bm g -T_{h  }\bm g\|\lesssim \varepsilon(h,p) h^{\min(p+1,s)-2}p^{3-s}A_\Omega(T\bm g)
\end{eqnarray}}
where $\varepsilon(h,p)=h^{r_0}p^{-1/2}$ under Assumption \eqref{p4} and $\varepsilon(h,p)=h^{\min(r_0,r_1)}$ under Assumption \eqref{p5}.
\end{theorem}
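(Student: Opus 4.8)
The plan is to estimate $\|T\bm g - T_h\bm g\|$ in the $\b L^2$-norm by a duality (Aubin--Nitsche) argument, using Corollary 4.1 to control the $\|\cdot\|_h$-error and gaining an extra factor $\varepsilon(h,p)$ from the regularity of the dual problem. First I would introduce the dual problem: for the error $\bm e := T\bm g - T_h\bm g$, consider $(T\bm e, S\bm e)$ solving \eqref{p2} with right-hand side $\bm e$ (note $\mathrm{div}\,\bm e = 0$ in the appropriate discrete sense since $\bm g$ and hence $T\bm g$, $T_h\bm g$ are div-free against $U_h$). Testing the dual equation with $\bm v = \bm e$ and using the consistency/Galerkin orthogonality relations for both $T$ and $T_h$ (the relation $\wt a_h((T-T_h)\bm f,\bm v_h) + (\nabla(S-S_h)\bm f,\bm v_h)=0$ from the proof of Corollary 4.1, together with the corresponding identity for the dual variable), I would write $\|\bm e\|^2 = \wt a_h(\bm e, T\bm e) + (\text{pressure terms}) = \wt a_h(\bm e, T\bm e - \bm\phi_h) + \cdots$ for any $\bm\phi_h \in \b X_h$, plus the term coming from $(\bm e,\bm e)$ which I split off since $\wt a_h$ omits the mass term. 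Boundedness of $\wt a_h$ in $\|\cdot\|_h$ then gives $\|\bm e\|^2 \lesssim \|\bm e\|_h \cdot \inf_{\bm\phi_h}\|T\bm e - \bm\phi_h\|_h + (\text{lower order})$.

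Next I would bound the dual approximation term $\inf_{\bm\phi_h}\|T\bm e - \bm\phi_h\|_h$. Under Assumption \eqref{p4}, $T\bm e \in \b H^4$ but we only want to extract the \emph{low-order} part of the rate, namely a power $\varepsilon(h,p) = h^{r_0}p^{-1/2}$; here I expect to invoke the interpolation estimate of Theorem 3.3 or the $C^0$-conforming interpolation of Theorem 3.3/\eqref{i} with $s = 4$, but taking only enough regularity to produce $h^{r_0}p^{-1/2}$ — this is where the subtlety of matching the $p$-powers lies. Actually, the cleanest route: under \eqref{p4} use the edge-element interpolation estimate \eqref{b1} of Lemma 4.1 (which carries the characteristic $h^{r_0}p^{-1/2}$) combined with the $\b H(\mathrm{curl}^2)$-regularity to control $\|T\bm e - \bm\phi_h\|_h \lesssim h^{r_0}p^{-1/2}\|\bm e\|$. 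Under Assumption \eqref{p5}, $T\bm e$ has the weaker regularity $r_0$ in $\b L^2$, $1+r_1$ for $\mathrm{curl}$, and $1/2+\delta$ for $\mathrm{curl}^3$; feeding these Sobolev indices face-by-face into the pieces of the $\|\cdot\|_h$-norm (mimicking the estimates of $I_1,I_2,I_3$ in the proof of Theorem 3.3, but with the dual regularities) yields $\|T\bm e - \bm\phi_h\|_h \lesssim h^{\min(r_0,r_1)}\|\bm e\|$.

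Then I would combine: $\|\bm e\|^2 \lesssim \|\bm e\|_h\cdot \varepsilon(h,p)\|\bm e\| + (\text{lower order in }\|\bm e\|_h\text{ times }\|\bm e\|)$, whence $\|\bm e\| \lesssim \varepsilon(h,p)\|\bm e\|_h$. Finally, apply Corollary 4.1 together with the primal interpolation estimate of Theorem 3.3 (conforming case) with $\bm w = T\bm g$, giving $\|\bm e\|_h \lesssim h^{\min(p+1,s)-2}p^{3-s}A_\Omega(T\bm g)$ — here I also need $\inf_{v_h\in U_h}|S\bm g - v_h|_{1,\Omega}$ to be of at least this order, which follows from standard $hp$ approximation in $U_h$ (polynomial degree $p+1$) and the regularity of $S\bm g$ implied by \eqref{p4}/\eqref{p5}, since $S\bm g = 0$ when $\bm g \in \b X$ and in general $S\bm g$ inherits regularity from the div of $\bm g$ which is zero. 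Substituting yields \eqref{d1}.

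The main obstacle I anticipate is the duality bookkeeping for the pressure/Lagrange-multiplier terms: one must verify that the cross terms $(\nabla(S-S_h)\bm g, \cdot)$ and the dual pressure contributions either vanish or are absorbed, which relies crucially on $\mathrm{div}\,\bm g = 0$ (so $S\bm g = S_h\bm g = 0$ as noted after \eqref{p3}) and on the discrete div-free property of $\bm e$ against $U_h$. A secondary technical point is extracting precisely the factor $h^{r_0}p^{-1/2}$ (not a higher power of $h$) from the dual problem under \eqref{p4} without losing the optimal $h$-rate of the primal estimate — i.e. correctly splitting "regularity used for the rate" versus "regularity used for the duality gain," which is exactly the delicate $p$-power accounting that distinguishes the conforming and nonconforming cases throughout this paper.
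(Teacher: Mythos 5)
Your high-level plan---gain the factor $\varepsilon(h,p)$ by a duality argument and then invoke Corollary 4.1 with the conforming interpolation estimate---matches the spirit of the paper's proof, and you correctly flag the pressure/div-free bookkeeping as the crux. But the proposal does not resolve that crux, and the direct Aubin--Nitsche argument against $\bm e:=T\bm g-T_{h}\bm g$ as you set it up would fail. The error $\bm e$ is neither conforming nor divergence-free: $T_{h}\bm g$ is discontinuous and only orthogonal to $\nabla U_{h}$, so $\mathrm{div}\,\bm e$ is a genuine distribution involving the \emph{normal} jumps of $T_{h}\bm g$, which are not controlled by the DG norm (only tangential jumps enter $\|\cdot\|_h$). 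Consequently the dual pressure $S\bm e$ does not vanish, and the cross term $(\nabla S\bm e, T_{h}\bm g)$ can at best be rewritten as $(\nabla(S\bm e-q_h),T_{h}\bm g-T\bm g)$ for $q_h\in U_{h}$; without extra regularity of $S\bm e$ (which would require $\mathrm{div}\,T_{h}\bm g$ to lie in a Sobolev space of negative order strictly better than $H^{-1}$) this term is merely $O(\|\bm e\|_h\|\bm e\|)$ with a constant that is not small, and the whole argument collapses. A second gap: under Assumption \eqref{p4} you propose to bound $\inf_{\bm\phi_h}\|T\bm e-\bm\phi_h\|_h\lesssim h^{r_0}p^{-1/2}\|\bm e\|$ via Lemma 4.1, but \eqref{b1} is only an $\b L^2$ estimate for the edge interpolant; it gives no control of the $\mathrm{curl}^2$ volume terms or of the $\{\mathrm{curl}^2\cdot\}$, $\{\mathrm{curl}^3\cdot\}$ face terms appearing in $\|\cdot\|_h$.

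The paper's proof avoids both problems by never dualizing against $\bm e$ itself. It writes $\|\bm e\|^2=\bigl(\bm e,\,(T\bm g-P_{h}T\bm g)+(P_{h}T\bm g-(T_{h}\bm g)^c)+((T_{h}\bm g)^c-T_{h}\bm g)\bigr)$, where $P_{h}$ is the conforming interpolant and $(T_{h}\bm g)^c$ the conforming reconstruction of Proposition 4.1; the first and third pieces are small in $\b L^2$ directly. The middle piece is Helmholtz-decomposed as $\bm w_0^c\oplus\nabla e$ with $\bm w_0^c\in\b X_{h}$, and the gradient part pairs to zero with $\bm e$ because $T\bm g$ is exactly and $T_{h}\bm g$ discretely divergence-free. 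Only then does the duality enter: $\bm w_0^c$ is replaced by its Hodge lift $H\bm w_0^c\in\b X$, which is \emph{exactly} divergence-free, so its dual pressure vanishes and one may legitimately write $(\bm e,H\bm w_0^c)=\widetilde a_h(\bm e,\,TH\bm w_0^c-\Pi_h^cTH\bm w_0^c)$ and exploit the regularity \eqref{p4} or \eqref{p5} of the dual solution. Note also that under \eqref{p4} the factor $\varepsilon(h,p)=h^{r_0}p^{-1/2}$ comes in part from the Hodge gap $\|\bm w_0^c-H\bm w_0^c\|\lesssim h^{r_0}p^{-1/2}\|\mathrm{curl}\,\bm w_0^c\|$ (Lemmas 4.1--4.2), not from the dual approximation alone. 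This decomposition--Hodge-lift--dualize sequence is the missing idea in your proposal.
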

\begin{proof}
For $\bm g\in L^2(\Omega)$ with $\mathrm{div}\bm g=0$ we have $S_{h  }\bm g=S\bm g=0$. We derive 
\begin{align*}
\|T\bm g-T_{h  }\bm g\|^2&=\left(T\bm g-T_{h  }\bm g,  T\bm g-P_{h  }T\bm g+P_{h  }T\bm g-\bm (T_{h  }\bm g)^c+(T_{h  }\bm g)^c-T_{h  }\bm g\right)
\end{align*}
Using the Helmholtz decomposition
$P_{h  }T\bm g-\bm (T_{h  }\bm g)^c=\bm w_0^c\bigoplus\nabla e$ with $\bm w_0^c\in \b X_{h  }$ and $ e\in U_{h  }$, we have from \eqref{2.32} in Proposition 4.1
\begin{align}\label{2.41ss}
\|T\bm g-T_{h  }\bm g\|^2\lesssim \left(\|T\bm g-P_{h  }T\bm g\|+\|(T_{h  }\bm g)^c-T_{h  }\bm g\|\right)\|T\bm g-T_{h  }\bm g\|+|(T\bm g-T_{h  }\bm g,\bm w^c_0)|.
\end{align}
Note that $S_{h  }\bm g=S\bm g=S_{h  }H\bm w^c_0=SH\bm w^c_0=0$. 
Let $\Pi_{h  }^c$ be the orthogonal projection onto {${\b V}_{h  }\cap \b H(\mathrm{curl},\Omega)$} such that for any $\bm v\in\b H_0(\mathrm{curl}^2,\Omega)$
\begin{align}
a_h(\bm v-\Pi_{h  }^c\bm v,\bm q)=0,~\forall\bm q\in {\b V}_{h  }\cap \b H(\mathrm{curl},\Omega).
\end{align}
If Assumption \eqref{p4} holds then by \eqref{i}  
\begin{align*}
|T H\bm w^c_0-\Pi_h^cT H\bm w^c_0|_h\lesssim  \|T H\bm w^c_0-P_hT H\bm w^c_0\|_h\lesssim h^{}p^{-1}\| T\bm H\bm w^c_0\|_{4,\Omega}\lesssim \varepsilon(h,p)\| \bm H\bm w^c_0\|_{}.
\end{align*}
If Assumption \eqref{p5} holds then the similar argument as those in Theorem 5.6 of \cite{chen} shows
\begin{align*}
|T H\bm w^c_0-\Pi_h^cT H\bm w^c_0|_h&\lesssim h^{\min(r_0,r_1)}(\| T\bm H\bm w^c_0\|_{r_0,\Omega}+\|\mathrm{curl} T\bm H\bm w^c_0\|_{1+r_1,\Omega}+\|\mathrm{curl}^3 T\bm H\bm w^c_0\|_{1/2+\delta,\Omega})\\
&\lesssim \varepsilon(h,p)\| \bm H\bm w^c_0\|_{}.
\end{align*}
Using Lemmas 4.2 and 4.1, the third term at the right-side hand of \eqref{2.41ss} is estimated as follows:
\begin{align}\label{2.45}
|(T\bm g-T_{h  }\bm g,\bm w^c_0)|&\le |(T\bm g-T_{h  }\bm g,\bm w^c_0- H\bm w^c_0)|+|(T\bm g-T_{h  }\bm g, H\bm w^c_0)|\nonumber\\
&\lesssim h^{r_0}p^{-1/2}\|\mathrm{curl}\bm w_0^c\|\|T\bm g-T_{h  }\bm g\|+|\widetilde a_h(T\bm g-T_{h  }\bm g, T H\bm w^c_0-\Pi_h^cT H\bm w^c_0)|\nonumber\\
&\lesssim h^{r_0}p^{-1/2}\|P_{h  }T\bm g-\bm (T_{h  }\bm g)^c\|_{\mathrm{curl}}\|T\bm g-T_{h  }\bm g\|+|T\bm g-T_{h  }\bm g|_h\varepsilon(h,p)\|H\bm w^c_0\|
\end{align}
where $\|H\bm w^c_0\|$ can be estimated by Lemmas 4.2 and 4.1:
\begin{align*}
\|H\bm w^c_0\|&\lesssim \|\bm w^c_0\|+h^{r_0}p^{-1/2}\|\mathrm{curl}(P_{h  }T\bm g-\bm (T_{h  }\bm g)^c)\|\\
&\lesssim \|(P_{h  }-I)T\bm g+T\bm g-T_h\bm g+T_h\bm g-\bm (T_{h  }\bm g)^c\|+h^{r_0}p^{-1/2}\|P_{h  }T\bm g-\bm (T_{h  }\bm g)^c\|_{\mathrm{curl}}.
\end{align*}
The substitution  of (\ref{2.45}) into the estimate (\ref{2.41ss}) gives
\begin{align}
\|T\bm g-T_{h  }\bm g\|&\lesssim h^{r_0}p^{-1/2}\|P_{h  }T\bm g-\bm (T_{h  }\bm g)^c\|_{\mathrm{curl}}+|T\bm g-T_{h  }\bm g|_h\varepsilon(h,p)+\|P_hT\bm g-T\bm g\|+\|T_h\bm g-(T_h\bm g)^c\|\nonumber\\
&\lesssim  \|T\bm g-T_{h  }\bm g\|_h\varepsilon(h,p)+\|P_hT\bm g-T\bm g\|+\|T_h\bm g-(T_h\bm g)^c\|\nonumber\\
&\quad +h^{r_0}p^{-1/2}(\|P_{h  }T\bm g-T_{ }\bm g\|_{h}+\|\mathrm{curl}_h(T_h\bm g-(T_h\bm g)^c)\|)\text{by {\eqref{2.1aa}}}\nonumber\\
&\lesssim\varepsilon(h,p)h^{\min(p+1,s)-2}p^{3-s}A_{\Omega}(T\bm g)+h^{r_0}\left(h_f^{-1}\|\llbracket  T_{h  } \bm g\rrbracket \|^2_{0,f}\right)^{1/2}\text{by \eqref{2.32}, Cor. 4.1 $\&$ Thm. 3.3}\nonumber\\
&\lesssim\varepsilon(h,p)h^{\min(p+1,s)-2}p^{3-s}A_{\Omega}(T\bm g)+h^{r_0+1}p^{-3}\|T_{h  } \bm g-T\bm g \|_{h}.
 \end{align}   
Then the  estimate   \eqref{d1} is obtained by using  Colloary 4.1 and Theorem 3.3.
\end{proof}


Let ${\mathcal{H}}$ be a sequence of  $(h,p)$ with $h/p$ converging to 0.
\begin{lem}(Discrete compactness property) Any sequence $\{\bm v_{h}\}_{(h,p)\in\mathcal{H}}$ with $\bm v_{h}\in \x_{h  }$ that is uniformly bounded w.r.t $\|\cdot\|_{h}$ contains a subsequence that converges strongly in $\b L^2(\Omega)$.
\end{lem}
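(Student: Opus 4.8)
The plan is to exploit the discrete Helmholtz/Hodge decomposition together with the discrete Poincaré inequality of Lemma 4.4 and the compactness of the continuous space $\b X$. Let $\{\bm v_h\}$ be a sequence in $\b X_h$ that is uniformly bounded in $\|\cdot\|_h$. By Lemma 4.4 the seminorm $|\bm v_h|_h$ is also uniformly bounded, hence so are $\|\mathrm{curl}^2 \bm v_h\|_{0,h}$, the jump terms, and (by Lemma 4.3, i.e.\ inequality \eqref{2.1aa}) the broken curl $\|\mathrm{curl}_h \bm v_h\|$. First I would pass to the conforming reconstruction $\bm v_h^c \in \b H_0(\mathrm{curl},\Omega)\cap \b V_h$ from Proposition 4.1: since $\sum_f h_f^{-1}\|\llbracket \bm v_h\rrbracket\|_{0,f}^2 \lesssim h^2 p^{-6}|\bm v_h|_h^2 \to 0$ along $\mathcal H$, we get $h_f^{-2}\|\bm v_h - \bm v_h^c\|^2 + \|\mathrm{curl}_h(\bm v_h-\bm v_h^c)\|^2 \to 0$; in particular $\|\bm v_h - \bm v_h^c\| \to 0$, so it suffices to extract a convergent subsequence from $\{\bm v_h^c\}$ in $\b L^2(\Omega)$, and moreover $\{\bm v_h^c\}$ is bounded in $\b H(\mathrm{curl},\Omega)$.

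Next I would split $\bm v_h^c$ by the discrete Helmholtz decomposition $\bm v_h^c = \bm v_h^0 \oplus \nabla e_0$ with $\bm v_h^0 \in \widetilde{\b X}_h$ and $e_0 \in U_h$, exactly as in the proof of Lemma 4.4. The gradient part contributes nothing modulo the continuous divergence-free constraint: since $\bm v_h \in \b X_h$ means $(\bm v_h,\nabla q)=0$ for all $q\in U_h$, the inner product of $\nabla e_0$ against $\bm v_h$ is controlled by $\|\bm v_h - \bm v_h^c\|\to 0$, so the gradient part is asymptotically negligible in $\b L^2$. Then I would apply the Hodge operator: set $\bm z_h := H\bm v_h^0 \in \b X$, which satisfies $\mathrm{curl}\, \bm z_h = \mathrm{curl}\, \bm v_h^0 = \mathrm{curl}_h \bm v_h^c$ and, by Lemma 4.2, $\|\bm z_h\|_{r_0} \lesssim \|\mathrm{curl}\, \bm v_h^0\| \lesssim \|\mathrm{curl}_h \bm v_h^c\|$, which is uniformly bounded. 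Since $\b H^{r_0}(\Omega) \hookrightarrow \b L^2(\Omega)$ compactly for $r_0 > 1/2$, the sequence $\{\bm z_h\}$ has a subsequence converging strongly in $\b L^2(\Omega)$.

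It then remains to close the gap between $\bm z_h = H\bm v_h^0$ and $\bm v_h$. From the estimate \eqref{3.16aa} derived in the proof of Lemma 4.4 we have $\|H\bm v_h^0 - \bm v_h\| \lesssim h^{r_0}p^{-1/2}(\sum_K\|\mathrm{curl}\,\bm v_h\|_{0,K}^2)^{1/2} + hp^{-3}|\bm v_h|_h$, and both terms on the right tend to $0$ along $\mathcal H$ because $h/p \to 0$ and the factors in parentheses are uniformly bounded. Combining the three pieces — $\|\bm v_h - \bm v_h^c\|\to 0$, the gradient part negligible, and $\|\bm v_h^c - \text{(solenoidal part)}\| $ controlled, together with strong $\b L^2$ convergence of $\{\bm z_h\}$ along a subsequence — we conclude that a subsequence of $\{\bm v_h\}$ converges strongly in $\b L^2(\Omega)$. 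The main obstacle is organizing the chain of approximations so that every error term genuinely uses $h/p\to 0$ (rather than just $h\to 0$), since the discrete Poincaré constant and the conforming-reconstruction bounds all carry $p$-powers; the essential compactness input is simply the compact embedding $\b H^{r_0}\hookrightarrow\b L^2$ applied to the Hodge-mapped solenoidal part.
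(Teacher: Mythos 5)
Your proposal is correct and follows essentially the same route as the paper: both pass to the conforming reconstruction $\bm v_h^c$, take the discrete Helmholtz decomposition, apply the Hodge operator $H$ to land in $\b X$, use the estimate \eqref{3.16aa} to show $\|H\bm v_h^0-\bm v_h\|\to 0$, and then invoke compactness of the divergence-free continuous space (the paper cites the compact embedding $\b X\hookrightarrow \b L^2(\Omega)$ directly, while you reach the same conclusion via $\|H\bm v_h^0\|_{r_0}\lesssim\|\mathrm{curl}\,\bm v_h^0\|$ and $\b H^{r_0}\hookrightarrow\b L^2$, which is the same ingredient). Your separate discussion of the gradient part is harmless but redundant, since \eqref{3.16aa} already absorbs it through the orthogonality $(H\bm v_h^0-\bm v_h,\nabla e_0)=0$.
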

\begin{proof}

  Let $\{\bm v_{h  }\}_{(h,p)\in\mathcal{H}}$  with $\|\bm v_{h  }\|_{h}<M$ for a positive constant $M$. It is trivial to  assume that the sequece $(h_i,p_i)\in\mathcal{H}$ converges to zero as $i\to\infty$. According to (\ref{3.16aa}), $\|H{\bm v}_{h_i}^0- {\bm v}_{h_i}\|\to 0$ as $i\to\infty$.
  Note that  by \eqref{2.32} we deduce
  \begin{align*}
    \|\mathrm{curl}H{\bm v}_{h_i}^0\|=\|\mathrm{curl}{\bm v}_{h_i}^c\|\lesssim \|{\bm v}_{h_i}\|_{h_i}.
  \end{align*}
This means that  $\{H{\bm v}_{h_i}^0\}$ is bounded in $\b H(\mathrm{curl},\Omega)$. Since $\b X$ is compactly imbedded into $\l2$, there is a subsequence of $\{H{\bm v}_{h_i}^0\}$ converging to some $\bm v_0$ in $\l2$. Hence   a subsequence of $\{{\bm v}_{h_i}\}$ will converge to  $\bm v_0$ in $\l2$ as well.
\end{proof}
The following uniform convergence can be derived from the discrete compactness property of $\b X_{h  }$.
\begin{theorem} 
There holds the uniform convergence
\begin{eqnarray*}
& \|{T}_{h  }-{T}\|_{\b L^2(\Omega)\to \b L^2(\Omega)}\rightarrow0,~h\rightarrow0,p\rightarrow\infty.
\end{eqnarray*}
\end{theorem}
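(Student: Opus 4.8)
The plan is to run the standard argument that discrete compactness plus consistency yields uniform (norm) convergence of the discrete solution operators. I would argue by contradiction. Suppose the convergence fails; then there is $\varepsilon_0>0$, a sequence $(h_i,p_i)\in\mathcal H$ tending to zero (in the sense $h_i/p_i\to0$), and functions $\bm g_i\in\mathbf L^2(\Omega)$ with $\|\bm g_i\|=1$ such that $\|(T-T_{h_i})\bm g_i\|\ge\varepsilon_0$ for all $i$. First I would reduce to the divergence-free case: writing $\bm g_i=\bm g_i^0+\nabla\psi_i$ with $\bm g_i^0\in\mathbf X$ (Helmholtz decomposition in $\mathbf L^2$), one has $T\nabla\psi_i$ and $T_{h_i}\nabla\psi_i$ controlled through the pressure variables $S\bm g_i$, $S_{h_i}\bm g_i$ and the div-free constraint, so it suffices to bound $\|(T-T_{h_i})\bm g_i^0\|$ with $\bm g_i^0\in\mathbf X$, $\|\bm g_i^0\|\le1$, $\mathrm{div}\,\bm g_i^0=0$.

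Next I would extract convergent subsequences. Since $\mathbf X$ embeds compactly in $\mathbf L^2(\Omega)$, after passing to a subsequence $\bm g_i^0\to\bm g^*$ strongly in $\mathbf L^2(\Omega)$, with $\bm g^*\in\mathbf X$. The key coercivity/Poincaré facts established earlier — namely Theorem 3.1, the discrete Poincaré inequality (Lemma 4.4) and the observation $\widetilde a_h(\bm v,\bm v)\gtrsim|\bm v|_h^2$ — give that $\|T_{h_i}\bm g_i^0\|_{h_i}$ is uniformly bounded, and $T_{h_i}\bm g_i^0\in\mathbf X_{h_i}$ (this is exactly the discrete div-free constraint in \eqref{p3}). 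Therefore by the discrete compactness property (Lemma 4.5), a further subsequence of $\{T_{h_i}\bm g_i^0\}$ converges strongly in $\mathbf L^2(\Omega)$ to some limit $\bm u^*$. It remains to identify $\bm u^*$ with $T\bm g^*$ and to show $T_{h_i}\bm g_i^0\to T\bm g^*$ in fact along the original reduction; this would contradict $\|(T-T_{h_i})\bm g_i^0\|\ge\varepsilon_0$ after also invoking $T\bm g_i^0\to T\bm g^*$ (continuity of $T$ on $\mathbf L^2$).

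To identify the limit, I would pass to the limit in the discrete weak formulation \eqref{p3}: for a fixed smooth test function $\bm\varphi\in\mathbf H_0(\mathrm{curl}^2,\Omega)$, use a conforming approximation $\bm\varphi_{h_i}\in\mathbf V_{h_i}$ (e.g. $P_{h_i}\bm\varphi$ or $\Pi_p\bm\varphi$) with $\|\bm\varphi-\bm\varphi_{h_i}\|_{h_i}\to0$; then $\widetilde a_{h_i}(T_{h_i}\bm g_i^0,\bm\varphi_{h_i})\to(\mathrm{curl}^2\bm u^*,\mathrm{curl}^2\bm\varphi)$ using boundedness of $\widetilde a_{h_i}$ in the DG norm, the $\mathbf L^2$-convergence of $T_{h_i}\bm g_i^0$, and the vanishing of jump/consistency terms as the mesh is refined; the right side $(\bm g_i^0,\bm\varphi_{h_i})\to(\bm g^*,\bm\varphi)$, and the pressure term drops since $S_{h_i}\bm g_i^0=0$. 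Together with $\mathrm{div}\,\bm u^*=0$ (pass to the limit in the constraint against $\nabla q$, $q\in C_0^\infty$, using that $U_{h_i}$ fills $H_0^1$), this shows $\bm u^*=T\bm g^*$. The main obstacle is precisely this passage to the limit in the nonconforming bilinear form: one must show that the DG consistency terms — the face integrals involving $\{\mathrm{curl}^3\cdot\}$, $\{\mathrm{curl}^2\cdot\}$ and the penalty terms — tend to zero when tested against the conforming $\bm\varphi_{h_i}$, which requires the uniform DG-norm bound on $T_{h_i}\bm g_i^0$ together with the interpolation estimates of Theorem 3.3 / Lemma 2.1 to control $\|\bm\varphi-\bm\varphi_{h_i}\|_{h_i}$ and the jumps $\llbracket\bm\varphi_{h_i}\rrbracket$, $\llbracket\mathrm{curl}\,\bm\varphi_{h_i}\rrbracket$. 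Once these consistency estimates are in hand, the contradiction closes and the uniform convergence follows.
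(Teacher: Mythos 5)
Your overall strategy (argue by contradiction, use the discrete compactness of $\b X_{h}$ to extract an $\b L^2$-convergent subsequence of $\{T_{h_i}\bm g_i\}$, then identify the limit) is a legitimate alternative to the paper's route, but as written it has two genuine gaps. First, the extraction of a \emph{strong} $\b L^2$-limit of the data is unjustified: the divergence-free parts $\bm g_i^0$ are only bounded in $\b L^2(\Omega)$, whereas the compact embedding $\b X\hookrightarrow\b L^2(\Omega)$ applies to sequences bounded in the $\b H(\mathrm{curl})$-norm. A bounded $\b L^2$-sequence need not have a strongly convergent subsequence; you must work with weak convergence $\bm g_i^0\rightharpoonup\bm g^*$ (which does suffice: compactness of $T$ turns it into strong convergence of $T\bm g_i^0$, and weak convergence of the data is enough on the right-hand side of the discrete problem when tested against strongly convergent $\bm\varphi_{h_i}$). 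Second, the step you yourself flag as ``the main obstacle'' --- passing to the limit in the nonconforming form $\wt a_{h_i}$, i.e.\ showing that the weak limit $\bm u^*$ lies in $\b H_0(\mathrm{curl}^2,\Omega)$, that the broken $\mathrm{curl}^2$'s converge to the distributional one, and that all face/penalty terms vanish --- is the technical heart of your route and is not carried out. Without it the identification $\bm u^*=T\bm g^*$ is not established and the contradiction does not close. The reduction to divergence-free data is also asserted rather than proved: $T_{h}\nabla\psi\neq 0$ in general (the discrete constraint only kills gradients from $U_h$), and controlling it uniformly over $\|\nabla\psi\|\le 1$ already requires the Hodge-operator estimates of Lemma 4.4, not merely ``the pressure variables.''

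The paper avoids all of this. It first observes that Corollary 4.1 together with the approximation estimate \eqref{2.22sa} and density of $\cup U_h$ in $H_0^1(\Omega)$ already gives \emph{pointwise} convergence $\|(T-T_h)\bm f\|\to 0$ for every fixed $\bm f\in\b L^2(\Omega)$; then the discrete compactness of $\b X_h$ (Lemma 4.5) shows that $\cup_{(h,p)}T_hB$ is relatively compact, i.e.\ $\{T_h\}$ is collectively compact; finally, since $T$ and $T_h$ are self-adjoint on $\b L^2(\Omega)$, norm convergence follows from the standard operator-theoretic result (Chatelin), essentially via $\|T_h-T\|^2=\|(T_h-T)^2\|\le\|(T_h-T)T_h\|+\|(T_h-T)T\|\to 0$. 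No limit identification in the weak formulation is needed. If you want to keep your contradiction framework, the cleanest repair is to import exactly this self-adjointness trick to close the argument, rather than attempting the DG consistency limit.
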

\begin{proof}
Since   $\cup_{(h,p)\in\mathcal{H}}  U_{h  }$ are dense in   $  H^1_0(\Omega)$, respectively, we deduce from {Corollary 4.1 and (\ref{2.22sa})}: for any $\bm f\in \b L^2(\Omega)$
\begin{eqnarray}\label{2}
\|(  T  -  T_{h  }) {\bm f}\|_{}\le\|(  T  -  T_{h  }) {\bm f}\|_{h}\rt 0.
 \end{eqnarray}
 That is,  $  T_{h  }$ converges to $  T$ pointwisely in {$\b L^2(\Omega)$}. Thanks to  the discrete compactness
 of $\b X_{h  }$, $\cup_{(h,p)\in\mathcal{H}}T_{h  }B$ is a relatively compact set in $\b L^2(\Omega)$ where $B$ is the unit ball in $\b L^2(\Omega)$. In fact, Let us choose any sequence $\{\bm v_{h  }\}_{(h,p)\in\mathcal{H}}\subset B$. Note that $T$ is compact from $\b X$ to $\b L^2(\Omega)$ then $\{T\bm v_{h  }\}_{h\in\mathcal{H}}$ is a relatively compact set in $\b L^2(\Omega)$. Hence it holds the collectively compact convergence ${T}_{h  }\rightarrow {T}~in~\b  L^2(\Omega)\text{as}~h\rightarrow0,p\rightarrow \infty$. Noting $T,T_{h  }: \b L^2(\Omega) \rightarrow \b L^2(\Omega)$ are self-adjoint, due to Proposition 3.7 or Table 3.1
in \cite{chatelin}  the assertion is valid.
\end{proof}

\begin{remark}
The unform convergence in Theorem 4.2   is   valid on the mild polygonal $(d = 2)$ or polyhedral $(d = 3)$ meshes, provided that $\inf_{v\in U_h}|S\bm f- v|_1\to 0$ $(h\to0)$ in Corollary 4.1.  The fact $\inf_{v\in U_h}|S\bm f- v|_1\to 0$ cannot be guaranteed {when the edge(or face) number of polygonal (or polyhedral) elements is so large  that their nodes (or faces) is shared by few elements}.
\end{remark}

Using the  spectral approximation theory in \cite{babuska} we are in a position to give the estimate for IPDG eigenvalues.
\begin{theorem}
Let $\lambda_{h  }$ be an eigenvalue of (\ref{2.10}) converging to  the eigenvalue $\lambda$ of (\ref{2.9}) and $M(\lambda)\subset \{\bm v: A_K(\bm v)<\infty,\forall K\in\mathcal T_h\}$.  When $\{\mathcal T_{h}\}$ is a family of nonconforming meshes
\begin{eqnarray}\label{4.19}
|\lambda-{\lambda}_{h  }| \lesssim h^{2\min(s,p+1)-4}p^{7-2s},\\
\|\bm u-\bm u_{h  }\|_h \lesssim h^{\min(s,p+1)-2}p^{3.5-s};\label{l5}
\end{eqnarray}
when $\{\mathcal T_{h}\}$ is a family of conforming meshes
\begin{align}\label{4.19s}
|\lambda-{\lambda}_{h  }| &\lesssim h^{2\min(s,p+1)-4}p^{6-2s},\\
\|\bm u-\bm u_{h  }\| &\lesssim \varepsilon(h,p)h^{\min(s,p+1)-2}p^{3-s},\label{l3}\\
\|\bm u-\bm u_{h  }\|_h &\lesssim h^{\min(s,p+1)-2}p^{3-s};\label{l4}
\end{align}
where $s=\min_{K\in\mathcal T_h}s_K$  and ${M}(\lambda)$    denotes the space spanned by all eigenfunctions  corresponding to the eigenvalue $\lambda$.
\end{theorem}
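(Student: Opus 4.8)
The plan is to apply the Babu\v{s}ka--Osborn spectral approximation theory from \cite{babuska} to the pair of solution operators $T$ and $T_h$, using that $T_h\to T$ uniformly in $\b L^2(\Omega)$ (Theorem~4.2) and that both operators are self-adjoint. By that theory, the eigenvalue error is controlled by the square of the best-approximation-type quantity $\|(T-T_h)|_{M(\lambda)}\|$, measured in the appropriate norm, while the eigenfunction error in a given norm is controlled by the first power of the same quantity plus the consistency error of the scheme. Concretely, if $\mu=1/\lambda$ is the corresponding eigenvalue of $T$ with eigenspace $M(\lambda)$, then
\begin{align*}
|\lambda-\lambda_h|&\lesssim \sup_{\bm u\in M(\lambda),\,\|\bm u\|=1}\left(\|(T-T_h)\bm u\|_h^2 + \text{(a quadratic consistency term)}\right),\\
\|\bm u-\bm u_h\|_\star&\lesssim \sup_{\bm u\in M(\lambda),\,\|\bm u\|=1}\|(T-T_h)\bm u\|_\star,
\end{align*}
for $\star\in\{\,\|\cdot\|,\ \|\cdot\|_h\,\}$, where the consistency term is absorbed because our scheme satisfies Galerkin orthogonality \eqref{2.11a} up to the jump penalties already built into $\|\cdot\|_h$.

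\textbf{Key steps.} First, I would identify the eigenfunctions in $M(\lambda)$ with solutions of the source problem: $\bm u\in M(\lambda)$ satisfies $\bm u=\lambda\,T\bm u$ with $\mathrm{div}\,\bm u=0$, so the estimates of Theorem~3.3 (nonconforming) and Theorem~3.3 together with Theorem~3.4 (conforming) for $\|\bm u-\bm u_h\|_h$, and of Theorem~4.1 for $\|T\bm g-T_h\bm g\|$ with $\bm g=\lambda\bm u$ divergence-free, apply verbatim with $A_\Omega(T\bm u)=A_\Omega(\lambda^{-1}\bm u)\lesssim A_\Omega(\bm u)<\infty$ by the hypothesis $M(\lambda)\subset\{\bm v:A_K(\bm v)<\infty\}$. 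Second, for the eigenfunction bounds \eqref{l5} and \eqref{l4} in the DG norm, I would invoke Theorem~3.3 (resp.\ Theorem~3.4) directly, giving $h^{\min(s,p+1)-2}p^{3.5-s}$ on nonconforming meshes and $h^{\min(s,p+1)-2}p^{3-s}$ on conforming meshes; for \eqref{l3}, the $\b L^2$ eigenfunction bound on conforming meshes, I would use Theorem~4.1, which contributes the extra factor $\varepsilon(h,p)$. Third, for the eigenvalue bounds, the Babu\v{s}ka--Osborn identity for self-adjoint problems yields $|\lambda-\lambda_h|\lesssim \|(T-T_h)\bm u\|_h^2$ (up to higher-order terms), so squaring the DG-norm eigenfunction rate gives $h^{2\min(s,p+1)-4}p^{7-2s}$ in the nonconforming case \eqref{4.19} and $h^{2\min(s,p+1)-4}p^{6-2s}$ in the conforming case \eqref{4.19s}; I would also double-check that the consistency/nonconformity term $a_h(\bm u-\bm u_h,\bm v_h)$ and the cross term $((T-T_h)\bm u,(T-T_h)\bm u)$ are of the same or higher order, using Galerkin orthogonality \eqref{2.11a} and the boundedness of $a_h$ on the augmented space.

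\textbf{Main obstacle.} The delicate point is the precise form of the eigenvalue error identity in this nonconforming DG setting: the classical Osborn formula $\lambda-\lambda_h = \langle(T-T_h)\bm u,\bm u\rangle/\langle\bm u,\bm u\rangle + \text{h.o.t.}$ must be re-derived (or cited in a form valid for the perturbed bilinear form $\widetilde a_h$), accounting for the fact that $\widetilde a_h$ is defined on the broken space and is only coercive on $\b X_h$ (Lemma~4.4), not on all of $\b V_h$. I would handle this by restricting to the discrete divergence-free subspace $\b X_h$, where $\widetilde a_h(\bm v,\bm v)\gtrsim|\bm v|_h^2\gtrsim\|\bm v\|_h^2$ holds by \eqref{2.20} and the discrete Poincar\'e inequality \eqref{2.22}, so that the reduced eigenvalue problem on $\b X_h$ fits the standard symmetric framework; the potential-field components play no role since $S\bm f=S_h\bm f=0$ for divergence-free data. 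Once the identity is in place, the stated rates follow by substituting the source-problem estimates established above, and matching exponents in $h$ and $p$ is then a bookkeeping exercise.
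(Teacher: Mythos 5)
Your overall strategy coincides with the paper's: Babu\v{s}ka--Osborn theory applied to the self-adjoint pair $T,T_h$ (using the uniform convergence of Theorem~4.2), the double-sum term $\sum_{i,j}|((T-T_h)\bm\varphi_i,\bm\varphi_j)|$ converted via consistency and Galerkin orthogonality into $\wt a_h((T-T_h)\bm\varphi_i,(T-T_h)\bm\varphi_j)\lesssim\|(T-T_h)\bm\varphi_i\|_h\|(T-T_h)\bm\varphi_j\|_h$, squaring the source-problem DG rates from Theorems~3.2/3.3 to get \eqref{4.19} and \eqref{4.19s}, and Theorem~4.1 (with $S\bm f=S_h\bm f=0$ for divergence-free data) for the $\b L^2$ bound \eqref{l3}. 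That part of your plan is sound and is exactly what the paper does.

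There is, however, a genuine gap in your treatment of the DG-norm eigenfunction bounds \eqref{l5} and \eqref{l4}. You propose to ``invoke Theorem~3.3 (resp.\ Theorem~3.4) directly,'' and more generally you assert $\|\bm u-\bm u_h\|_h\lesssim\sup_{\bm u\in M(\lambda)}\|(T-T_h)\bm u\|_h$ as if it were part of the Babu\v{s}ka--Osborn package. Neither step is available: the discrete eigenfunction $\bm u_h$ is \emph{not} the IPDG solution of the source problem with any known right-hand side (it solves \eqref{2.9} with the unknown data $\lambda_h\bm u_h$), so the quasi-optimality estimates of Theorems~3.2/3.3 do not apply to it, and the Babu\v{s}ka--Osborn eigenfunction estimate is only delivered in the norm of the space on which $T$ acts, here $\b L^2(\Omega)$, which is how \eqref{l2} is stated. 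The missing ingredient is the bootstrap through the operator identities $\bm u_h=\lambda_hT_h\bm u_h$ and $\bm u=\lambda T\bm u$:
\begin{align*}
\|\bm u_h-\bm u\|_h\le\|\lambda_hT_h\bm u_h-\lambda T_h\bm u\|_h+\lambda\|(T_h-T)\bm u\|_h
\lesssim|\lambda_h-\lambda|+\|\bm u_h-\bm u\|+\inf_{\bm v\in\b V_h}\|T\bm u-\bm v\|_h,
\end{align*}
where the first term uses the $\b L^2\to\|\cdot\|_h$ boundedness of $T_h$ and the last uses Corollary~4.1. Only after inserting the already-proved eigenvalue bound, the $\b L^2$ eigenfunction bound, and the interpolation estimates of Theorem~3.2 (nonconforming) or Theorem~3.3 (conforming) do you obtain the rates $p^{3.5-s}$ and $p^{3-s}$ claimed in \eqref{l5} and \eqref{l4}. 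Without this step your argument for those two estimates does not close.
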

\begin{proof} Let $\lambda_{h  }$ and $\lambda$ be the $mth$ eigenvalues of (\ref{2.10}) and (\ref{2.9}), respectively, and $\dim M(\lambda)=q$.
From   Theorem 7.2  (inequality (7.12)), Theorem 7.3 and Theorem 7.4
in \cite{babuska} we get
\begin{eqnarray}
&&|\lambda-{\lambda}_{h  }| \lesssim \sum\limits_{i,j=m}^{m+q-1}| ((T-T_{h  })\bm\varphi_{i},\bm\varphi_{j})|+\|(T-T_{h  }) |_{M(\lambda)} \|_{\b L^2(\Omega)\to \b L^2(\Omega)}^2,\label{l1}\\
&&\|\bm u_{h}-\bm u\|_{}\lesssim\|(T-T_{h  })\mid_{M(\lambda)}\|_{\b L^2(\Omega)\to \b L^2(\Omega)}^{},\label{l2}
\end{eqnarray}
where $\bm \varphi_{m},\cdots,\bm \varphi_{m+q-1}$ are a a set of  basis functions for $M(\lambda)$.
Note that  $S\bm f=S_h\bm f=0$ in (\ref{p2}) and (\ref{p3}) for $\bm f\in \b X$. According to   Theorem 4.1, the estimate \eqref{l3} is obtained from \eqref{l2}.
 Hence the following Garlerkin orthogonality holds:
 \begin{align*}
  ((T-T_{h  })\bm\varphi_{i},\bm \varphi_{j})=\wt a_h((T-T_{h  })\bm\varphi_{i},T\bm \varphi_{j})
  =\wt a_h((T-T_{h  })\bm\varphi_{i},(T-T_{h  })\bm \varphi_{j}).
 \end{align*}
Substituting it into \eqref{l1} , we deduce (\ref{4.19}) and  (\ref{4.19s}) from Theorems 3.2 and 3.3.
Note that $\bm u_{h}=\lambda_h T_{h}\bm u_{h}$ and $ \bm u=\lambda T \bm u$. By the boundedness of $T_{h}$ and Corollary 4.1 we derive
\begin{align*}
\|\bm u_{h}-\bm u\|_{h}&=\|\lambda_h T_{h}\bm u_{h}-\lambda T \bm u\|_{h}\nonumber\\
&\leq \| \lambda_h T_{h}\bm u_{h}-\lambda  T_{h}
\bm u\|_{h}+\| \lambda T_{h} \bm u- \lambda T
\bm u \|_{h}
\nonumber\\
&\lesssim \|\lambda_h\bm u_{h}-\lambda
\bm u\|_{} + \lambda\| T_{h} \bm u- T
\bm u \|_{h}\nonumber\\
&\lesssim |\lambda_h-\lambda|+
\|\bm u-\bm u_{h}\| + \inf_{\bm v\in\b V_h}\| T \bm u-
\bm v \|_{h}
\end{align*}
which together with \eqref{l3} and Theorem 3.3(or Theorem 3.2) yields the estimate \eqref{l4} (or \eqref{l5}).
\end{proof}

\section{Numerical experiment}

\indent In this section, the $h$-refinement on the mesh size and the $p$-refinement on {the polynomial degree will be adopted in the IPDG discretization of} the  quad-curl eigenvalue problem.
{Here we} use the data structure of FE mesh in  the package of iFEM \cite{chen} in the environment of MATLAB.

{We} consider the  eigenvalue problem   on
the  square $\Omega_S:=(-1,1)^2$, 
   the  L-shape {domain} $\Omega_L:=(-1,1)^2$ $\backslash \{(-1,0]\times [0,1)\}$, and the cube $(-1,1)^3$.


First {we solve} the eigenvalue problem {using $P_2$} polynomial space on triangular meshes. Meanwhile  the eigenvalue problem  is also computed on quadrilateral meshes {for}  comparative {purpose}.
The choice of $\eta_1$ and $\eta_2$ is not very sensitive to the computational accuracy on the triangular meshes.
{Let $\mathcal T_h$ be} a quadrilateral mesh{, 
and set $\eta_1=2.5$, $\eta_2=1.6$.} 
Then we compute the quad-curl eigenvalues on $\Omega_L$ and $\Omega_S$ using quadrilateral meshes (see the top of Figure 1 for the coarsest mesh). The same $\eta_1$ and  $\eta_2$ are used in the computation on the uniform triangular meshes.
The computational {results are listed in Tables 1-4}.
We  compute the convergence {rates}  for numerical eigenvalues using  the approximate formula
$\log(\frac{| \lambda_{h_i} -\lambda|}{| \lambda_{h_j} -\lambda|})/\log({h_i}/{h_j})$.
   With the exact eigenvalues unknown, we use {the numerical eigenvalues computed  by the curl-curl-conforming elements} in \cite{wang} on the finest  mesh as the reference values.

{From Tables 1-4, we see that the asymptotic convergence rates of $\lambda_{1,h}$, $\lambda_{2,h}$ on $\Omega_S$ and $\lambda_{2,h}$ on $\Omega_L$ are about 2}. However the convergence rate of $\lambda_{1,h}$ on $\Omega_L$ using triangular mesh is less than {2 while using quadrilateral meshes is about 2.
On the other hand, we observe from Tables 1-4 that numerical eigenvalues obtained from triangular meshes have higher accuracy than those obtained from} quadrilateral meshes.

{In addition, we perform} some numerical experiments on {non-uniform} triangular meshes with hanging nodes (see the bottom of Figure 1). We can use 7176 degrees of freedom (DOF)  to obtain approximation eigenvalues $\lambda_{1,h}\sim\lambda_{5,h}$ on the square {up to 2-3 digits}:   7.0E+02, 7.07E+02, 2.3E+03, 4.2E+03, 5.0E+03, and use 3816 DOFs  to obtain approximation eigenvalues $\lambda_{1,h}\sim\lambda_{5,h}$ on the  L-shape {domain up to 2 digits}: 33,
98,
3.8E+02,
4.0E+02,
6.8E+02. This indicates the robustness of the IPDG method for solving the quad-curl eigenvalues on nonconforming triangular meshes.

Next {we solve for the lowest 5 eigenvalues using $P_3$ polynomial space}. The associated numerical eigenvalues and their error curves are shown in Table 5 and the left side of Figure 2{, respectively. We see that} the convergence rate of this case is around $O(h^4)$ for all computed eigenvalues.

As {the last} numerical example in 2D, {we apply the IPDG  method to solve the eigenvalue problem} on the square using different polynomial degrees {with  fixed mesh size}. The right hand side of Figure 2 plots the error {curves in the semi-log chart with fixed  $h=1/8$ and  polynomial degrees $p$ ranging from 2 to 7}. It can be seen that the errors of the computed DG eigenvalues have a linear trend with respect to the local polynomial degrees in {the semi-log scale, which indicates exponential rate of convergence. Numerically,
it is}  $exp(-rp)$ with $r=2.5\sim3$.

{Finally, we show} a numerical example on the three dimensional cube $[-1,1]^3$ {with polynomial} degrees $p$ ranging from 5 to 9{. We divide the cube into six uniform tetrahedra and set $\eta_1=15.6$,} $\eta_2=1.35$. The computed  lowest five DG eigenvalues are listed in Table 6  and the associated error curves are plotted in Figure 3.  {Numerically,  the convergence rates} of the lowest five DG eigenvalues are $exp(-rp)$ with $r=0.10\sim0.15$.

\indent{\bf Acknowledgment.}
This work was supported {in  part} by the National Natural Science Foundation of China Grants{: 11871092, 12001130, 12131005, and NSAF 1930402},
China Postdoctoral Science Foundation no. 2020M680316, and Science and Technology Foundation of Guizhou Province
no. ZK[2021]012. 

We thank Baijun Zhang in CSRC for his extensive discussions on this topic.

\begin{figure}
  \centering
  \includegraphics[width=2.9in]{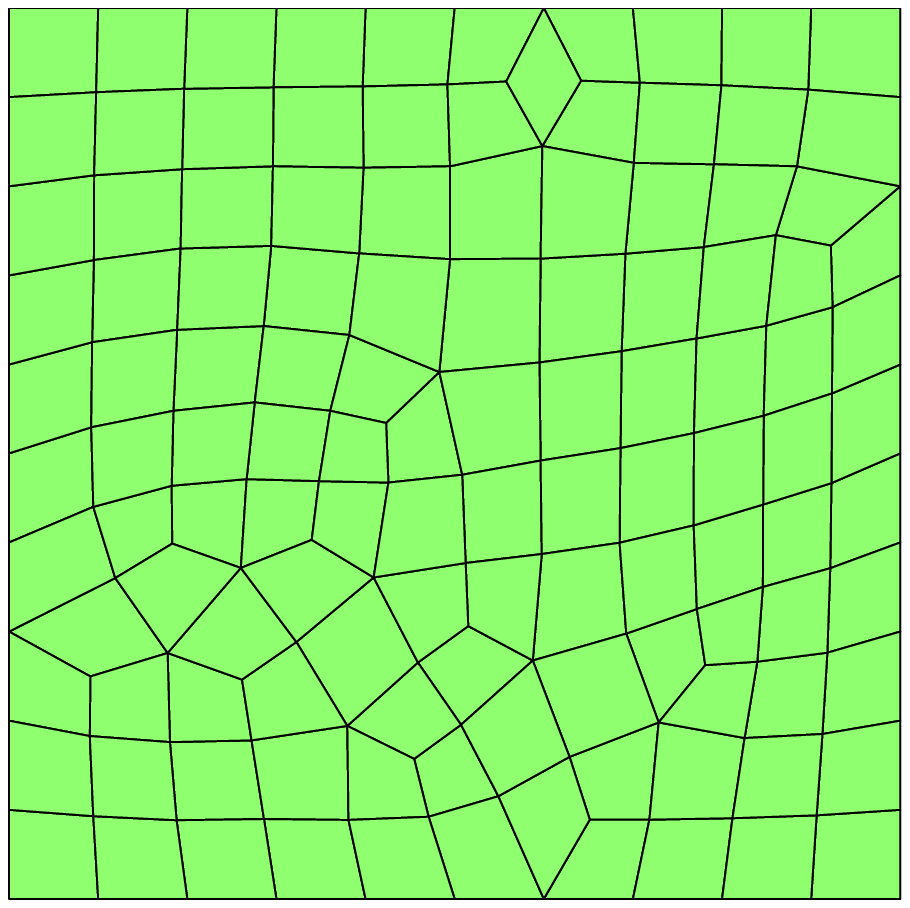}
  \includegraphics[width=2.9in]{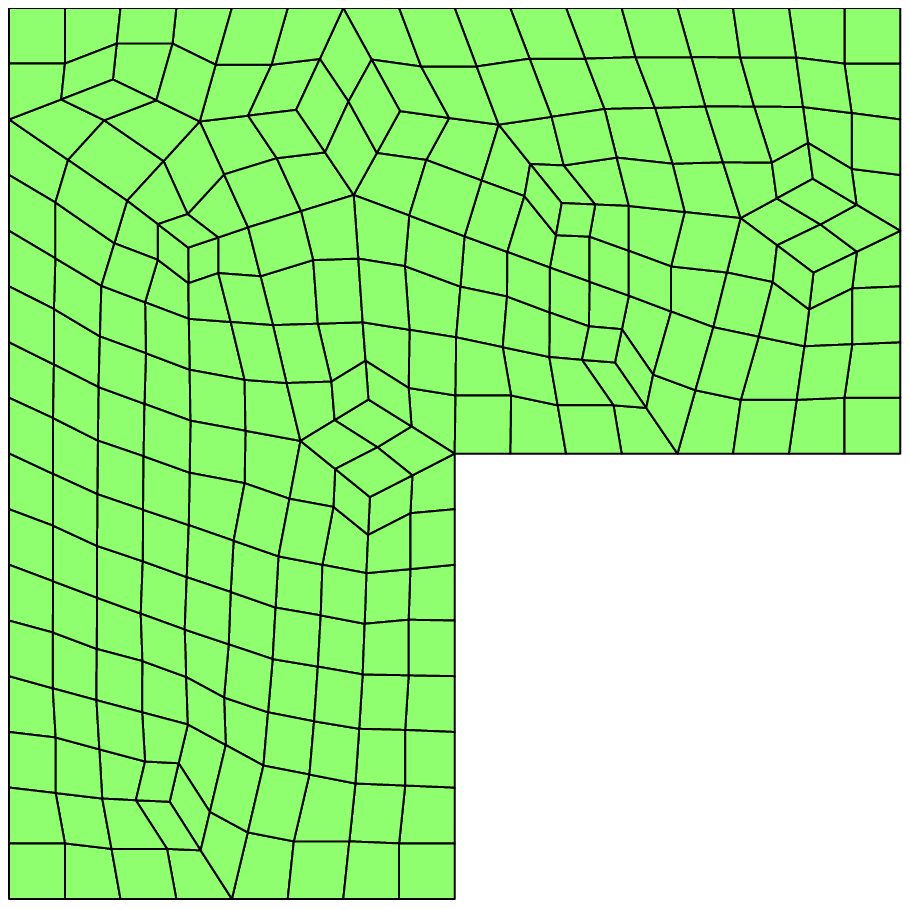}
 \includegraphics[width=2.9in]{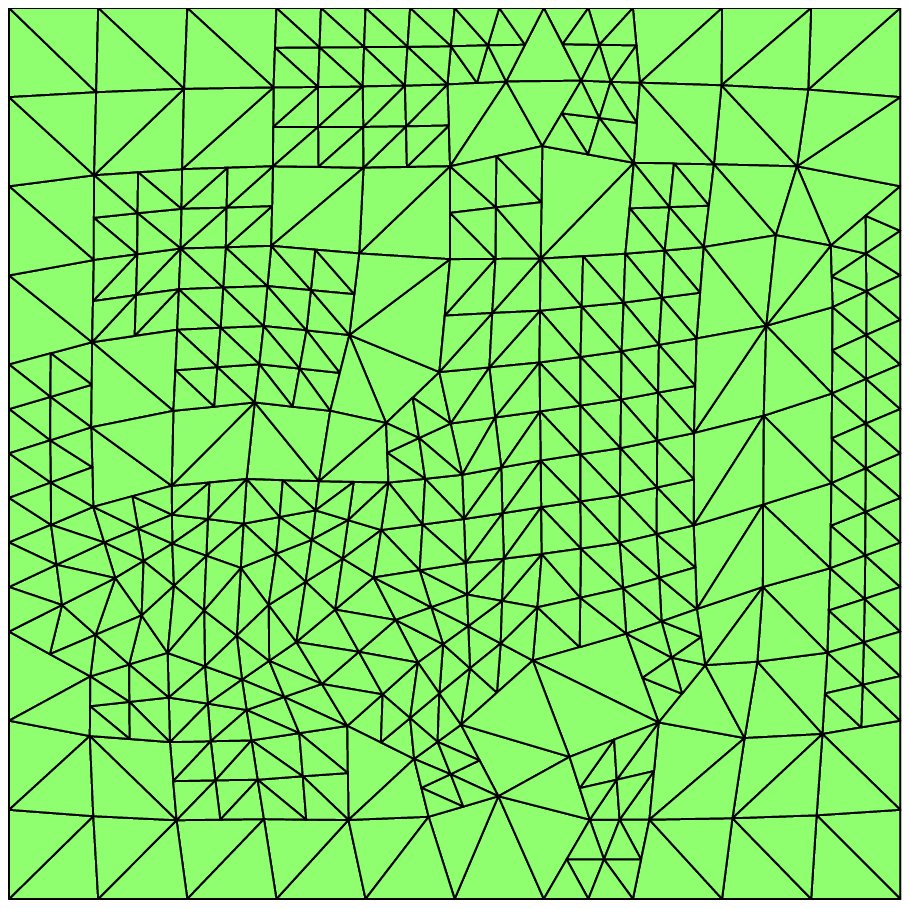}
  \includegraphics[width=2.9in]{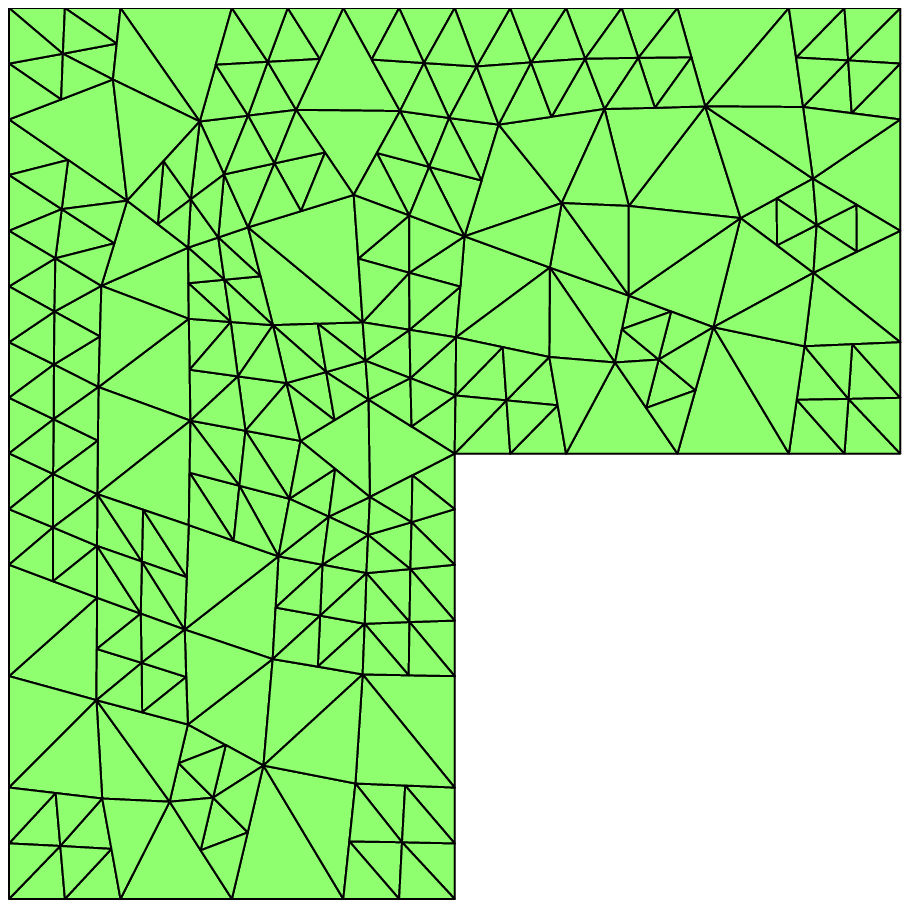}
  \caption{The quadrilateral meshes  on the square (left top, 1428 DOFs) and on the L-shape {domain}(right top, 3024 DOFs); 
   the triangular meshes with hanging nodes on the square (left bottom, 7176 DOFs, $\lambda_{1,h}\sim\lambda_{5,h}$:   7.0E+02, 7.07E+02, 2.3E+03, 4.2E+03, 5.0E+03) {and} on the L-shape {domain}(right bottom, 3816 DOFs, $\lambda_{1,h}\sim\lambda_{5,h}$: 33,
98,
3.8E+02,
4.0E+02,
6.8E+02)
}
\end{figure}


\begin{table}
\caption{Numerical eigenvalues on
 the square using quadrilateral meshes}
\begin{center} \footnotesize
\begin{tabular}{ccccccccccccccc}\hline
$h$&1/10&1/20&1/40&1/80&1/160\\\hline
$\lambda_{1,h}$& 762.9&	726.7&	713.4&	709.4&	708.3\\
$\lambda_{2,h}$&776.8&	730.7&	714.5&	709.7&	708.4\\
$\lambda_{3,h}$&2705.9&	2478.4&	2387.6&	2360.1&	2352.6\\
$\lambda_{4,h}$&3060.8&	4425.8&	4306.8&	4269.7&	4259.5\\
$\lambda_{5,h}$&4713.8&	5221&	5081&	5039.3&	5027.9\\\hline\hline
$|\lambda_{1,h}-\lambda_1|/\lambda_1$& 0.0776&    0.0265&    0.0077&    0.0020&    0.0005\\
Rate&&&& 1.9264&    2.1222\\
$|\lambda_{2,h}-\lambda_2|/\lambda_2$&0.0972&    0.0321&    0.0092&    0.0024&    0.0006\\
Rate&&&&1.9175&    2.0134\\
$|\lambda_{3,h}-\lambda_3|/\lambda_3$& 0.1515&    0.0546&    0.0160&    0.0043&    0.0011\\
Rate&&&&1.8949 &   1.9520\\
$|\lambda_{4,h}-\lambda_4|/\lambda_4$& 0.2808&    0.0399&    0.0120&    0.0033&    0.0009\\
 Rate&&&&1.8765&    1.9135\\
$|\lambda_{5,h}-\lambda_5|/\lambda_5$&0.0617&   0.0392&    0.0113&    0.0030&    0.0008\\
Rate&&&&1.8969&    1.9698\\
 \hline
\end{tabular}
\end{center}
\end{table}

\begin{table}
\caption{Numerical eigenvalues on
 the L-shape  using quadrilateral meshes}
\begin{center} \footnotesize
\begin{tabular}{ccccccccccccccc}\hline
$h$& 1/8&	1/16&	1/32&	1/64&	1/128\\\hline
$\lambda_{1,h}$&35.3209&	34.0426&	33.6188&	33.4995&	33.4685\\
$\lambda_{2,h}$&58.0695&	100.9415&	99.1287&	98.6054&	98.4652\\
$\lambda_{3,h}$&72.6584&	392.0341&	384.0611&	381.7741&	381.1653\\
$\lambda_{4,h}$&106.2517&	412.9257&	402.516&	399.245&	398.2771\\
$\lambda_{5,h}$&440.7255&	501.4575&	688.5538&	683.8663&	682.5761\\\hline\hline
$|\lambda_{1,h}-\lambda_1|/\lambda_1$& 0.0557&    0.0175&    0.0048&    0.0013&    0.0003\\
 Rate&&&&1.9386 &   1.9329\\
$|\lambda_{2,h}-\lambda_2|/\lambda_2$& 0.4098&    0.0256&    0.0072&    0.0019&    0.0005\\
Rate&&&&1.9135   & 1.9475\\
$|\lambda_{3,h}-\lambda_3|/\lambda_3$&0.8092&    0.0291&    0.0082&    0.0021&    0.0006\\
Rate&&&&1.9231    &1.9635\\
$|\lambda_{4,h}-\lambda_4|/\lambda_4$&0.7327&    0.0377&    0.0116&    0.0033&    0.0009\\
Rate&&&&1.7890    &1.8716\\
$|\lambda_{5,h}-\lambda_5|/\lambda_5$& 0.3538&    0.2648&    0.0094&    0.0026&    0.0007\\
Rate&&&&1.8817    &1.9369\\
 \hline
\end{tabular}
\end{center}
\end{table}

\begin{table}
\caption{Numerical eigenvalues on
 the square using uniform triangular meshes}
\begin{center} \footnotesize
\begin{tabular}{ccccccccccccccc}\hline
$h$&1/8&1/16&1/32&1/64&1/128\\\hline
$\lambda_{1,h}$&697.66& 	707.89& 	708.32& 	708.11& 	708.01\\
$\lambda_{2,h}$&703.97& 	709.47& 	708.67& 	708.19& 	708.03\\
$\lambda_{3,h}$&2294.94& 	2354.17& 	2353.56& 	2351.21& 	2350.34\\
$\lambda_{4,h}$&4112.82& 	4251.31& 	4259.09& 	4257.22& 	4256.24\\
$\lambda_{5,h}$&4912.41& 	5027.41& 	5028.64& 	5025.61& 	5024.45\\\hline\hline
$|\lambda_{1,h}-\lambda_1|/\lambda_1$&1.46e-02&   1.12e-04&   4.92e-04&   1.95e-04& 5.79e-05\\
Rate&&&&1.3344& 1.7510\\
$|\lambda_{2,h}-\lambda_2|/\lambda_2$&5.66e-03&   2.12e-03&   9.84e-04&   3.04e-04&8.33e-05\\
Rate&&&&1.6968&    1.8655\\
$|\lambda_{3,h}-\lambda_3|/\lambda_3$& 2.34e-02&   1.78e-03&   1.52e-03&   5.22e-04& 1.49e-04\\
Rate&&&& 1.5412&    1.8097\\
$|\lambda_{4,h}-\lambda_4|/\lambda_4$& 3.36e-02&   1.06e-03&   7.70e-04&   3.29e-04& 1.00e-04\\
Rate&&&&1.2244&    1.7186\\
$|\lambda_{5,h}-\lambda_5|/\lambda_5$&  2.22e-02&   6.80e-04&   9.25e-04&   3.21e-04&9.14e-05
\\
Rate&&&&1.5257&    1.8141\\
 \hline
\end{tabular}
\end{center}
\end{table}

\begin{table}
\caption{Numerical eigenvalues on
 the L-shape using uniform triangular meshes}
\begin{center} \footnotesize
\begin{tabular}{ccccccccccccccc}\hline
$h$&1/8&1/16&1/32&1/64&1/128\\\hline
$\lambda_{1,h}$&33.4608&	33.4824&	33.4664&	33.4603&	33.4589\\
$\lambda_{2,h}$&98.5124&	98.5541&	98.4667&	98.4312&	98.4204\\
$\lambda_{3,h}$&381.4047&	381.6117&	381.19&	381.0231&	380.9735\\
$\lambda_{4,h}$&396.4531&	398.4319&	398.177&	397.9822&	397.9045\\
$\lambda_{5,h}$&677.3528&	682.6124&	682.4738&	682.2339&	682.1436\\\hline\hline
$|\lambda_{1,h}-\lambda_1|/\lambda_1$&   9.86e-05&   7.44e-04&   2.66e-04&   8.37e-05&4.18e-05\\
Rate&&&&1.6684&    1.0000\\
$|\lambda_{2,h}-\lambda_2|/\lambda_2$& 9.72e-04&   1.40e-03&   5.08e-04&   1.47e-04&3.76e-05\\
Rate&&&&1.7859&    1.9705\\
$|\lambda_{3,h}-\lambda_3|/\lambda_3$&  1.18e-03&   1.72e-03&   6.16e-04&   1.78e-04&4.78e-05\\
Rate&&&&1.7915&    1.8973\\
$|\lambda_{4,h}-\lambda_4|/\lambda_4$&3.67e-03&   1.30e-03&   6.64e-04&   1.73e-04&2.26e-05\\
Rate&&&&1.9443&    2.9323\\
$|\lambda_{5,h}-\lambda_5|/\lambda_5$& 6.99e-03&   7.22e-04&   5.19e-04&   1.67e-04& 3.48e-05\\
Rate&&&&1.6343&    2.2661\\
 \hline
\end{tabular}
\end{center}
\end{table}

\begin{table}
\caption{Numerical eigenvalues on
 the square   {by different polynomial degrees $p$: uniform triangular mesh} $h=1/8$}
\begin{center} \footnotesize
\begin{tabular}{ccccccccccccccc}\hline
$p$&2&3&4&5&6&7\\\hline
$\lambda_{1,h}$&697.664507&	708.181644&	707.993329&	707.971329&	707.971765&	707.971509\\
$\lambda_{2,h}$&703.966943&	708.349258&	707.989007&	707.971929&	707.971702&	707.971551\\
$\lambda_{3,h}$&2294.939257&	2353.247385&	2350.206945&	2349.986846&	2349.987798&	2349.985790\\
$\lambda_{4,h}$&4112.822158&	4260.588288&	4256.264643&	4255.816486&	4255.817946&	4255.814058\\
$\lambda_{5,h}$&4912.406669&	5029.614780&	5024.259053&	5023.992937&	5023.992162&	5023.992341\\
 \hline
\end{tabular}
\end{center}
\end{table}

\begin{table}
\caption{Numerical eigenvalues on
 the cube {by different polynomial degrees $p$: 6 uniform tetrahedra} }
\begin{center} \footnotesize
\begin{tabular}{ccccccccccccccc}\hline
$p$&5&6&7&8&9\\\hline
$\lambda_{1,h}$&168.4810&	112.2701&	109.7608&	106.8450&	106.6736\\
$\lambda_{2,h}$&191.1329&	117.3074&	112.3088&	106.8450&	106.6834\\
$\lambda_{3,h}$&191.1329&	117.3074&	112.3088&	106.9811&	106.6737\\
$\lambda_{4,h}$&466.1273&	302.4162&	264.9300&	253.0533&	247.4130\\
$\lambda_{5,h}$&466.1273&	302.4162&	264.9300&	253.0533&	247.4129\\
 \hline
\end{tabular}
\end{center}
\end{table}

\begin{figure}[h]
  \centering
  \includegraphics[width=2.9in]{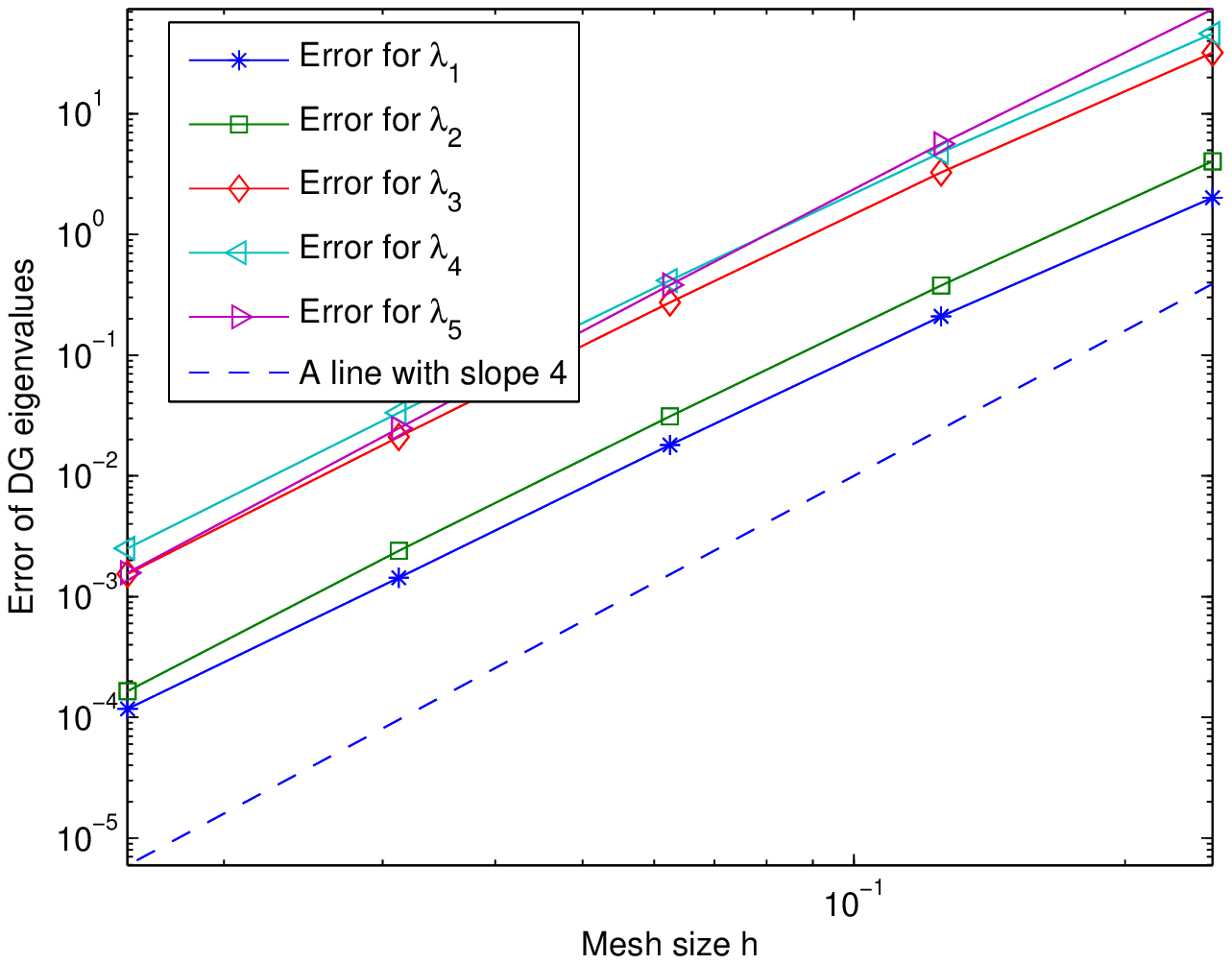}\includegraphics[width=2.9in]{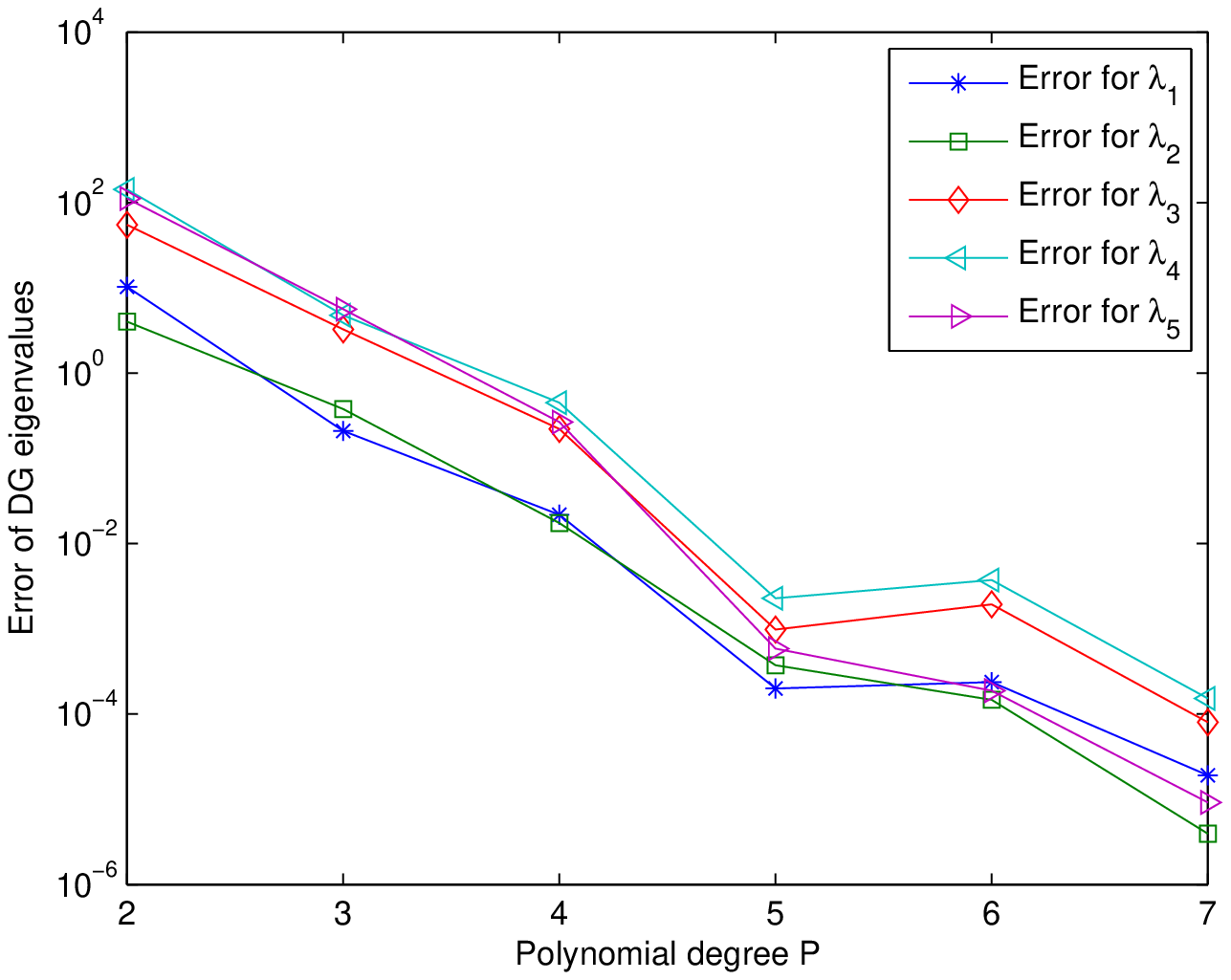}
    \caption{Error curves  on the square using {$P_3$ space with different mesh sizes (left) and different }polynomial degrees (right).}
\end{figure}
\begin{figure}
  \centering
  \includegraphics[width=2.9in]{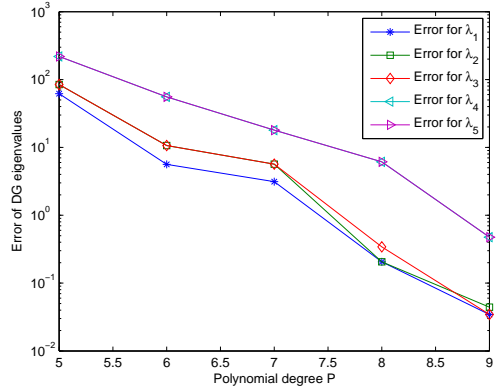}
    \caption{Error curves  on the cube using {different polynomial degrees with fixed mesh}. }
\end{figure}
\section*{Appendix A}
We introduce the following function (with the Legendre polynomial $L_i(\widehat x)$ of degree $i$)
 \begin{align}\label{a}
    \widehat\phi_i(\widehat x)=\left\{\begin{tabular}{ll}
                                      $\frac{1-\widehat{x}}{2}$,&$i=0$,\\
                                      $\frac{1+\widehat{x}}{2}$,&$i=1$,\\
                                      $\frac{L_i(\widehat x)-L_{i-2}(\widehat x)}{\sqrt{2(2i-1)}}$,&$i\ge2$.\\
                                   \end{tabular}\right.
 \end{align}
The basis functions of $\b H(\mathrm{curl})$-conforming rectangular element in $Q^{p-1,p}([-1,1]^2)\times Q^{p,p-1}([-1,1]^2)$ is as follows. 
\begin{description}
  \item[A.] Cell-based basis functions:
  \begin{description}
  \item[I.] $\bm\Phi^{-}_{i,j}:=\nabla\widehat \phi_i(\widehat x_1)\widehat \phi_j(\widehat x_2)-  \widehat   \phi_i(\widehat x_1)\nabla\widehat \phi_j(\widehat x_2)$ for $2\le i,j\le p$. \quad\textbf{($(p-1)^2$ in total)}
  \item[II.] $\bm\Phi^{+}_{i,j}:=\nabla\widehat \phi_i(\widehat x_1)\widehat \phi_j(\widehat x_2)+  \widehat   \phi_i(\widehat x_1)\nabla\widehat \phi_j(\widehat x_2)$ for $2\le i,j\le p$. \quad\textbf{($(p-1)^2$ in total)}
  \item[III.] $\bm\Phi_{1,j}:=\widehat \phi_j(\widehat x_2)\bm e_1$ and $\bm\Phi_{i,1}:=\widehat \phi_i(\widehat x_1)\bm e_2$ for $2\le i,j\le p$. \quad\textbf{($2(p-1)$ in total)}
  \end{description}
  \item[B.] Edge-based basis functions
  \begin{description}
  \item[I.] $\bm\Phi^+_{i,j}:=\nabla \widehat \phi_i(\widehat x_1)\widehat \phi_j(\widehat x_2)+ \widehat \phi_i(\widehat x_1)\nabla\widehat \phi_j(\widehat x_2)$ for $i=0,1$ and $2\le j\le p$.\quad\textbf{($2(p-1)$ in total)}
  \item[II.] $\bm\Phi^+_{i,j}:=\nabla \widehat \phi_i(\widehat x_1)\widehat \phi_j(\widehat x_2)+  \widehat \phi_i(\widehat x_1)\nabla\widehat \phi_j(\widehat x_2)$ for $j=0,1$ and $2\le i\le p$.\quad \textbf{($2(p-1)$ in total)}
  \item[III.] $\bm\Phi_{-1,j}:=\widehat \phi_j(\widehat x_2)\bm e_1$ and $\bm\Phi_{i,-1}:=\widehat \phi_i(\widehat x_1)\bm e_2$ for $i,j=0,1$.\quad\textbf{($4$ in total)}
  \end{description}
\end{description}
 The above basis functions are orthogonal in the sense of
$(\mathrm{curl}\cdot,\mathrm{curl}\cdot)_{}$ and $<\gamma\cdot,\gamma\cdot>$ where $\gamma \bm v:=\widehat{\bm v}\cdot \tau$ is the tangential trace of $\bm v$. 
Let $\widehat{\bm v}\in \nabla (Q^{1,p}+Q^{p,1})$.
Denote  $\Pi(\widehat{\bm v}):=(\Pi_1(\widehat{\bm v}),\Pi_2(\widehat{\bm v}))^T$ with
\begin{align}\label{5.2}
\begin{aligned}
\Pi_1(\widehat{\bm v})=\frac{1}{2}\int_{-1}^1\widehat{v}_1(\widehat x_1,-1)d\widehat x_1\phi_0(\widehat x_2)+\frac{1}{2}\int_{-1}^1\widehat{v}_1(\widehat x_1,1))d\widehat x_1\phi_1(\widehat x_2),\\
\Pi_2(\widehat{\bm v})=\frac{1}{2}\int_{-1}^1\widehat{v}_2(-1,\widehat x_2)d\widehat x_2\phi_0(\widehat x_1)+\frac{1}{2}\int_{-1}^1\widehat{v}_2(1,\widehat x_2))d\widehat x_2\phi_1(\widehat x_1).
\end{aligned}
\end{align}
Then $\widehat{\bm v}:=\bm r+\bm t+\Pi(\widehat{\bm v})$ with $\bm r= \Sigma_{i=0}^1\Sigma_{j=2}^p C^+_{i,j}\bm\Phi^+_{i,j}$ and $\bm t= \Sigma_{j=0}^1\Sigma_{i=2}^p C^+_{i,j}\bm\Phi^+_{i,j}$.
One can verify that
\begin{align}\label{5.3}
\begin{aligned}
\int_{[-1,1]^2}( r_1^2(1-\widehat x_2^2)^{-1}+r_2^2)d\bm x\lesssim \Sigma_{i=0}^1\Sigma_{j=2}^p (C^+_{i,j})^2((j^2-j)^{-1}+1),\\
\int_{[-1,1]^2} (t_2^2(1-\widehat x_1^2)^{-1}+t_1^2)d\bm x\lesssim \Sigma_{j=0}^1\Sigma_{i=2}^p (C^+_{i,j})^2((i^2-i)^{-1}+1).
\end{aligned}
\end{align}

\end{document}